\numberwithin{equation}{subsection}
\numberwithin{figure}{subsection}
\newtheorem{theorem}{Theorem}[section]
\newtheorem*{theorem*}{Theorem}
\newtheorem{lemma}[theorem]{Lemma}
\newtheorem{proposition}[theorem]{Proposition}
\newtheorem{corollary}[theorem]{Corollary}
\newtheorem*{corollary*}{Corollary}
\theoremstyle{definition}
\newtheorem{definition}[theorem]{Definition}
\newtheorem{notation}[theorem]{Notation}
\newtheorem{remark}[theorem]{Remark}
\newtheorem{example}[theorem]{Example}
\newtheorem*{question*}{Question}
\newtheorem*{steps*}{Answer/steps}
\newtheorem*{progress*}{Progress}
\newtheorem*{example*}{Example}
\newtheorem*{remark*}{Remark}
\newtheorem*{remarks*}{Remarks}
\newtheorem*{definition*}{Definition}
\newtheorem{conj}[theorem]{Conjecture}
\newtheorem*{conj*}{Conjecture}
\renewcommand{\tilde}{\widetilde}
\newcommand{\QQ}{\mathbb{Q}}
\newcommand{\ZZ}{\mathbb{Z}}
\newcommand{\PP}{\mathbb{P}}
\newcommand{\cR}{\mathcal{R}}
\newcommand{\into}{\hookrightarrow}
\newcommand{\ga}{G^{\text{arith}}}
\newcommand\restr[1]{\raisebox{-.5ex}{$|$}_{#1}}
\DeclareMathOperator{\Res}{Res}
\DeclareMathOperator{\Aut}{Aut}
\DeclareMathOperator{\Par}{Par}
\DeclareMathOperator{\Gal}{Gal}
\DeclareMathOperator{\sgn}{sgn}
\DeclareMathOperator{\im}{im}
\DeclareMathOperator{\id}{id}
\DeclareMathOperator{\ord}{or d}
\DeclarePairedDelimiter\floor{\lfloor}{\rfloor}
\newcommand{\ilim}{\mathop{\varprojlim}\limits}
\begin{document}
	
\title{iterated monodromy group of a PCF Quadratic non-polynomial Map}

\author{\"{O}zlem Ejder}
%    Address of record for the research reported here
\address{Bah\c{c}e\c{s}ehir University, Faculty of Enginering, Y{\i}ld{\i}z, Be\c{s}iktas\c{},34349}
%    Current address
%\curraddr{Department of Mathematics and Statistics,
%Case Western Reserve University, Cleveland, Ohio 43403}
\email{ozlemejderff@gmail.com}
%    \thanks will become a 1st page footnote.
\thanks{The first-named author was supported by part by T\"{u}bitak and Marie Sk{\l}odowska-Curie actions grant 120C071.}

%    Information for second author
\author{Yasemin Kara}
\address{Bogazici University, Faculty of Arts and Sciences, Mathematics Department, Bebek, Istanbul, 34342}
\email{yasemin.kara@boun.edu.tr}

\author{Ekin Ozman}
\address{Bogazici University, Faculty of Arts and Sciences, Mathematics Department, Bebek, Istanbul, 34342}
\email{ekin.ozman@boun.edu.tr}
%\thanks{The third author is partly supported by ??}
%    General info
\subjclass[2020]{11G32, 12F10, 37P05, 37P15}

\date{}

%\dedicatory{This paper is dedicated to our advisors.}

\keywords{Belyi map, arboreal Galois group, dynamical sequence.}

\begin{abstract}
	
We study the postcritically finite non-polynomial map $f(x)=\frac{1}{(x-1)^2}$ over a number field $k$ and prove various results about the geometric $G^{\text{geom}}(f)$ and arithmetic $G^{\text{arith}}(f)$ iterated monodromy groups of $f$.  We show that the elements of $G^{\text{geom}}(f)$ are the ones in $G^{\text{arith}}(f)$   that are fixing the roots of unity by assuming a conjecture on the size of $G^{\text{geom}}_n(f)$.  Furthermore, we describe exactly for which $a \in k$ the Arboreal Galois group $G_a(f)$ and $G^{\text{arith}}(f)$ are equal.

\end{abstract}

\date{}
\maketitle

\section{Introduction}
Let $f$ be a rational map of degree $d$ defined over a number field $k$. For certain rational maps, the iterates, $f^n$, of these maps have been heavily studied for both arithmetic and dynamical interests. For instance, explicit descriptions of  geometric and arithmetic iterated monodromy groups ($G^{\text{geom}}(f)$  and $G^{\text{arith}}(f)$   respectively) and its specializations ($G_a(f), a \in k$) have been given for certain post critically finite (PCF) polynomials, but no non-polynomials have been analyzed until now.

These special groups naturally appear as subgroups of the automorphism group of a certain binary rooted tree $T$, $\Aut(T)$, and it is a natural question to describe their actions on each finite level of $T$. These problems hold great importance in arithmetic applications and have captured considerable attention. A rapidly expanding body of literature is dedicated to investigating specific cases. However, existing work on these questions focuses largely on the case of PCF polynomials: \cite{Pinkpolyn}, \cite{arithmeticbasilica}, \cite{BostonJones07}, \cite{BostonJones09},\cite{FP20}, \cite{BEK}, \cite{BelyiEjder}.  The present article is the first one to consider the case of a PCF non-polynomial quadratic rational function, namely $f(x)=1/(x-1)^2$.

In this article, we investigate the computation of geometric and arithmetic iterated monodromy groups associated with $f(x)=1/(x-1)^2$.  Subsequently, our investigation explores the  interplay between a family of Arboreal Galois groups (emerging through the specialization of the basepoint to an element of a number field) and their integration within the arithmetic  iterated monodromy groups.

The main theorem of this article describes exactly for which $a \in k$ the Arboreal Galois group $G_a(f)$ and $G^{\text{arith}}(f)$ are equal for the PCF non-polynomial map $f(x)=1/(x-1)^2$.

\begin{theorem}\label{intromain}
	Let $a \in k$ and $K_{\infty,a}:=\bigcup_{n} K_{n,a}$ where $K_{n,a}$ is the splitting field of $f^{n}(x)-a$ over $k$ for $n \geq 1$.  Let $G_a(f)$ be the inverse limit of $G_{n,a}(f):=\Gal(K_{n,a}/k)$. The following are equivalent:
	\begin{enumerate}
		
		\item The degree $| k(\sqrt{a},\sqrt{a-1},\zeta_8):k|=16$,  \label{1}
		\item $G_a(f)=G^{\text{arith}}(f)$,\label{2}
		
		\item $G_{5,a}(f)=G^{\text{arith}}_5(f)$. \label{3}
	\end{enumerate}
	
\end{theorem}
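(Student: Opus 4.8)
The plan is to prove the cycle of implications $(2)\Rightarrow(3)\Rightarrow(1)\Rightarrow(2)$, isolating the genuine content in the finite-level equivalence $(1)\Leftrightarrow(3)$ and in the stabilization step $(3)\Rightarrow(2)$. The implication $(2)\Rightarrow(3)$ is immediate: projecting the profinite equality $G_a(f)=G^{\text{arith}}(f)$ onto the action on the fifth level $T_5$ of the tree $T$ gives $G_{5,a}(f)=G^{\text{arith}}_5(f)$, since $G_{n,a}(f)$ and $G^{\text{arith}}_n(f)$ are by construction the images of $G_a(f)$ and $G^{\text{arith}}(f)$ in $\Aut(T_n)$. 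Throughout I would use the basic containment $G_{n,a}(f)\le G^{\text{arith}}_n(f)$ inside $\Aut(T_n)$, valid for every $a$ and coming from specialization of the arithmetic fundamental group; at each level equality is therefore equivalent to an equality of orders, i.e.\ to $[K_{n,a}:k]=|G^{\text{arith}}_n(f)|$.

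For $(3)\Leftrightarrow(1)$ I would work with the explicit description and order of $G^{\text{arith}}_5(f)$ obtained earlier in the paper, and first locate the three fields $k(\sqrt a)$, $k(\sqrt{a-1})$ and $k(\zeta_8)$ inside $K_{5,a}$. Solving $f(x)=a$ gives the two roots $1\pm 1/\sqrt a$, so $K_{1,a}=k(\sqrt a)$; at the next level the two level-one roots $y_\pm=1\pm 1/\sqrt a$ satisfy $y_+y_-=(a-1)/a$, so the product of the level-two radicals $\sqrt{y_+}\sqrt{y_-}$ produces $\sqrt{a-1}$, placing $k(\sqrt{a-1})$ in $K_{2,a}$. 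The cyclotomic field $k(\zeta_8)$ enters through the arithmetic Galois action on roots of unity, which accounts for the quotient $G^{\text{arith}}(f)/G^{\text{geom}}(f)$ identified in the abstract and is present at low level in the tower. The crux is to show that the compositum $k(\sqrt a,\sqrt{a-1},\zeta_8)$ is exactly the fixed field in $K_{5,a}$ of the subgroup of $G_{5,a}(f)$ corresponding to the maximal abelian (Frattini) quotient of $G^{\text{arith}}_5(f)$, and that $G_{5,a}(f)$ surjects onto $G^{\text{arith}}_5(f)$ if and only if it surjects onto this quotient. Since the maximal possible degree of the compositum is $2\cdot 2\cdot 4=16$, condition (1) says precisely that $\sqrt a$, $\sqrt{a-1}$ and $\zeta_8$ generate independent extensions, which is exactly surjectivity onto the detecting quotient; this gives $(3)\Leftrightarrow(1)$.

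The remaining implication $(3)\Rightarrow(2)$ is the stabilization statement: equality at level $5$ forces equality in the inverse limit. I would prove it by showing that the transition maps $G^{\text{arith}}_{n+1}(f)\to G^{\text{arith}}_n(f)$ are Frattini covers for all $n\ge 5$, i.e.\ their kernels lie in the Frattini subgroup $\Phi(G^{\text{arith}}_{n+1}(f))$. Granting this together with the level-$5$ equality, a Frattini argument yields $G_{n,a}(f)=G^{\text{arith}}_n(f)$ for every $n$ by induction, since a subgroup that surjects onto $G^{\text{arith}}_n(f)$ and meets every coset modulo the Frattini subgroup must be all of $G^{\text{arith}}_{n+1}(f)$. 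Passing to the inverse limit then gives $(2)$.

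I expect the Frattini-cover property underlying $(3)\Rightarrow(2)$ to be the main obstacle. It requires a precise structural description of the kernels $\ker(G^{\text{arith}}_{n+1}(f)\to G^{\text{arith}}_n(f))$ and a verification that no new independent generator of $G^{\text{arith}}(f)$ first becomes visible above level $5$; this is exactly the behaviour that the conjecture on the size of $G^{\text{geom}}_n(f)$ is designed to control, and I anticipate invoking it (or its consequences for the growth of $|G^{\text{arith}}_n(f)|$) at this point. The finite comparison in $(1)\Leftrightarrow(3)$, by contrast, is a bounded computation once the order of $G^{\text{arith}}_5(f)$ and the positions of the three generating subextensions $k(\sqrt a)$, $k(\sqrt{a-1})$, $k(\zeta_8)$ are pinned down.
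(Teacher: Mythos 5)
Your overall skeleton is workable and genuinely close in spirit to the paper's argument: both hinge on Frattini arguments and on the fact that the ``detecting'' quotient of $G^{\text{arith}}$ is $\Gal(L/K)$ for $L=K(\sqrt{t},\sqrt{t-1},\zeta_8)$, of index $16$ and already visible inside $K_5$ (Proposition~\ref{K-infty}). The decomposition differs: the paper proves (\ref{1})$\Rightarrow$(\ref{2}) in one stroke at the profinite level, using Theorem~\ref{Frattini index} ($\Phi(\ga)=\Gal(K_\infty/L)$ has index $16$) together with Proposition~\ref{prop:Frattini}, and proves (\ref{3})$\Rightarrow$(\ref{1}) by pinning down $G_5^{\text{arith}}=\langle a_1,a_2,w_1,w_2\rangle$ (a Magma normalizer computation plus $\zeta_8\in K_5$) and restricting these generators to $L_a$; you instead route everything through level $5$ and a level-by-level Frattini-cover induction for (\ref{3})$\Rightarrow$(\ref{2}). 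Your (\ref{2})$\Rightarrow$(\ref{3}) and the induction mechanics are fine, and the Frattini-cover property you want is in fact true, so your outline could be completed. (Two smaller points: for these pro-$2$ groups the Frattini quotient is the maximal \emph{elementary abelian} $2$-quotient, not the maximal abelian quotient -- the latter is strictly larger here, since the cyclotomic action already gives an infinite abelian quotient of $\ga$; and identifying $\Phi(G_5^{\text{arith}})$ with $\Gal(K_5/L)$ is not purely group-theoretic unless you first establish $G_5^{\text{arith}}=N_5$ as the paper does.)

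The genuine gap is that you propose the wrong tool for precisely the step you flag as the main obstacle. You anticipate invoking Conjecture~\ref{conj:G} (or consequences for the growth of $|G_n^{\text{arith}}|$) to show the transition maps $G^{\text{arith}}_{n+1}\to G^{\text{arith}}_n$ are Frattini covers for $n\geq 5$; that would make Theorem~\ref{main} conditional on an open conjecture, whereas the paper proves it unconditionally -- and the conjecture is not what controls this phenomenon. What controls it is ramification. Maximal subgroups of the pro-$2$ group $G^{\text{arith}}_{n+1}$ have index $2$, so a failure of the Frattini-cover property is exactly a quadratic subextension $K(\sqrt{d})$ of $K_{n+1}/K$ not contained in $K_n$. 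The discriminant formula of Proposition~\ref{discriminant}, $\Delta_m=c_m t^{a_m}(1-t)^{b_m}$ with $c_m=\pm 2^{\bullet}$, together with the specialization at $t=-1$ used in the proof of Theorem~\ref{Frattini index} to control the constant, forces every such $d$ to be of the form $\pm 2^m t^i(1-t)^j$; hence $K(\sqrt{d})\subseteq L=K(\sqrt{t},\sqrt{t-1},\zeta_8)\subseteq K_5\subseteq K_n$ for all $n\geq 5$, which is exactly the Frattini-cover statement, with no conjectural input. In short: the ``no new independent generator above level $5$'' phenomenon is governed by the restricted ramification of the tower (only above $t=0,1,\infty$ and the prime $2$), not by the conjectural order formula for $G_n^{\text{geom}}$, and without this discriminant argument your proof of (\ref{3})$\Rightarrow$(\ref{2}) does not close.
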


We note that while $f$ is a dynamical Belyi map, it is not normalized. The iterates of $f$ are similarly dynamical Belyi maps but they do not have a unique ramification point above each branch point $0,1,\infty$, hence the results of \cite{BEK} do not apply to $f$ or any iterates of $f$. In fact, no quadratic maps are covered by the results of \cite{BEK}.

When working with PCF maps in general, there are various technical challenges due to the fact that the  iterated monodromy groups are small inside $\Aut(T)$.  One can roughly divide these difficulties in three parts:
\begin{enumerate}
	\item \label{prob:conj}The conjugacy problem for generators of the geometric iterated monodromy groups: PCF nature of the map implies the geometric  iterated monodromy groups possess a finite set of generators. These generators, up to conjugacy in $\Aut(T)$, exhibit a simple recursively-defined action on $T$. However, each generator may require conjugation by a different element of $\Aut(T)$. This is a major obstacle, and one hopes that a simultaneous conjugation will suffice. Without this there is no obvious path towards understanding the geometric  iterated monodromy groups.
	
	\item \label{prob:constants} Calculating the constant field subextension: This  is analogous to finding the quotient of arithmetic iterated monodromy groups by geometric iterated monodromy groups. This often involves arithmetic questions such as locating roots of unity in the relevant subextensions and calculating discriminants.  The overall aim is to demonstrate that the quotient is manageable which usually requires showing that the geometric iterated monodromy group is large. See also Diagram~\ref{eq:exact}
	\item \label{prob:special} Analysis of specializations: Often this takes the form of a sufficient condition for a specialization to give the entire arithmetic  iterated monodromy groups. In general there is a lot of interest in showing which specializations give a finite-index subgroup of the arithmetic  iterated monodromy groups. In conjecture, this is expected to happen for almost all instances, except for a finite number of specific specializations. Naturally, this is a much harder problem.
	
	\end{enumerate}

 In this article we work with the rational map $f(x)=1/(x^2 - 1)$, and we completely resolve Problem~\ref{prob:conj} in the list above, making use of the fact that the postcritical set is remarkably small. Progress has also been achieved concerning Problem~\ref{prob:constants}, with the primary challenge lying in determining the sizes of the natural quotients of the geometric iterated monodromy groups.

 Finally, we tackle  Problem~\ref{prob:special} by giving a sufficient condition for a specialization to give the entire arithmetic  iterated monodromy groups, which is similar to the condition given in the work of \cite{arithmeticbasilica}. 
 
 Furthermore, we also think that the map $f(x)=1/(x^2 - 1)$ can provide a negative answer to a question raised by Jones and Levy (\cite[Question~2.4]{Jones-Levy}). This will be investigated in future work.

In the rest of the introduction, we give the main notions and a brief survey of the literature, outline the content of the paper and state our results other than the main theorem which was given as Theorem \ref{intromain} at the beginning of the section.

 \subsection{Definitions, Literature, and Results}\label{sec:def}
 For each $n\geq 1$, define the $n$-th iterate of $f$ by $f^n=f\circ \ldots\circ f$. Let $t$ be a point in $\PP_{\bar{k}}^1$. If $f^n(x)-t$ is separable, then there are $d^n$ points in the set $\{ x \in \PP_{\bar{k}}^1 | f^n(x)=t\}$.   
Let $P$ be the postcritical set of $f$, that is, %\ekin{Aslinda burda vermisiz postcritical setin tanimini sadece soylememisiz?}
$$P=\{f^n(x) \in \PP^1_k : x \text{ is a critical point of } f \text{ and } n\geq 1\}.$$  A regular $d$-ary tree $T$ associated to $f$ can be constructed  as follows: 
\begin{itemize}
	\item the leaves of $T$ are the points in the preimage set  $f^{-n}(x_0)$ for $n\geq 1$ where $x_0\in \PP^1_k\setminus P $, 
	\item two leaves $v,w$ are directly connected if $f(v)=w$.
\end{itemize}

The iterates $f^n$ are unbranched outside the set $P$.  This gives a representation of the \'etale fundamental group of $\PP^1_k\backslash P$ inside $\Aut(T)$.  The image of this representation is called \emph{the arithmetic iterated monodromy group} of $f$ and denoted by $G^{\text{arith}}(f)$. Similarly, a representation of the  \'etale fundamental group of $\PP^1_{\bar{k}}\backslash P$ inside $\Aut(T)$ is called the  \emph{the geometric iterated monodromy group} of $f$ and denoted by $G^{\text{geom}}(f)$.

Equivalently we can construct these groups as follows: let $t$ be a transcendental element over $k$ and $k(t)$ be the field of rational functions. Let $K_n$ be the splitting field of $f^{n}(x)-t$ over $k(t)$ for $n \geq 1$. Then the Galois groups $G_n=\Gal(K_n/k(t))$ form an inverse system and the limit of this system is the arithmetic iterated monodromy group. A similar setup where $k(t)$ is replaced by $\bar{k}(t)$ results in the geometric iterated monodromy group. Finally, we can construct Galois groups $G_{n,a}(f)$ by looking at the solutions of $f^n(x)=a$ for $a\in k$ and $G_{a}(f)$ by taking limit over $n$. The Galois groups obtained in the latter construction are called Arboreal Galois groups and the study of such groups started with the work of Odoni in the 1980s, see \cite{Odoni85,Odoni2} and \cite{Odoni88}. We view $G^{\text{geom}}(f)$ naturally as a normal subgroup of $G^{\text{arith}}(f)$. One may see the Arboreal Galois group $G_a(f)$ as the specialization of $G^{\text{arith}}(f)$ at $t=a$. As it is shown in \cite{Odoni85}, Galois groups do not grow
under specializations. In a loose sense, $G^{\text{arith}}(f)$ is the group $G_a(f)$ for a generic choice of $a$.

A map $f$ is called \emph{postcritically finite (PCF)} if the orbit of each critical point is finite. It is known that, for PCF maps, the geometric iterated monodromy group is topologically finitely generated. Characterizing for which maps $G_a$ embeds as a finite index subgroup inside $\Aut(T)$ is a main question in the field and this question can be viewed as an analogue of Serre’s open image theorem.  On the other hand, from the works of Jones\cite{Jonessurvey} and Pink\cite{Pinkpolyn}, $G_a$ has infinite index inside  $\Aut(T)$ for PCF maps.

The case where the degree of $f$ is $2$, i.e., the quadratic case was extensively studied. %See \cite{BostonJones07} and \cite{BostonJones09} for possible images of Arboreal Galois groups, \cite{FP20} for the constraints on the Arboreal Galois groups of PCF quadratic maps.
 In \cite{Pinkpolyn} and \cite{Pinkrational}, Pink shows that the arithmetic and geometric iterated monodromy groups of the quadratic PCF polynomials and quadratic morphisms with infinite postcritical set $P$ are determined only by the combinatorial data of the postcritical set $P$. Moreover, he answers Problem~\ref{prob:conj} and Problem~\ref{prob:constants} for such maps completely. Pink's work in particular shows for $g(x)=x^2-1$ (Basilica map) that the field $K_{\infty}(g)=\cup K_n(f)$ contains a $2^j$'th root of unity for any $j\geq 1$. In \cite{arithmeticbasilica}, the authors give an explicit construction of these roots of unity and an explicit description of geometric and arithmetic iterated monodromy groups. 

This article studies the map $f(x)=\frac{1}{(x-1)^2}$. The critical points of the map $f$ are $1$ and $\infty$. The postcritical set of the map $f$ consists of the points $\{0,1,\infty\}$ and it is periodic:
\[ 0 \to 1 \to \infty \to 0 \]
Similar to \cite{arithmeticbasilica}, we construct a $2^n$th root of unity for each $n\geq 1$ in $K_{\infty}$. This result also follows from a more general result of Hamblen and Jones \cite[Corollary~2.4]{hj}. In addition, we explicitly describe the finite levels where each roots of unity occurs. See Theorem~\ref{unity}(2).

%We find that (conjecturally) the geometric iterated monodromy group of $f$ and the Basilica map $g$ have equal size at each finite level, however they are not conjugate to each other. This can easily be seen at level three.%We then conjecturally prove that the constant field sub-extension of $f$ is the cyclotomic field which is conditional on our conjecture on the size of $G^{\text{geom}}_n(f)$. 

Based on our calculations, using MAGMA\cite{magma}  and the group theoretical arguments given in Section~\ref{furtherG}, we conjecture that:

\textbf{Conjecture 7.15.}
	$\log_2 | G^{\text{geom}}_{n}(f)|=2\log_2|G_{n-1}^{\text{geom}}(f)| - \floor*{\frac{n-3}{2}}$.

Pink \cite[Proposition~2.3.1]{Pinkpolyn} calculates the sizes of the finite levels of the iterated geometric monodromy group in the case of polynomial PCF maps. 
He proves this result by studying certain normal subgroups $N$ defined for any generator $a$. For the PCF polynomials with periodic postcritical set, $G^{\text{geom}}$ is isomorphic to the semidirect product $N \rtimes \langle a\rangle$. In the case of polynomials with strictly preperiodic postcritical set, there is no semidirect product but the intersection of $N$ and $\langle a\rangle$ is finite. For the non-polynomial map $f(x)=\frac{1}{(x-1)^2}$, the size of the intersection of $N$ and $\langle a \rangle$ is infinite. Because of this obstruction, it is not possible to use Pink's approach to calculate the size of $G^{\text{geom}}$. 

It may also be worth noting that in the polynomial case (see \cite[Section~2.7 and 3.9]{Pinkpolyn}), there is an abundance of odometers in $G^{\text{geom}}$, however for the map considered in this article, there are no odometers, at all. (An easy calculation shows there is no element of order $8$ at level $3$.)
The lack of odometers may also explain the difficulty in understanding this group.

Based on Conjecture \ref{conj:G} , we have the following result (Theorem~\ref{thm:cyc}).
\begin{theorem}
	Assuming Conjecture \ref{conj:G}, the quotient group $\ga(f)/G^{\text{geom}}(f)$ is isomorphic to $\ZZ_2^*$ and the homomorphism $\Gal(\bar{k}/k) \to \ZZ_2^*$ is given by the cyclotomic character.
\end{theorem}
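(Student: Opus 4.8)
The plan is to identify the quotient $\ga(f)/G^{\text{geom}}(f)$ with the Galois group of the constant field subextension and then pin that extension down as the full $2$-power cyclotomic tower. Writing $K_\infty$ for the compositum of the splitting fields of $f^n(x)-t$ over $k(t)$ and $k_\infty := \bar{k}\cap K_\infty$ for its field of constants, restriction yields the canonical short exact sequence
\[
1 \longrightarrow G^{\text{geom}}(f) \longrightarrow \ga(f) \longrightarrow \Gal(k_\infty/k) \longrightarrow 1,
\]
so that $\ga(f)/G^{\text{geom}}(f) \cong \Gal(k_\infty/k)$, and the restriction map $\Gal(\bar k/k) \twoheadrightarrow \Gal(k_\infty/k)$ is the homomorphism appearing in the statement. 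Everything therefore reduces to proving $k_\infty = k(\zeta_{2^\infty})$, the field generated over $k$ by all $2$-power roots of unity, after which the cyclotomic character provides the identification with $\ZZ_2^*$.

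First I would establish the inclusion $k(\zeta_{2^\infty}) \subseteq k_\infty$. This is essentially already available from Theorem~\ref{unity}(2), which produces for every $j \geq 1$ a primitive $2^j$-th root of unity inside $K_\infty$ and records the precise finite level at which it first appears; since each $\zeta_{2^j}$ is algebraic over $k$ it lies in $k_\infty$, and the action of $\ga(f)$ on these roots of unity is, by construction, the cyclotomic character composed with the quotient map. This already exhibits a surjection $\ga(f)/G^{\text{geom}}(f) \twoheadrightarrow \Gal(k(\zeta_{2^\infty})/k)$ realized by the cyclotomic character.

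The substance of the theorem is the reverse inclusion $k_\infty \subseteq k(\zeta_{2^\infty})$, i.e. that these roots of unity account for \emph{all} the new constants, and here I would argue level by level by comparing orders. At level $n$ one has $[k_{\infty,n}:k] = |\gan(f)|/|G^{\text{geom}}_n(f)|$, where $k_{\infty,n} = \bar k \cap K_n$. Conjecture~\ref{conj:G} supplies $\log_2|G^{\text{geom}}_n(f)|$, and combined with the explicit description of $\gan(f)$ obtained earlier in the paper this determines $\log_2[k_{\infty,n}:k]$. The goal is to verify that this index matches exactly $[k(\zeta_{2^{m(n)}}):k]$, where $2^{m(n)}$ is the largest $2$-power root of unity guaranteed at level $n$ by Theorem~\ref{unity}(2); the two bounds then force $k_{\infty,n} = k(\zeta_{2^{m(n)}})$ for each $n$, and passing to the limit gives $k_\infty = k(\zeta_{2^\infty})$ together with the identification of the quotient with $\ZZ_2^*$ via the cyclotomic character.

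The main obstacle is precisely this exact order match in the final step. It is conditional on Conjecture~\ref{conj:G}, and even granting the conjecture one must carefully track the level shift between the index $[\gan(f):G^{\text{geom}}_n(f)]$ and the cyclotomic degree $[k(\zeta_{2^{m(n)}}):k]$ coming from Theorem~\ref{unity}(2), so as to rule out any spurious constants — for instance those arising from discriminants or from the square roots such as $\sqrt{a-1}$ that intervene in the specialization analysis. If $\Gal(k(\zeta_{2^\infty})/k)$ happens not to exhaust $\ZZ_2^*$, which can occur only when $k \cap \QQ(\zeta_{2^\infty}) \neq \QQ$, the same argument still identifies the quotient with the image of the cyclotomic character, and the full statement is recovered under the standing assumption that $k$ is linearly disjoint from the cyclotomic tower.
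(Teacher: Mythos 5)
Your overall frame matches the paper's: identify $\ga(f)/G^{\text{geom}}(f)$ with the Galois group of the constant field extension via the exact sequence \eqref{eq:exact}, obtain $k(\zeta_{2^j}:j\geq 1)\subseteq k_\infty$ from Theorem~\ref{unity}, and then show there are no constants beyond the cyclotomic ones. But your mechanism for that last (and only hard) step has a genuine gap. You propose to compute $[k_{\infty,n}:k]=|\gan|/|G^{\text{geom}}_n|$ using ``the explicit description of $\gan(f)$ obtained earlier in the paper.'' No such description exists, and none can be invoked independently of the theorem: the level-$n$ version of \eqref{eq:exact} gives $|\gan|=|G^{\text{geom}}_n|\cdot[F_n:k]$, where $F_n$ is the constant field of $K_n$, so knowing $|\gan|$ is \emph{equivalent} to knowing the constant field degree --- precisely the quantity you are trying to determine. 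Your order-matching step is circular. The only place the paper controls $\gan$ directly is the Magma computation of the normalizer $N_5$ of $G_5$ in $W_5$ inside the proof of Theorem~\ref{main}; there is no analogue, and no general normalizer bound, for arbitrary $n$.

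The paper closes this gap by a different device that never computes $|\gan|$: it works entirely on the group side, inside the geometric group. Proposition~\ref{kernelzeta} translates the condition that an element $w\in\ga$ trivial on level $n-1$ fixes $\zeta_{2^j}$ into the purely combinatorial parity condition \eqref{eq:zeta} on the tree action. Geometric elements fix $\bar{k}$, so $G_n\subseteq\ker(\chi_n)$ and the kernels $U_n$ of $G_n\to G_{n-1}$ satisfy \eqref{eq:zeta}; the paper then proves that any tower constrained in this way has $\log_2$-order at most the value predicted by Conjecture~\ref{conj:G} (the key step is the index computation $|(U_{n-1}\times U_{n-1}):U_n|\in\{1,2\}$ according to the parity of $n$, established with the explicit elements $v_n$). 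Granting the conjecture, $G_n$ attains this maximum, hence $G_n$ is exactly the subgroup of elements of $\gan$ fixing the relevant roots of unity, so the quotient $\ga/G^{\text{geom}}$ is the image of the cyclotomic character, as claimed. If you wish to salvage your approach, what you need is exactly such an a priori upper bound on $\ker(\chi_n)$ (equivalently on $[F_n:k]$) that does not presuppose knowledge of $\gan$; Proposition~\ref{kernelzeta} plus this counting argument is what plays that role. A secondary caution: Theorem~\ref{unity}(2) only gives a lower bound on which roots of unity occur by level $n$ (the paper notes minimality of those levels depends on $k$), so even the cyclotomic side of your proposed degree match is not pinned down by it.
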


We note that in \cite{LMY}, it is shown that there are only twelve quadratic rational PCF maps over $\mathbb{Q}$ up to conjugacy. Recent work of Ferraguti-Ostafe-Zannier \cite{fos} shows that for the rational maps $f$ that are $\mathbb{Q}$-conjugate to $\frac{2}{(x-1)^2}$ or $\frac{2x^2+4x+4}{-x^2+2}$, the group $\ga(f)/G^{\text{geom}}(f)$ has a non-abelian quotient, hence the statement of Theorem~\ref{thm:cyc} doesn't hold for these maps. See \cite[proof of Theorem~3.8]{fos} for details. These two maps both have a postcritical set that is strictly preperiodic. It would be interesting to find out if the above theorem holds for any other non-polynomial map in this list. 

For a closed subgroup $G$ of  $\Aut(T)$, the size is measured by its Hausdorff dimension which is defined as $\lim_{n\to \infty} \frac{\log_2|G_n|}{\log_2|W_n|}$(See Section \ref{sec:background} for definitions of these notations). If the conjecture holds, we calculate that the Hausdorff dimension of $G$ is $2/3$.

\subsection{Outline}

In Section~\ref{sec:rigid}, we describe the topological generators of $G^{\text{geom}}(f)$ recursively and show that $f$ satisfies a semi-rigidity property (Proposition~\ref{prop:conj}). Let $K_{\infty}=\bigcup_{n} K_n$ and $K_{n}$ is the splitting field of $f^{n}(x)-t$ over $K=k(t)$ for $n \geq 1$. In Section~\ref{sec:roots}, we prove that the field $K_{\infty}$ contains a primitive $2^m$th root of unity for all $m \geq 1$.
Section~\ref{sec:main} is reserved for the proof of Theorem~\ref{intromain}. One important ingredient in the proof is the discriminant calculation for the iterates of $f$ and this is done in section~\ref{sec:disc}. 
In Section~\ref{furtherG}, we study the group $G^{\text{geom}}(f)$ in more detail. We find an upper bound on the size of $G^{\text{geom}}(f)$ which agrees with Conjecture~\ref{conj:G}.

\subsection*{Acknowledgements:} We extend our gratitude to the referees for their invaluable feedback and corrections, particularly in relation to the suggestions concerning the structure and presentation of the introduction.

\section{Background}\label{sec:background}

\subsection{Automorphisms of the $d$-ary regular tree}\label{sec:tree}

Let $T$ be the infinite regular $d$-ary tree whose vertices are the finite words over the alphabet $\{1,2\}$. For any integer $n\geq 1$, we let $T_n$ denote the finite rooted subtree whose vertices are the words of length at most $n$. We call the set of words of length $n$  \emph{the level $n$} of $T$. We use the notation $W:=\Aut(T)$ and $W_n:=\Aut(T_n)$. 

We label the vertices of the tree as follows: the root of the tree corresponds to the level $0$, and has the empty label $()$. The vertices at the level $i$ are labeled as 
$(\ell_1,\ldots,\ell_i)$ with $\ell_j \in \{1,2\}$. Here $(\ell_1,\ldots,\ell_{i-1})$ is the unique vertex at level $i-1$ which is connected to $(\ell_1,\ldots,\ell_i)$ by an edge. For the vertices of $T_n$ at level $n$,
also called the \emph{leaves}, we additionally use the numbering
\begin{equation}\label{eq:leafnumber}
1+\sum_{k=1}^n(\ell_k-1) 2^{k-1}\in \{1,2,3,\ldots, 2^n\} \qquad \text{ instead of } (\ell_1,
\ldots, \ell_n).
\end{equation} in the examples.

Since $W_n$ acts faithfully on the leaves of the tree $T_n$, the
choice of the numbering induces an injective group homomorphism
\begin{equation}\label{eq:iota}
\iota_n: W_n \hookrightarrow S_{2^n}.
\end{equation}

\begin{example}
	Here is an example of a regular $2$-ary rooted tree of level $3$.	
\begin{center}
\scalebox{0.7}{	
\begin{forest}
	my tree
	[$\bullet$
	[(2)
	[(22)
	[(222)]
	[(221)]
	]
	[(21)
	[(212)]
	[(211)
	]]]
	[(1)
	[(12)
	[(122)]
	[(121)]
	]
	[(11)
	[(112)]
	[(111)
	]]]
	]
\end{forest}}
\end{center}

\end{example}

We embed  $W \times W$ into $W$ by identifying the complete subtrees rooted at level one of the tree $T$ with $T$ itself. The image of the embedding $W\times W \into W$ is given by 
the set of automorphisms acting trivially on the first level. 
The exact sequence $1 \to W\times W  \to W \to S_2 \to 1$ splits and gives the semi direct product 
\[ W \simeq (W\times W) \rtimes S_2 \text{ and }  W_n \simeq (W_{n-1} \times W_{n-1}) \rtimes S_2. \]
In other words, $W$ and $W_n$ have a wreath product structure:
\begin{equation}\label{eq:wr}
W \simeq W \wr S_2 \text{ and } W_n \simeq W_{n-1} \wr S_2 \end{equation}
for $n\geq 2$. 

This isomorphism in \ref{eq:wr} is induced by the two complete subtrees of $T_n$ at level $1$ that are copies of $T_{n-1}$.  This allows us to write the elements of $W_n$ as $(u,v)\tau$ where $u,v \in W_{n-1}$ and $\tau \in S_2$. 

We have the following relation in $W$ arising from the wreath product:
\begin{equation}\label{eq:relations} (x_1, x_2)\tau (y_1, y_2)\tau'= (x_1y_{\tau(1)},x_2y_{\tau(2)})\tau\tau' 
\end{equation}

Let $\sigma=(12) \in S_2$. We denote the automorphism that permutes two subtrees at level $1$ of the tree $T_n$ also by $\sigma$. With the notation above,

\begin{equation}\label{eq:sigma}
	\sigma:=(\id,\id)\sigma \in W_n \text{  for } n \geq 2.\
\end{equation} where $\id$ denotes the identity automorphism in $W_{n-1}.$

For every $n\geq 1$, we write $\pi_n$ for the natural projection
\begin{equation}\label{not:pi}
\pi_{n}: W \to W_n,
\end{equation}
which corresponds to restricting the action of an element of
$W$ to the subtree $T_m$ consisting of the levels $0, 1, \ldots, m$.

Similarly, for any $m \geq n$, we denote the natural projection $W_m \to W_n$ by $\pi_{m,n}$. We abuse the notation whenever the domain is clear and write it as $\pi_n$.
We denote the image of an element $w \in W$ (or $W_m$ for $m\geq n$ ) under $\pi_n$ as $w\restr{T_n}$.

Let $H$ be a subgroup of $W$. For each $n\geq 1$, we define $H_n:=\pi_n(H) \subset W_n$.

\begin{example}\label{ex:aut2}
	The automorphism group of the first level tree, $W_1=\Aut(T_1)$ is $S_2$ consisting of the identity automorphism and the transposition of the two leaves. The automorphism group of the second level tree, $W_2=\Aut(T_2)$,  is given by $W_1 \wr S_2=(W_1 \times W_1) \rtimes S_2$ and is isomorphic to the dihedral group of $8$ elements. Using the labeling below we can embed $W_2$ into $S_4$ and write the elements of $W_2$ as follows:
	
	\[W_2= \{ id, (13), (24), (13)(2 4), (1234), (1432), (12)(34), (14)(32)  \} \]
	
	The natural projection $\pi_{2,1}: W_2 \rightarrow W_1$ sends the first four elements of $W_2$ listed above to the identity automorphism on the red $T_1$ and the last 4 elements to the transposition on the red $T_1$.
	
	  \begin{center}    
		\includegraphics{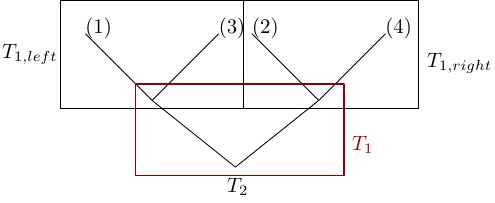}
	\end{center}

We can list the elements of $W_2$ in the wreath product notation as below.
$$id=(\id,\id)\id \quad (13)=(\sigma, \id)\id \quad (24)=(\id, \sigma)\id \quad (13)(24)=(\sigma, \sigma)\id $$ $$ (1234)=(\id,\sigma)\sigma \quad (1432)=(\sigma, \id)\sigma \quad (12)(34)=(\id,\id)\sigma \quad (14)(32)=(\sigma, \sigma ) \sigma$$

It is possible to see that the group $W_3=W_2 \wr S_2 $ consists of elements of the form $(f,g)\tau$ where $f,g \in W_2$ and $\tau \in S_2$. More precisely an arbitrary element of $W_3$ is of the form $((f_1,f_2)\tau_f, (g_1, g_2)\tau_g )\tau$ where each component is an element of the symmetric group $S_2$. Therefore $W_3$ has size $128$.
\end{example}

%%%%%%%%%%%%%%%%%%%%%%%%%%%%%%%%%%%%%%%%%%%%%

\subsection{Iterated monodromy groups}\label{sec:mon}
Let $k$ be a number field, fix an algebraic closure $\bar{k}$ of $k$.  Let $f:\PP^1_k \to \PP^1_k$ be a morphism of degree $d$ defined over $k$. Let $C$ be the set of critical points of $f$ and let $P$
be the forward orbit of the points in $C$, i.e.,
\[ P:=\{ f^n(c) : n\geq 1, c \in C\}. \]

Let $x_0 \in \PP^1_k(k)\backslash P$. Then each $f^n$ is a connected unramified covering of $\PP^1_k \backslash P$, hence it is determined  by the monodromy action of $\pi_1^{\acute{e}t}(\PP_k^1 \backslash P, x_0)$ on $f^{-n}(x_0)$ up to isomorphism. Let $T_{x_0}$ be the tree defined as follows: it is rooted at $x_0$, the leaves of $T_{x_0}$ are the points of $f^{-n}(x_0)$ for all $n\geq 1$, and the two leaves $p,q$ are connected if $f(p)=q$.  Varying $n$, associated monodromy defines a representation 
\begin{equation}\label{eq:rho}
 \rho: \pi_1^{\acute{e}t}(\PP_k^1 \backslash P, x_0) \to \Aut(T_{x_0})
\end{equation} 
whose image we call the \emph{arithmetic iterated monodromy group} $G^{\text{arith}}(f)$ of $f$. One can also study this representation over $\bar{k}$ and obtain 
\[ \pi_1^{\acute{e}t}(\PP_{\bar{k}}^1 \backslash P, x_0) \to \Aut(T_{x_0}).
\]
 We call the image of the map in this case the \emph{geometric iterated monodromy group} $G^{}(f)$. We note that these iterated monodromy groups are unique up to conjugation by the elements of $\Aut(T_{x_0})$. The arithmetic and the geometric iterated monodromy groups of $f$ fit into an exact sequence as follows:
 \begin{equation}\label{eq:exact}
   \begin{tikzcd}
1\arrow{r}  &  \pi_1^{\acute{e}t}(\PP_{\bar{k}}^1 \backslash P, x_0) \arrow{r} \arrow{d}& \pi_1^{\acute{e}t}(\PP_k^1 \backslash P, x_0) \arrow{r} \arrow{d} &\Gal(\bar{k}/ k) \arrow{r} \arrow{d} &1  \\
   1 \arrow{r} &   G^{\text{geom}}(f) \arrow{r} &G^{\text{arith}}(f) \arrow{r}  &\Gal(F /k) \arrow{r} &1 \\
     \end{tikzcd}
  \end{equation}
for some field extension $F$ of $k$. This field $F$ is called the constant field subextension and determining this field $F$ or its degree is a fundamental problem as explained in item~\ref{prob:constants} given in the introduction.

Equivalently, we can define the geometric (arithmetic resp.) iterated monodromy group of $f$ as the projective limit of the Galois group of the splitting field of the $n$th iteration $f^{n}(x)-t$ over $\overline{k}(t)$ (over $k(t)$ resp.). By abuse of notation we denote the geometric and the arithmetic iterated monodromy groups of $f$ as follows:
\[ G^{\text{geom}}(f):=\ilim_n \Gal((f^{n}(x)-t)/\overline{k}(t)) \text{ and }  G^{\text{arith}}(f):=\ilim_n \Gal((f^{n}(x)-t)/k(t)). \]

The groups $G^{\text{geom}} (f)$ and $G^{\text{arith}}(f)$ are profinite groups and they are embedded into $\Aut(T_{x_0})$ by construction. We identify the tree $T_{x_0}$ we constructed above with the infinite $2$-ary tree $T$ we described in the previous section. From now on, we  assume $G^{\text{geom}}(f)$ and $G^{\text{arith}}(f)$ are subgroups of $W=\Aut(T)$. Both of these groups are self-similar, i.e. for any $n\geq 2$, we have 
 \[ G_n^{\text{arith}} \subset G_{n-1}^{\text{arith}} \times G_{n-1}^{\text{arith}} \text{ and } G_n^{\text{geom}} \subset G_{n-1}^{\text{geom}} \times G_{n-1}^{\text{geom}}.
 \]
 Also note that naturally $G^{\text{geom}}(f)$ is a subgroup of $G^{\text{arith}}(f)$. We  drop $f$ from the notation whenever it is clear and write $G^{\text{geom}}$ and $G^{\text{arith}}$.

\section{Geometric iterated monodromy group of $f(x)=\frac{1}{(x-1)^2}$}\label{sec:rigid}

The group $G^{\text{geom}}$ is a quotient of the fundamental group of $\PP_{\bar{k}}^1\backslash P$ and it is topologically generated by the elements $b_p$ for all points $p$ in the postcritical set $P$(defined in Section \ref{sec:def}). When $P$ is finite, the geometric iterated monodromy group, $G^{\text{geom}}$, is topologically finitely generated and the product of the generators is identity since this is a property that holds for the fundamental group. These generators, $b_p$s, are conjugate to certain elements under $W$ as proved in Proposition 1.7.15 in \cite{Pinkpolyn}.

From now on, we  fix $f(x)$ as $\frac{1}{(x-1)^2}$. Note that the set of critical points of $f(x)$ is $C=\{1, \infty\}$ and the postcritical set of $f(x)$ is $P=\{0,1,\infty\}$. Now we state the aformentioned result of Pink for this case:

\begin{proposition}\cite[Proposition 1.7.15]{Pinkpolyn}\label{prop:Pink}
	For any $p \in P$, the element $b_p$ is conjugate under $W$ to 
	
	\begin{itemize}
		\item $(b_c,1)\sigma$ if $p=f(c)$ where $c$ is an element in $C \cap P=\{1, \infty\}$
		\item $(b_0,1)$ if $p=f(0)=1.$
	\end{itemize}
	
\end{proposition}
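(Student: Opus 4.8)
The statement is the specialization to our map of the general recursion for iterated monodromy groups recorded in \cite[Prop.~1.7.15]{Pinkpolyn}, so the plan is to re-derive it directly from the lifting description of the monodromy generators. Fix a basepoint $x_0 \in \PP^1_{\bar k}\setminus P$ and small positively oriented loops $\gamma_0,\gamma_1,\gamma_\infty$ about $0,1,\infty$ generating $\pi_1^{\acute{e}t}(\PP^1_{\bar k}\setminus P,x_0)$ with $\gamma_0\gamma_1\gamma_\infty=1$; their images under $\rho$ are $b_0,b_1,b_\infty$. Recall that the action of a generator $b_p$ on $T$ is encoded by (i) the permutation it induces on the first level $f^{-1}(x_0)$, read off from how the two sheets of $f$ are interchanged as one traverses $\gamma_p$, together with (ii) the sections at the first-level vertices, obtained by lifting $\gamma_p$ through $f$ at each preimage and re-expressing the resulting path as a loop at $x_0$ via a fixed system of connecting paths. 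The first-level permutation is a product of disjoint cycles whose lengths are the ramification indices of the preimages of $p$, and the section attached to a cycle is the monodromy generator around the corresponding preimage when that preimage lies in $P$, and is trivial otherwise.

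With this in hand the claim reduces to computing $f^{-1}(p)$ and the local ramification for each $p\in P=\{0,1,\infty\}$. First I would record: $f^{-1}(0)=\{\infty\}$ with ramification index $2$ (since $f$ behaves like $1/x^2$ near $\infty$) and $\infty\in P$; $f^{-1}(\infty)=\{1\}$ with ramification index $2$ (the double pole of $f$) and $1\in P$; and $f^{-1}(1)=\{0,2\}$, both unramified, with $0\in P$ but $2\notin P$. Substituting into the recursion gives: $b_0$ has first-level permutation $\sigma$ with section $b_\infty$, so $b_0$ is conjugate under $W$ to $(b_\infty,1)\sigma$, which is the case $p=f(c)$ with $c=\infty$; likewise $b_\infty$ is conjugate to $(b_1,1)\sigma$, the case $p=f(c)$ with $c=1$. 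For $p=1$ the permutation is trivial, the section at the vertex above $0$ is $b_0$, and the section at the vertex above $2$ is trivial because a small loop around $2$ is null-homotopic in $\PP^1_{\bar k}\setminus P$; hence $b_1$ is conjugate to $(b_0,1)$, matching $p=f(0)=1$.

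The step carrying the real content, and the one I expect to be the main obstacle, is the normalization in (ii): a priori each lifted section is only a conjugate of the relevant monodromy generator, the conjugator being the connecting-path loop, and different connecting paths may be needed at different vertices. What must be checked is that this freedom is exactly conjugation by an element of $W$, so that one may simultaneously place the nontrivial section in the first coordinate and straighten the conjugators to obtain $b_\infty,b_1,b_0$ on the nose rather than mere conjugates of them. This is precisely the bookkeeping that \cite[Prop.~1.7.15]{Pinkpolyn} packages in general. In our situation it is tractable because $P$ has only three points, the ramification above each point of $P$ is either total (index $2$) or trivial, and one checks directly which preimages lie in $P$, so the cycle structure is forced and the only remaining choices are orientations and the labelling of the two first-level vertices. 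I would therefore spend the bulk of the argument verifying that the hypotheses of \cite[Prop.~1.7.15]{Pinkpolyn} hold for $f$ and that the orientation conventions agree, after which the three conjugacies above follow formally.
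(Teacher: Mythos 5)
Your proposal is correct, but be aware that the paper contains no proof of this proposition to compare against: it is stated as a quotation of Pink \cite[Proposition~1.7.15]{Pinkpolyn} and used as a black box. What you supply is exactly the content the paper leaves implicit, namely the data needed to specialize Pink's general recursion to $f(x)=1/(x-1)^2$: your computations $f^{-1}(0)=\{\infty\}$ with ramification index $2$, $f^{-1}(\infty)=\{1\}$ with ramification index $2$, and $f^{-1}(1)=\{0,2\}$ unramified with $0\in P$ and $2\notin P$, are all correct, and feeding them into the wreath recursion yields $b_0\sim(b_\infty,1)\sigma$, $b_\infty\sim(b_1,1)\sigma$ and $b_1\sim(b_0,1)$, which is the statement (and agrees with Lemma~\ref{lem:conja}). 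One remark on your final paragraph: for the proposition as stated --- each $b_p$ \emph{separately} conjugate under $W$ to its normal form --- the ``straightening'' you single out as the main obstacle is elementary wreath-product algebra and does not require Pink. Indeed, if the two sections of $b_0$ are $s_1,s_2$ with $s_1s_2$ conjugate to $b_\infty$, then $(1,s_2^{-1})(s_1,s_2)\sigma(1,s_2^{-1})^{-1}=(s_1s_2,1)\sigma$ and $(w,w)(u,1)\sigma(w,w)^{-1}=(wuw^{-1},1)\sigma$, so the conjugators coming from choices of connecting paths are absorbed into a single $W$-conjugation; similarly $(gb_0g^{-1},1)=(g,1)(b_0,1)(g,1)^{-1}$ handles the unramified case. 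The genuinely hard simultaneous normalization --- one conjugation, up to correction by elements of $G_n$, working for all three generators at once --- is not this proposition but the paper's Proposition~\ref{prop:conj}, whose proof is where that bookkeeping is actually carried out. The only other gloss is that you argue with topological loops while the statement concerns \'etale fundamental groups, so strictly speaking one invokes the comparison theorem in characteristic zero, as Pink's setup does.
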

Let $a_1, a_2, a_3$ be elements of $W=\Aut (T_{\infty})$ defined as follows:
\[ a_1=(\id,a_3), a_2=(\id,a_1)\sigma, a_3=(a_2,\id)\sigma \] where $a_2$ and $a_3$ act by $\sigma$ and $a_1$ acts as the identity on the first level of the tree; and here $\sigma$ is the non-identity element in $S_2$. Let $G$ be the subgroup of $W$ with topological generators $a_1,a_2,a_3$, i.e., $G=\langle a_1,a_2,a_3\rangle$. 

For any $n\geq 1$, let $G_n$ denote the image of $G$ under the natural projection $\pi_n: W \to W_n$. For any $n\geq m$, by restricting the projection maps $\pi_{m}$ to $G_n$ we get projection maps  between $G_i$'s $(\pi_{m}){\restr{G_n}}: G_n \rightarrow G_m$ for $n \geq m$. We drop $G_n$ from the notation where the domain is clear. If $b_i \in W_n$ is conjugate to $a_i\restr{T_n}$ under $W_n$, we  denote this by $b_i \sim a_i$. 

We first show that the product of the generators $a_1,a_2,a_3$ is the identity.
\begin{lemma} \label{a-rel}
	$a_1a_2a_3=\id$ for all $n\geq 1$.
\end{lemma}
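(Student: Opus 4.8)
The plan is to compute the product $a_1a_2a_3$ directly in the wreath product $W\simeq W\wr S_2$ using the multiplication rule \eqref{eq:relations}, and then to run an induction on the level $n$ that exploits the self-similar form of the answer. The point is that the product does not collapse to the identity in a single step; instead it reproduces a cyclically permuted copy of itself one level down, and it is the induction that forces triviality on every finite tree $T_n$.

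First I would substitute the recursive expressions $a_1=(\id,a_3)$, $a_2=(\id,a_1)\sigma$, $a_3=(a_2,\id)\sigma$ into the product and apply \eqref{eq:relations} twice. Multiplying $a_1a_2$ (where $a_1$ has trivial $S_2$-part) gives $(\id,a_3a_1)\sigma$, and multiplying by $a_3$ (which carries a second factor $\sigma$, so the two transpositions cancel) yields
\[
  a_1a_2a_3=(\id,\,a_3a_1a_2).
\]
For the base case $n=1$, the first level sees $a_1$ as $\id$ and both $a_2,a_3$ as $\sigma$, so $(a_1a_2a_3)\restr{T_1}=\sigma^2=\id$.

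The observation that closes the induction is that $a_3a_1a_2=a_3(a_1a_2a_3)a_3^{-1}$ is a $W$-conjugate of $a_1a_2a_3$; since conjugation commutes with the projection $\pi_{n-1}$, the element $a_3a_1a_2$ is trivial on $T_{n-1}$ exactly when $a_1a_2a_3$ is. Assuming inductively that $(a_1a_2a_3)\restr{T_{n-1}}=\id$, the displayed identity shows that $(a_1a_2a_3)\restr{T_n}$ acts trivially on the first level and, on the subtree hanging below the second level-one vertex, acts as $(a_3a_1a_2)\restr{T_{n-1}}=\id$; hence $(a_1a_2a_3)\restr{T_n}=\id$. The only genuine subtlety — and the reason a naive one-step computation does not suffice — is this self-referential recursion, which the conjugacy remark converts into a clean descent on $n$.
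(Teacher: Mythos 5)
Your proof is correct and follows essentially the same route as the paper's: the same wreath-product computation $a_1a_2a_3=(\id,a_3a_1a_2)$, the same base case on $T_1$, and the same induction, with your conjugation remark $a_3a_1a_2=a_3(a_1a_2a_3)a_3^{-1}$ being just a repackaging of the paper's rearrangement $(a_1a_2)\restr{T_{n-1}}=a_3^{-1}\restr{T_{n-1}}$.
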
	
\begin{proof}
	When $n=1$,  $a_1a_2a_3=\sigma^2=\id$ as $a_2$ and $a_3$ act by $\sigma$ and $a_1$ acts as the identity on $T_1$. Now assume $a_1 a_2 a_3\restr{T_{n-1}}=\id$, then $(a_1 a_2)\restr{T_{n-1}}=a_3^{-1}\restr{T_{n-1}}$ and hence 
	\[a_3\restr{T_{n-1}}(a_1 a_2 )\restr{T_{n-1}}=a_3 a_1 a_2 \restr{T_{n-1}}=\id.\] 
We also calculate that
\begin{align*}
a_1a_2a_3&=(\id,a_3)(\id,a_1)\sigma(a_2,\id)\sigma \\ 
          &=(\id,a_3)(\id,a_1)(\id,a_2)\sigma^2 \\
          &=(\id,a_3a_1a_2)
\end{align*}
Hence \[a_1 a_2 a_3 \restr{T_n}=(\id,a_3 a_1 a_2\restr{T_{n-1}})=(\id,\id)=\id\] by induction and $a_1 a_2 a_3=\id$ in $W$ since its projection in $W_n$ is identity for all $n$. 
\end{proof}

\begin{lemma}\label{lem:conja}
 The elements $a_1,a_2,a_3$ are conjugate to $b_1,b_\infty,b_0$ respectively.
\end{lemma}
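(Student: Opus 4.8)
The plan is to read off from Proposition~\ref{prop:Pink} the recursive shapes of the monodromy generators $b_0,b_1,b_\infty$ and to check that $a_1,a_2,a_3$ satisfy the \emph{same} recursion up to the coordinate swap $\sigma$, so that a level-by-level induction produces the three conjugacies. Since the postcritical set is the $3$-cycle $0\to 1\to\infty\to 0$ with $C\cap P=\{1,\infty\}$, Proposition~\ref{prop:Pink} specializes (using $1=f(0)$ with $0\notin C$ for the first relation, and $\infty=f(1)$, $0=f(\infty)$ for the others) to
\[ b_1\sim (b_0,1),\qquad b_\infty\sim (b_1,1)\sigma,\qquad b_0\sim (b_\infty,1)\sigma. \]
Using the wreath relation~\eqref{eq:relations} one computes $\sigma(\id,x)\sigma^{-1}=(x,\id)$, so conjugating by $\sigma$ turns $a_1=(\id,a_3)$ and $a_2=(\id,a_1)\sigma$ into $(a_3,\id)$ and $(a_1,\id)\sigma$, while $a_3=(a_2,\id)\sigma$ already carries its active child in the first coordinate. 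Hence, under the intended correspondence $a_1\leftrightarrow b_1$, $a_2\leftrightarrow b_\infty$, $a_3\leftrightarrow b_0$, the triple $(a_1,a_2,a_3)$ obeys exactly Pink's recursion.

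First I would prove $a_1\sim b_1$, $a_2\sim b_\infty$, $a_3\sim b_0$ simultaneously by induction on the level $n$, inside $W_n\simeq W_{n-1}\wr S_2$. At level $1$ the elements $a_1$ and $b_1$ act trivially while $a_2,a_3,b_\infty,b_0$ all act by $\sigma$, so the three conjugacies hold trivially. For the inductive step, assume they hold on $T_{n-1}$. For $a_1$, conjugation by $\sigma$ gives $a_1\restr{T_n}\sim (a_3\restr{T_{n-1}},\id)$; choosing $w\in W_{n-1}$ with $w\,a_3\restr{T_{n-1}}\,w^{-1}=b_0\restr{T_{n-1}}$ and conjugating by $(w,\id)$ yields $(b_0,1)\restr{T_n}\sim b_1\restr{T_n}$ by Proposition~\ref{prop:Pink}. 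For $a_2$, conjugation by $\sigma$ gives $(a_1\restr{T_{n-1}},\id)\sigma$, and conjugating by $(w,w)$ with $w\,a_1\restr{T_{n-1}}\,w^{-1}=b_1\restr{T_{n-1}}$ gives $(b_1,1)\sigma\restr{T_n}\sim b_\infty\restr{T_n}$. For $a_3=(a_2,\id)\sigma$, conjugating by $(w,w)$ with $w\,a_2\restr{T_{n-1}}\,w^{-1}=b_\infty\restr{T_{n-1}}$ gives $(b_\infty,1)\sigma\restr{T_n}\sim b_0\restr{T_n}$. Here I use that conjugating $(x,\id)\sigma$ by $(w,w)$ produces $(wxw^{-1},\id)\sigma$, which is immediate from~\eqref{eq:relations}. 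The key point is that each of the three statements feeds into another only after a drop in level, so the apparent cyclic dependence is harmless and the induction closes.

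Finally I would upgrade these finite-level conjugacies to genuine conjugacies in $W=\varprojlim_n W_n$. For each $i$ the set of $w\in W_n$ conjugating $a_i\restr{T_n}$ to the relevant $b\restr{T_n}$ is finite, nonempty, and compatible under the projections $W_n\to W_{n-1}$, so its inverse limit is nonempty and produces a single conjugator in $W$; this yields $a_1\sim b_1$, $a_2\sim b_\infty$, $a_3\sim b_0$.

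I expect the main obstacle to be bookkeeping the correspondence correctly rather than any deep difficulty: the coordinate mismatch between $a_1=(\id,a_3)$, $a_2=(\id,a_1)\sigma$ and Pink's forms $(b_0,1)$, $(b_1,1)\sigma$ must be absorbed by $\sigma$ exactly, and one must resist seeking a single simultaneous conjugator at a fixed level, since the cyclic dependence only unwinds when each relation is reduced to the previous level. The relation $a_1a_2a_3=\id$ from Lemma~\ref{a-rel} serves as a consistency check that the matched generators carry the same product relation as the topological generators $b_p$.
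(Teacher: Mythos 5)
Your proposal is correct, and it begins exactly where the paper does: specializing Proposition~\ref{prop:Pink} to the cycle $0\to 1\to\infty\to 0$ to get $b_1\sim(b_0,1)$, $b_\infty\sim(b_1,1)\sigma$, $b_0\sim(b_\infty,1)\sigma$, and observing that $a_1=(\id,a_3)$, $a_2=(\id,a_1)\sigma$, $a_3=(a_2,\id)\sigma$ satisfy the matching recursions up to the swap $\sigma$. Where you genuinely diverge is in how the conjugacies are concluded: the paper finishes in one line by citing \cite[Lemma~1.3.1]{Pinkpolyn}, which is precisely the semirigidity statement that elements satisfying the same recursion relations up to conjugacy are conjugate under $W$; you instead reprove that statement in this special case, via a simultaneous induction on the level (each of the three conjugacies at level $n$ consumes only level-$(n-1)$ data, so the cyclic dependence among the three recursions unwinds) followed by the standard compactness argument that an inverse system of nonempty finite sets of conjugators has nonempty inverse limit. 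Both routes are valid, and your intermediate computations ($\sigma(\id,x)\sigma^{-1}=(x,\id)$ and $(w,w)\,(x,\id)\sigma\,(w,w)^{-1}=(wxw^{-1},\id)\sigma$, both instances of~\eqref{eq:relations}) check out. The citation is of course shorter; what your version buys is self-containedness, an explicit explanation of why the apparent circularity is harmless, and arguably greater fidelity to Proposition~\ref{prop:Pink} as stated: you use only the conjugacies $b_p\sim(\cdot)$, whereas the paper's proof writes the equalities $b_1=(b_0,\id)$, $b_\infty=(b_1,\id)\sigma$, $b_0=(b_\infty,\id)\sigma$, which hold only after a choice of representatives — exactly the issue that Pink's lemma (or your induction) is there to handle.
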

\begin{proof} By Proposition~\ref{prop:Pink}, $b_1=(b_0,\id), b_{\infty}=(b_1,\id)\sigma$ and $b_0=(b_{\infty},\id)\sigma$. The statement follows from \cite[Lemma 1.3.1]{Pinkpolyn}.
\end{proof}
Next proposition defines the geometric iterated monodromy group up to conjugacy. Our proof is an adjustment of the proof of \cite[Proposition~2.4.1]{Pinkpolyn} with the additional assumption that the product of the generators is identity. 
\begin{proposition} \label{prop:conj}
	For any elements $b_i \in W_n$ such that $b_i \sim a_i\restr{T_n}$ and satisfying  $b_1b_2b_3=\id$, there exists $w \in W_n$ and $x_i \in G_n$ such that $b_i=(wx_i)(a_i\restr{T_n})(wx_i)^{-1}$ for each $i$.  We denote this statement as $*_n$.
\end{proposition}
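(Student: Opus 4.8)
The plan is to prove the statement $*_n$ by induction on $n$, using the self-similar wreath-product structure of $W_n$ and the fact that each $a_i$ is defined recursively in terms of $a_1, a_2, a_3$ at level $n-1$. The base case $n=1$ is easy: on $T_1$ we have $a_1 = \id$ and $a_2 = a_3 = \sigma$, so any triple $b_1, b_2, b_3 \in W_1 = S_2$ with $b_i \sim a_i\restr{T_1}$ (which forces $b_1 = \id$, $b_2 = b_3 = \sigma$) and $b_1b_2b_3 = \id$ literally equals the triple $(a_i\restr{T_1})$, so we may take $w = \id$ and $x_i = \id$.

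For the inductive step I would assume $*_{n-1}$ and take a triple $b_1, b_2, b_3 \in W_n$ with $b_i \sim a_i\restr{T_n}$ and $b_1b_2b_3 = \id$. Writing elements of $W_n$ in wreath notation $(u,v)\tau$, the conjugacy $b_i \sim a_i\restr{T_n}$ together with the explicit recursions $a_1 = (\id, a_3)$, $a_2 = (\id, a_1)\sigma$, $a_3 = (a_2, \id)\sigma$ pins down the cycle type of each $b_i$ on the first level and, after conjugating, expresses its two second-level components in terms of elements conjugate to the appropriate $a_j\restr{T_{n-1}}$. The first move is to conjugate all three $b_i$ simultaneously by a single element so that $b_1 = (c, d)$ acts trivially on the first level while $b_2, b_3$ act by $\sigma$; one then reads off from $b_i \sim a_i$ and from the relation $b_1 b_2 b_3 = \id$ a collection of elements of $W_{n-1}$ that are conjugate (individually) to $a_1, a_2, a_3$ restricted to $T_{n-1}$ and whose product is the identity. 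Applying the inductive hypothesis $*_{n-1}$ to this collection yields a single $w' \in W_{n-1}$ and elements $x_i' \in G_{n-1}$ simultaneously conjugating them to the $a_i\restr{T_{n-1}}$. Finally I would assemble $w \in W_n$ and $x_i \in G_n$ from $w'$ and $x_i'$ (via the wreath embedding, possibly absorbing a correction on the first level), and verify by a direct computation using the relation \eqref{eq:relations} that $b_i = (wx_i)(a_i\restr{T_n})(wx_i)^{-1}$.

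The main obstacle is exactly the ``simultaneous conjugation'' issue flagged as Problem~\ref{prob:conj} in the introduction: a priori each $b_i$ is conjugate to $a_i$ by its \emph{own} element of $W_n$, and the content of the proposition is that one can choose these conjugators to differ only by elements of $G_n$ (captured by the $x_i$) after a single overall $w$. The relation $b_1b_2b_3 = \id$ is what makes this possible — it is the extra hypothesis that Pink's original Proposition~2.4.1 does not assume, and it is precisely what couples the three independent conjugations into one. Concretely, after arranging the first-level action, the product relation forces the second-level data of $b_1, b_2, b_3$ to fit together so that the \emph{same} inductive conjugator works for all three; without it the three lower-level conjugators produced by $*_{n-1}$ could be incompatible. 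I would therefore spend the most care tracking how $b_1b_2b_3 = \id$ descends to a product relation among the level-$(n-1)$ components, so that $*_{n-1}$ can be invoked with its own product hypothesis satisfied, and on checking that the reassembled $w$ and $x_i$ genuinely lie in $W_n$ and $G_n$ respectively.
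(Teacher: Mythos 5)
Your plan is essentially the paper's own proof: the paper also argues by induction on $n$, first reducing (by a simultaneous conjugation) to the case $b_1=(\id,c_3)$, then writing $b_2,b_3$ as explicit wreath-product conjugates, using the relation $b_1b_2=b_3^{-1}$ together with the observation that $(u,v)\sigma \sim (\id,w)\sigma$ forces $uv\sim w$ to manufacture a level-$(n-1)$ triple $c_1,\ c_2:=(c_3c_1)^{-1},\ c_3$ that satisfies the hypotheses of $*_{n-1}$, and finally assembling a single $w\in W_n$ together with diagonal correctors $x_i\in G_n$ exactly as you describe. Your outline matches this structure point for point, including the correct identification of the product relation $b_1b_2b_3=\id$ as the mechanism that couples the three conjugations and lets the inductive hypothesis be invoked with its own product condition satisfied.
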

\begin{proof} By assumption $b_1 \sim a_1\restr{T_n}=(\id , a_3\restr{T_{n-1}}).$ By \cite[Lemma~1.2.1]{Pinkpolyn} we have $b_1=( c_3 ,\id  )$ or $( \id, c_3 )$ for some $c_3 \in W_{n-1}$ that is conjugate to $a_3\restr{T_{n-1}}$ .
	
Assume $b_1=(c_3,\id)$ where $c_3 \sim a_3\restr{T_{n-1}} \in W_{n-1}$. Let $b_1'=(\id,c_3)$ and assume there is $w \in W_n$ and $x_1',x_2,x_3 \in G_n$ such that the statement holds for $b_1',b_2,b_3$. Then taking, $x_1=x_2a_2x_2^{-1}x_1'$ (which is in $G_n$), we see that $w, x_1,x_2,x_3$ satisfy the statement for $b_1,b_2,b_3$. This reduces the proof to the case $b_1=( \id, c_3 )$. 
	
	We assume $b_1=(\id,c_3)$ where $c_3$ is conjugate to $a_3\restr{T_{n-1}} $. Since $b_2$ is conjugate to $a_2\restr{T_{n}}$, there is some $f \in W_n$ satisfying $b_2=fa_2f^{-1}$. If $f\restr{T_1} \neq \id$ then we can replace $f$ with $f'=f a_2$. Note that $f'\restr{T_1}= \id$ and we still get $b_2 \sim a_2$ through $f'$. Therefore we may assume that $f=(d,e)$ and therefore \begin{equation}\label{b2}
		b_2=fa_2f^{-1}=(d,e)(\id, a_1)\sigma(d^{-1}, e^{-1})=(de^{-1}, ea_1d^{-1}) \sigma.
		\end{equation} 
	
Using a similar argument we can deduce that $b_3=(g,h)(a_2, \id)\sigma(g^{-1}, h^{-1})=(ga_2h^{-1}, hg^{-1}) \sigma$ for some $(g,h) \in W_n$. 

Let's set $c_1:=ea_1e^{-1}$. We note that the element $c_3$ is not related to $c_1$. We will see shortly that $\{c_1,(c_1c_3)^{-1},c_3\}$ satisfies the property $*_{n-1}$.
	
	By assumption $b_1b_2=b_3^{-1}$ and is conjugate to $a_1a_2=a_3^{-1}$ under $W_n$. Therefore:
	
	\begin{align*}
		b_1b_2& =  (\id, c_3) (de^{-1}, ea_1d^{-1}) \sigma \\
		             & =(\id, c_3) (de^{-1}, ea_1e^{-1}ed^{-1}) \sigma\\
		             & =(\id, c_3) (de^{-1}, c_1(de^{-1})^{-1}) \sigma \\
		             & = (de^{-1}, c_3c_1(de^{-1})^{-1})\sigma
	\end{align*}

is conjugate to $a_1a_2=(\id, a_3a_1)\sigma$ under $W_n$. Using Lemma \ref{lem:conj}, this implies that  $de^{-1}c_3c_1(de^{-1})^{-1} \sim a_3a_1$ or $c_3c_1(de^{-1})^{-1}de^{-1} \sim a_3a_1.$ In either case, $c_3c_1 \sim a_3a_1$. 

Let's define $c_2:=(c_3c_1)^{-1}$. Now $\{c_1, c_2, c_3\}$ satisfies the statement $*_{n-1}$, hence by the induction assumption, there is  $u \in W_{n-1}$ and $x_i \in G_{n-1}$ such that 
\begin{equation}\label{ind} c_i=(ux_i) a_i\restr{T_{n-1}}(ux_i)^{-1}.
\end{equation}

Now we define $w \in W_n$ such that $b_2=wa_2\restr{T_n}w^{-1}.$ Let $w=(w_0,w_1)$ where $w_0=de^{-1}w_1$ and $w_1=ux_1$ . Here $u,x_1,d,e$ are as defined above. Then we get the following:

\begin{align*}
	wa_2\restr{T_n}w^{-1}&=(w_0,w_1)(\id, a_1)\sigma(w_0^{-1},w_1^{-1})\\
	&=(w_0w_1^{-1},w_1a_1w_0^{-1})\sigma\\
	&=(de^{-1},c_1ed^{-1})\sigma
	\end{align*}
The last equality follows from Equation (\ref{ind}). Now recall that by (\ref{b2})
$b_2=(de^{-1}, ea_1d^{-1}) \sigma$ and $c_1=ea_1e^{-1}$. Hence we get 
 
 \begin{align*}
 	b_2&=(de^{-1}, ea_1d^{-1}) \sigma \\
 	&=(de^{-1}, c_1ed^{-1}) \sigma\\
 	&=wa_2\restr{T_n}w^{-1}.
 		\end{align*}
 	
 	Now let's see that the same $w$ as above works for $b_1$ and $b_3$. We start with $b_1$.
 	
 	 \begin{align*}
 	 	w^{-1}b_1w&=(w_0^{-1}, w_1^{-1})(\id, c_3)(w_0,w_1)\\
 	 	&=(w_0^{-1}w_0, w_1^{-1}c_3w_1)\\
 	 	&=(\id, w_1^{-1}(ux_3)a_3(ux_3)^{-1}w_1)\\
 	 	&=(\id, (ux_1)^{-1}(ux_3)a_3(ux_3)^{-1}(ux_1))\\
 	 	 	&=(\id, (x_1^{-1}x_3)a_3(x_1^{-1}x_3)^{-1})\\
 	 	 	&=(x_1^{-1}x_3,x_1^{-1}x_3 )(\id,a_3)(x_1^{-1}x_3,x_1^{-1}x_3)^{-1}\\
 	 	\end{align*}	
 	 	Note that $x_1^{-1}x_3 \in G_{n-1}$. Hence $y_1=(x_1^{-1}x_3,x_1^{-1}x_3) \in G_n$ and we get
 	 	
 	 	$$w^{-1}b_1w=y_1a_1y_1^{-1}.$$
 	 	
 	 	Now we check $w^{-1}b_3w.$
 	 		\begin{align*}
 	 		w^{-1}b_3w&=(w_0^{-1}, w_1^{-1})(ga_2h^{-1}, hg^{-1})\sigma (w_0,w_1)\\
 	 		&=(w_0^{-1}ga_2h^{-1}w_1, w_1^{-1}hg^{-1}w_0) \sigma \\
 	 		&=((de^{-1}w_1)^{-1}ga_2h^{-1}w_1, w_1^{-1}hg^{-1}de^{-1}w_1)\sigma\\
 	 		&=(w_1^{-1}ed^{-1}ga_2h^{-1}w_1, w_1^{-1}hg^{-1}de^{-1}w_1)\sigma \\
  			\end{align*}
 	 	Now using the relation $b_1b_2=b_3^{-1}$ we get $(\id, c_3)(de^{-1},ea_1d^{-1})\sigma=(\id, c_3)(de^{-1},c_1ed^{-1})\sigma=(de^{-1},c_3c_1ed^{-1})\sigma=(hg^{-1},ga_2h^{-1})^{-1}\sigma$. Hence we have the following equalities:
 	 	
 	 	$$de^{-1}=gh^{-1}, c_3c_1d^{-1}=ha_2^{-1}g^{-1}$$
 	 	
 	 	Since $c_3c_1=c_2^{-1}$ we get $c_2=ed^{-1}ga_2h^{-1}=ha_2h^{-1}=ux_2a_2(ux_2)^{-1}$, therefore $x_2^{-1}u^{-1}ha_2(x_2^{-1}u^{-1}h)^{-1}=a_2$.
		%$x_2^{-1}u^{-1}h$ stabilizes $a_2$.
 	 	
 	 	Now we get using $w_1=ux_1$:
 	 	
 	 	\begin{align*}
 	 		w^{-1}b_3w&=(w_1^{-1}ha_2h^{-1}w_1, \id)\sigma \\
 	 		&=(x_1^{-1}u^{-1}ha_2h^{-1}ux_1, \id) \sigma \\
 	 		&=(x_1^{-1}x_2a_2x_2^{-1}x_1, \id)\sigma\\
 	 		&=(x_1^{-1}x_2,x_1^{-1}x_2 )(a_2, \id)\sigma (x_1^{-1}x_2, x_1^{-1}x_2)^{-1}\\
 	 		&=y_3a_3y_3^{-1}\\
 	 	\end{align*}
 	 	 
 	 	where $y_3=(x_1^{-1}x_2,x_1^{-1}x_2 ) \in G_n$
	\end{proof}

\begin{lemma}\label{lem:conj}
	
	if $(u,v)\sigma \sim (\id,w) \sigma$ then $uv \sim w.$
\end{lemma}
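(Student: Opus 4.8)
The plan is to exploit the wreath product structure $W_n \simeq W_{n-1}\wr S_2$ and to isolate the right conjugacy invariant. The key observation is that for \emph{any} element of $W_n$ that swaps the two first-level subtrees, i.e. any element of the form $(p,q)\sigma$, the $W_{n-1}$-conjugacy class of the ``coordinate product'' $pq$ is unchanged under conjugation by an arbitrary element of $W_n$. Granting this, the lemma is immediate: the hypothesis $(u,v)\sigma \sim (\id,w)\sigma$ forces the coordinate products to be conjugate, i.e. $uv \sim \id\cdot w = w$.

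To establish the invariance I would split according to the action on the first level, since every element of $W_n$ is either of the form $(a,b)$ or $(a,b)\sigma$. In the first case, using the multiplication rule \eqref{eq:relations} with $\tau=\sigma$ one computes
\begin{equation*}
(a,b)\,(u,v)\sigma\,(a,b)^{-1} = (aub^{-1},\,bva^{-1})\,\sigma,
\end{equation*}
whose coordinate product is $(aub^{-1})(bva^{-1}) = a(uv)a^{-1}$, manifestly $W_{n-1}$-conjugate to $uv$.

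In the second case one first records that $\big((a,b)\sigma\big)^{-1} = (b^{-1},a^{-1})\sigma$, and then applying \eqref{eq:relations} twice gives
\begin{equation*}
(a,b)\sigma\,(u,v)\sigma\,(b^{-1},a^{-1})\sigma = (avb^{-1},\,bua^{-1})\,\sigma,
\end{equation*}
with coordinate product $a(vu)a^{-1}$. Since $vu = v(uv)v^{-1}$ is conjugate to $uv$, this is again $W_{n-1}$-conjugate to $uv$. As these two cases exhaust $W_n$, the conjugacy class of the coordinate product is preserved, which completes the argument.

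The computations are entirely routine once the multiplication rule is fixed, so there is no genuine obstacle here; the ``coordinate product up to conjugacy'' is simply the standard conjugacy invariant for wreath-product elements that project to the nontrivial element of $S_2$. The only point demanding care is the bookkeeping in applying \eqref{eq:relations} when $\tau=\sigma$, namely correctly tracking which second-factor coordinate gets swapped, so that the cancellation producing $a(uv)a^{-1}$ (respectively $a(vu)a^{-1}$) comes out right.
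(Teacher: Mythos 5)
Your proof is correct, and every computation in it checks out against the multiplication rule \eqref{eq:relations}: conjugating $(u,v)\sigma$ by $(a,b)$ yields $(aub^{-1},bva^{-1})\sigma$ with coordinate product $a(uv)a^{-1}$, and conjugating by $(a,b)\sigma$ yields $(avb^{-1},bua^{-1})\sigma$ with coordinate product $a(vu)a^{-1}\sim uv$. However, your route differs from the paper's. The paper's proof is a one-line squaring trick: since conjugate elements have conjugate squares, $(u,v)\sigma\sim(\id,w)\sigma$ gives
\begin{equation*}
\bigl((u,v)\sigma\bigr)^2=(uv,\,vu)\ \sim\ \bigl((\id,w)\sigma\bigr)^2=(w,\,w),
\end{equation*}
and comparing coordinates (conjugation of an element trivial on the first level can only conjugate the coordinates, possibly after swapping them, and $uv\sim vu$) yields $uv\sim w$. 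Both arguments rest on the same underlying fact --- the coordinate product up to $W_{n-1}$-conjugacy is the conjugacy invariant of elements projecting to $\sigma$ --- but they package it differently. The squaring trick is shorter and sidesteps your two-case analysis entirely, pushing the bookkeeping into the easier problem of comparing coordinates of first-level-trivial elements; on the other hand, your direct computation is fully self-contained, makes the invariant explicit with exact conjugation formulas, and fills in a step that the paper leaves implicit (extracting $uv\sim w$ from $(uv,vu)\sim(w,w)$ itself requires the same kind of case check on the conjugator).
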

\begin{proof}
If $w:=(u,v)\sigma$ is conjugate to $(\id,w)\sigma$ in $W$, then $w^2=(uv,vu)$ is conjugate to $(w,w)$ in $W$. 
\end{proof}

By Proposition~\ref{prop:conj}, the group $G_n^{\text{geom}}$ is conjugate to $G_n$ for all $n\geq 1$. Taking limit over $n$, we have that $G^{\text{geom}}$ is conjugate to $G$ which allows us to identify $G^{\text{geom}}$ with $G$. 

\begin{notation}
	For the rest of the paper, we  assume that $G_n^{\text{geom}}$ equals $G_n$ for all $n\geq 1$ and $G^{\text{geom}}$ equals $G$.
\end{notation}
\section{The roots of unity}\label{sec:roots}

Let $K$ denote the field $k(t)$ and let $K_{n}$ be the splitting field of $f^{n}(x)-t$ over $K$ for $n \geq 1$. Define $K_{\infty}:=\bigcup_{n} K_n$. Then $G^{\text{arith}}=\Gal(K_{\infty}/K)$. We prove in Theorem~\ref{unity} that the field $K_{\infty}$ contains a primitive $2^m$th root of unity for all $m$, in particular the field $F$ in equation~\ref{eq:exact} is an infinite extension of $k$. 
  \begin{lemma} \label{lemma1} Let $\alpha$ denote a vertex of the infinite tree $T_{\infty}$ and let $f^{-1}(\alpha) = \{\alpha_1, \alpha_2\}$. Similarly let $f^{-1}(\alpha_i)=\{\alpha_{ij}: j=1,2\}$. We have
	\begin{enumerate}
	\item  $\alpha_1 \alpha_2=1-\frac{1}{\alpha}=\frac{\alpha-1}{\alpha}$.
	\item  $[(\alpha_{11}-1)(\alpha_{21}-1)]^2=\frac{1}{\alpha_1\alpha_2}=\frac{\alpha}{\alpha-1}$. 
	\end{enumerate}
	\end{lemma}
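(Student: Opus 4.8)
The plan is to reduce both statements to the defining equation of $f$ together with Vieta's formulas, since every preimage relation under $f$ encodes a quadratic whose coefficients are completely transparent. No clever idea is required; the work is purely in reading off symmetric functions of roots.

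For part (1), I would start from the fact that $\alpha_1,\alpha_2$ are by definition the two solutions of $f(x)=\alpha$, that is, $\frac{1}{(x-1)^2}=\alpha$. Clearing denominators gives $(x-1)^2=\frac{1}{\alpha}$, equivalently the monic quadratic $x^2-2x+\left(1-\frac{1}{\alpha}\right)=0$. The product of its two roots is read off directly from the constant term by Vieta's formula, yielding $\alpha_1\alpha_2=1-\frac{1}{\alpha}=\frac{\alpha-1}{\alpha}$. (The sum $\alpha_1+\alpha_2=2$ is recorded for free, though it is not needed.)

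For part (2), the key observation is that for any vertex $\gamma$ lying directly above a vertex $\beta$ (so $f(\gamma)=\beta$), the defining equation already gives $(\gamma-1)^2=\frac{1}{\beta}$, with no need to solve for $\gamma$ itself. Applying this with $\gamma=\alpha_{11}$, $\beta=\alpha_1$ and then with $\gamma=\alpha_{21}$, $\beta=\alpha_2$, I obtain $(\alpha_{11}-1)^2=\frac{1}{\alpha_1}$ and $(\alpha_{21}-1)^2=\frac{1}{\alpha_2}$. Multiplying these two identities and substituting the product $\alpha_1\alpha_2$ computed in part (1) gives
\[
[(\alpha_{11}-1)(\alpha_{21}-1)]^2=\frac{1}{\alpha_1\alpha_2}=\frac{\alpha}{\alpha-1},
\]
which is the desired formula.

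I do not expect a genuine obstacle here: both claims are algebraic identities following from Vieta's formulas and back-substitution. The only point requiring a little care is the bookkeeping of indices — noticing that part (2) pairs a preimage $\alpha_{11}$ of $\alpha_1$ with a preimage $\alpha_{21}$ of $\alpha_2$, rather than two preimages of a single vertex — and observing that squaring on the left-hand side is precisely what absorbs the ambiguity in the choice of square roots $(\alpha_{ij}-1)$. Hence the identity holds regardless of which of the two preimages above each $\alpha_i$ is labelled with second index $1$, and no branch of square root needs to be fixed.
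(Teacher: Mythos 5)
Your proof is correct and follows essentially the same route as the paper: both reduce the two claims to the defining relation $(x-1)^2=\frac{1}{\alpha}$ satisfied by any preimage of $\alpha$. The only cosmetic difference is that the paper solves the quadratic explicitly as $\alpha_i=1\pm\frac{1}{\sqrt{\alpha}}$ and multiplies, whereas you read off the product via Vieta and use $(\alpha_{ij}-1)^2=\frac{1}{\alpha_i}$ directly, which has the minor advantage of making explicit that the identity is independent of how the preimages are labelled and requires no choice of square-root branch.
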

	\begin{proof} 
	Since $\frac{1}{(x-1)^2}=\alpha$ we get the roots $\alpha_1,\alpha_2$ satisfy $\alpha_i=1 \pm \frac{1}{\sqrt{\alpha}}$ hence the first result follows. Using the notation $\alpha_{11}=1+\frac{1}{\sqrt{\alpha_1}}, \alpha_{12}=1-\frac{1}{\sqrt{\alpha_1}}, \alpha_{21}=1+\frac{1}{\sqrt{\alpha_1}}, \alpha_{22}=1-\frac{1}{\sqrt{\alpha_1}}$ and by the first part we get $[(\alpha_{11}-1)(\alpha_{21}-1)]^2=\frac{1}{\alpha_1\alpha_2}=\frac{\alpha-1}{\alpha}.$
	\end{proof}

	Let $\alpha$ be any vertex in the tree $T_\infty$. 
	We  denote the two vertices in $f^{-1}(\alpha)$ by $\alpha_{1}$ and $\alpha_{2}$ where $\alpha_{1}=1+\frac{1}{\sqrt{\alpha}}$ and $\alpha_{2}=1-\frac{1}{\sqrt{\alpha}}$. As illustrated in the figure below $\alpha_{1}$ denotes the vertex to the left and $\alpha_{2}$ denotes the one to the right. 
	\begin{center} 
	\scalebox{0.7}{	
		\begin{forest}
			my tree
			[$\alpha$
			[$\alpha_2$
			[$\alpha_{22}$
			]
			[$\alpha_{21}$
		     ]]
			[$\alpha_1$
			[$\alpha_{12}$
			]
			[$\alpha_{11}$			
			]]
			]
	\end{forest}}
\end{center}
	Using this convention we get the following result:

	\begin{lemma}\label{lemma4e} For any vertex $\alpha$ of $T_{\infty}$, we have $\sqrt{\alpha} \in K_{\infty}$ and
		$$\alpha-1=\left[\frac{1}{(\alpha_{1}-1)(\alpha_{11}-1)(\alpha_{21}-1)} \right]^2$$
	\end{lemma}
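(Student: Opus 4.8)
The plan is to reduce both assertions to a single defining relation of a child vertex: if $v'$ is any child of a vertex $v$ in $T_{\infty}$, then $f(v')=v$, which in coordinates reads $(v'-1)^2=1/v$. Combining this with Lemma~\ref{lemma1}(1) will finish everything. Observe that the displayed identity involves only the three vertices $\alpha_1$, $\alpha_{11}$ and $\alpha_{21}$, which are children of $\alpha$, $\alpha_1$ and $\alpha_2$ respectively, so every factor appearing is controlled by this one relation.

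For the membership $\sqrt{\alpha}\in K_{\infty}$, I would argue directly from the labelling convention. The left child of $\alpha$ is $\alpha_1=1+\tfrac{1}{\sqrt{\alpha}}$, so $\alpha_1-1=\tfrac{1}{\sqrt{\alpha}}$ and hence $\sqrt{\alpha}=\tfrac{1}{\alpha_1-1}$. Since $\alpha$ is a vertex of $T_{\infty}$, its children are again vertices and therefore lie in $K_{\infty}$; in particular $\alpha_1\in K_{\infty}$. As $K_{\infty}$ is a field, we conclude $\sqrt{\alpha}=\tfrac{1}{\alpha_1-1}\in K_{\infty}$. This simultaneously pins down the particular square root of $\alpha$ that we use, consistently with the convention fixed above.

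For the displayed identity, rather than propagate the nested square-root signs I would square the bracketed quantity outright. The relation $f(\text{child})=\text{parent}$ gives $(\alpha_1-1)^2=1/\alpha$, $(\alpha_{11}-1)^2=1/\alpha_1$ and $(\alpha_{21}-1)^2=1/\alpha_2$, so that
\[
\bigl[(\alpha_1-1)(\alpha_{11}-1)(\alpha_{21}-1)\bigr]^2=\frac{1}{\alpha\,\alpha_1\alpha_2}.
\]
By Lemma~\ref{lemma1}(1) we have $\alpha_1\alpha_2=\tfrac{\alpha-1}{\alpha}$, whence $\alpha\,\alpha_1\alpha_2=\alpha-1$ and the right-hand side equals $1/(\alpha-1)$. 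Taking reciprocals yields $\bigl[\tfrac{1}{(\alpha_1-1)(\alpha_{11}-1)(\alpha_{21}-1)}\bigr]^2=\alpha-1$, as claimed.

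There is no serious obstacle here; the work is essentially bookkeeping. The only points requiring care are (i) applying the labelling convention correctly, so that $\alpha_{11}$ and $\alpha_{21}$ are indeed the ``$+\tfrac{1}{\sqrt{\cdot}}$'' children of $\alpha_1$ and $\alpha_2$, and (ii) squaring the product \emph{before} invoking Lemma~\ref{lemma1}, which cleanly sidesteps any sign ambiguity in the nested radicals $\sqrt{\alpha_1}$ and $\sqrt{\alpha_2}$. One should also tacitly exclude the degenerate values $\alpha\in\{0,1\}$ (postcritical points), which do not arise as vertices since $t$ is transcendental over $k$, so that all the inversions above are legitimate.
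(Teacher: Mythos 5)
Your proof is correct and follows essentially the same route as the paper's: both use the defining relations $(\alpha_1-1)^2=1/\alpha$, $(\alpha_{i1}-1)^2=1/\alpha_i$ to identify the squared bracket with $\alpha\,\alpha_1\alpha_2$ (equivalently its reciprocal), and then invoke Lemma~\ref{lemma1}(1) to conclude $\alpha\,\alpha_1\alpha_2=\alpha-1$. The only difference is cosmetic, namely the moment at which reciprocals are taken.
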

	
	\begin{proof}
		The first claim follows from the fact that $\alpha_{i} -1 \in K_\infty$ and $\alpha=\left( \frac{1}{\alpha_{1}-1}\right)^2$.
	
	Similarly we have  $$f^{-1}(\alpha_{i})=\{\alpha_{i1}, \alpha_{i2}\} \text{ and } \alpha_{i}=\frac{1}{(\alpha_{i1}-1)^2}$$ for $i=1,2$. Combining these we see that the right hand side of the claimed equality is $\alpha \cdot \alpha_{1} \cdot \alpha_{2} $.  Using $\alpha_{1} \cdot \alpha_{2}=1-\frac{1}{\alpha}$
		we get that;
		$ \alpha \cdot \alpha_{1} \cdot \alpha_{2}=\alpha \cdot \left(1 -\frac{1}{\alpha} \right)=\alpha-1.$
	\end{proof}
	
We are now ready to prove the main theorem of this section. The first statement also follows from \cite[Corollary~2.4]{hj} which is stated for bicritical functions of any degree $d\geq 2$ whose critical points are both periodic. The second statement describes the explicit finite levels where each primitive root of unity is defined.  We note that whether these levels are minimum or not depends on the base field. For example when $K=\mathbb{Q}$, $K_n$ cannot contain $\zeta_{2^i}$th roots of unity for $i > \floor{\frac{n+1}{2}}$.
	
	\begin{theorem} \label{unity}
Let $m \geq 1$ be an integer.
\begin{enumerate}
 \item A primitive $2^m$th root of unity, $\zeta_{2^m} $ is in $K_\infty$ and is  equal to a product  of expressions  of the form $\beta_i-1$ and $(\beta_j-1)^{-1}$ for some vertices $\beta_i$ and $\beta_j$. 
 \item Moreover,  $K_n$ contains $\zeta_{2^m}$ for $n=2m-1$. %In particular, if $m$ is the largest integer such that $K$ contains $\zeta_{2^m}$, then $\zeta_{2^{m+1}}$ is contained in $K_2$.
 \end{enumerate}
\end{theorem}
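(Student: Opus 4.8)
The plan is to prove statements (1) and (2) together by a single induction on $m$, using Lemma~\ref{lemma4e} as a ``square-root engine.'' The identity
\[
\beta-1=\left[\frac{1}{(\beta_{1}-1)(\beta_{11}-1)(\beta_{21}-1)}\right]^{2}
\]
says that a square root of $\beta-1$ is a product of factors of the form $(\gamma-1)^{-1}$, where $\gamma$ ranges over the vertex $\beta_1$ (one level below $\beta$) and the vertices $\beta_{11},\beta_{21}$ (two levels below $\beta$). Thus each extraction of a square root of a $2^{m}$th root of unity descends exactly two levels in $T_\infty$, which is precisely the bookkeeping that will produce the relation $n=2m-1$. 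So I would carry, as part of the induction hypothesis, not only that $\zeta_{2^m}$ is a product of expressions $(\beta-1)^{\pm1}$, but also that every vertex $\beta$ occurring has level at most $2m-1$.

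For the base case $m=1$, the two level-$1$ vertices are $\alpha_{1}=1+\tfrac{1}{\sqrt t}$ and $\alpha_{2}=1-\tfrac{1}{\sqrt t}$, so $\alpha_1-1=\tfrac{1}{\sqrt t}$ and $\alpha_2-1=-\tfrac{1}{\sqrt t}$, giving
\[
\zeta_2=-1=(\alpha_2-1)(\alpha_1-1)^{-1}.
\]
This exhibits $\zeta_2$ as a product of the required form involving only vertices of level $\le 1=2\cdot1-1$, so $\zeta_2\in K_1$, establishing both claims for $m=1$.

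For the inductive step, suppose $\zeta_{2^{m}}=\prod_{\beta}(\beta-1)^{\epsilon_\beta}$ with each $\epsilon_\beta\in\{\pm1\}$ and each $\beta$ of level $\le 2m-1$. I would set
\[
W:=\prod_{\beta}\big[(\beta_{1}-1)(\beta_{11}-1)(\beta_{21}-1)\big]^{-\epsilon_\beta}.
\]
Applying Lemma~\ref{lemma4e} to each $\beta$ and squaring gives $W^{2}=\prod_{\beta}(\beta-1)^{\epsilon_\beta}=\zeta_{2^{m}}$. Since $\zeta_{2^m}$ has order exactly $2^m$, any square root of it has order $2^{m+1}$, so $W$ is a \emph{primitive} $2^{m+1}$th root of unity; and $W$ is visibly a product of factors $(\gamma-1)^{\pm1}$ with each $\gamma\in\{\beta_1,\beta_{11},\beta_{21}\}$ of level at most $(2m-1)+2=2(m+1)-1$. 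Choosing $\zeta_{2^{m+1}}:=W$ therefore advances both the product description and the level bound, proving $\zeta_{2^{m+1}}\in K_{2(m+1)-1}$. Part (2) for an arbitrary primitive root then follows because, once $K_{2m-1}$ contains one primitive $2^m$th root of unity, it contains them all as its powers.

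Given that Lemma~\ref{lemma4e} already packages the genuine content, the two points I expect to require the most care are: (a) confirming that a square root of a primitive $2^m$th root of unity is itself primitive of order $2^{m+1}$, so that the product $W$ cannot collapse to a lower-order root; and (b) the level bookkeeping, i.e.\ guaranteeing that the deepest vertices $\beta_{11},\beta_{21}$ appearing sit at level exactly $2m+1$ and never deeper, which is what pins down $n=2m-1$ rather than a larger value. I regard (b) as the main place for off-by-one slips, so I would keep the maximal vertex level an explicit, tracked quantity throughout the induction.
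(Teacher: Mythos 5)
Your proof is correct and follows essentially the same route as the paper: an induction powered by Lemma~\ref{lemma4e}, in which each square-root extraction descends two levels in the tree, yielding the bound $n=2m-1$. The only difference is cosmetic — the paper's base case is an explicit construction of $\zeta_4$ at level $3$ via Lemma~\ref{lemma1}, whereas you start one step earlier with $\zeta_2=-1=(t_2-1)(t_1-1)^{-1}$ at level $1$ (one application of your inductive step then recovers exactly the paper's formula for $\zeta_4$), and your explicit level bookkeeping and primitivity check make part (2) slightly more airtight than the paper's implicit treatment.
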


	\begin{proof} 
	We first show that $K_\infty$ contains $\zeta_4$. This is done by showing that $-1$ is a square in $K_\infty$. In particular, we will see that $-1$ can be written as a square of a product of expressions of the form $\beta_i-1$ and $(\beta_j-1)^{-1}$ for some vertices $\beta_i$ and $\beta_j$. 
	
Consider \begin{equation}\label{m2}
			\left[\prod\limits_{i \in \{1,2\}} \frac{(t_{1i1}-1)}{(t_{2i1}-1)} \frac{t_{11}-1}{t_{21}-1}\right ]^2=\frac{(t_{111}-1)^2(t_{121}-1)^2}{(t_{211}-1)^2(t_{221}-1)^2} \;\; \frac{(t_{11}-1)^2}{(t_{21}-1)^2}
		\end{equation}
		
		Using Lemma \ref{lemma1}, we get that the first fraction is equal to $\frac{\frac{t_1}{t_1-1}}{\frac{t_2}{t_2-1}}$ and using $\left(\frac{1}{t_{i1}-1} \right)^2=t_i$ for the second fraction we get $\frac{t_2}{t_1}.$ Combining these, we obtain that the expression \ref{m2} is equal to $\frac{t_2-t_1}{t_1-1}=\frac{\sqrt{t}}{-\sqrt{t}}=-1$. Therefore, we see that $-1$ is a square in $K_\infty$ and hence $K_\infty$ contains a primitive fourth root of unity $\zeta_4$, which is equal to:
		
	\begin{equation}\label{zeta4}
	\left (\frac{t_{11}-1}{t_{21}-1} \right ) \prod\limits_{i=1}^2 \frac{(t_{1i1}-1)}{(t_{2i1}-1)} .
	\end{equation}	
	
In order to prove the second part of the theorem, we now proceed by induction. Assume that $\zeta_{2^m} $ is in $K_\infty$ and is  equal to the product of $\beta_i-1$ and $(\beta_j-1)^{-1}$ for some vertices $\beta_i$ and $\beta_j$. Then by Lemma \ref{lemma4e}, each term in this product, namely $\beta_i -1$ or $(\beta_j-1)^{-1}$, is of the form $\left [\frac{1}{(\beta_{i1}-1)(\beta_{i11}-1)(\beta_{i21}-1)} \right]^2$ or 
		$\left [\frac{1}{(\beta_{j1}-1)(\beta_{j11}-1)(\beta_{j21}-1)} \right]^{-2}$. This shows that $\zeta_{2^m} $ is a square in $K_\infty$ and therefore a primitive $2^{m+1}$th root of unity, $\zeta_{2^{m+1}} $ is in $K_\infty$and it has the desired form.
		
\end{proof}
		
	\begin{example} In this example we  explicitly show that $K_\infty$ contains a primitive $8$th root of unity. Consider the tree below:

	\scalebox{1.3}{	
			\begin{forest}
				my tree
				[$t$
				[$t_2$, name=t2
				[\color{white}$t_{22}$ , fill=white, circle, scale=0.2, draw
				[$t_{222}$, fill, circle, scale=0.1, draw
				[$t_{v2}$, fill, circle, scale=0.1, draw
				[$t_{v22}$, fill, circle, scale=0.1, draw]
				[$t_{v21}$, fill, circle, scale=0.1, draw]
				]
				[\color{red}$t_{v1}$, fill=red, circle, scale=0.2, draw
				[$t_{v12}$, fill, circle, scale=0.1, draw]
				[$t_{v11}$, fill, circle, scale=0.1, draw]
				]]
				[\color{blue}$t_{221}$, fill=blue, circle, scale=0.2, draw
				[$t_{s2}$, fill, circle, scale=0.1, draw
				[$t_{s22}$, fill, circle, scale=0.1, draw, ]
				[\color{purple}$t_{s21}$, fill=purple, circle, scale=0.2, draw]
				]
				[\color{red}$t_{s1}$, fill=red, circle, scale=0.2, draw
				[$t_{s12}$, fill, circle, scale=0.1, draw]
				[\color{purple}$t_{s11}$, fill=purple, circle, scale=0.2, draw]
				]
				]
				]
				[\color{green}$t_{21}$,  fill =green, circle, scale=0.2, draw
				[$t_{212}$, fill, circle, scale=0.1, draw
				[$t_{n2}$, fill, circle, scale=0.1, draw
				[$t_{n22}$, fill, circle, scale=0.1, draw]
				[$t_{n21}$, fill, circle, scale=0.1, draw]
				]
				[\color{red}$t_{n1}$ , fill=red, circle, scale=0.2, draw
				[$t_{n12}$, fill, circle, scale=0.1, draw]
				[$t_{n11}$, fill, circle, scale=0.1, draw]
				]
				]
				[\color{blue}$t_{211}$ , fill=blue, circle, scale=0.2, draw
				[$t_{m2}$, fill, circle, scale=0.1, draw
				[$t_{m22}$, fill, circle, scale=0.1, draw]
				[\color{purple}$t_{m21}$, fill=purple, circle, scale=0.2, draw]
				]
				[\color{red}$t_{m1}$, fill=red, circle, scale=0.2, draw
				[$t_{m12}$, fill, circle, scale=0.1, draw]
				[\color{purple}$t_{m11}$, fill=purple, circle, scale=0.2, draw]
				]
				]]]
				[$t_1$, name=t1
				[\color{white}$t_{12}$,  fill =white, circle, scale=0.2, draw
				[$t_{122}$, fill, circle, scale=0.1, draw
				[$t_{\ell2}$ , fill, circle, scale=0.1, draw
				[$t_{\ell22}$, fill, circle, scale=0.1, draw]
				[$t_{\ell21}$, fill, circle, scale=0.1, draw]
				]
				[\color{red}$t_{\ell1}$,  fill =red, circle, scale=0.2, draw
				[$t_{\ell12}$, fill, circle, scale=0.1, draw]
				[$t_{\ell11}$, fill, circle, scale=0.1, draw]
				]]
				[\color{blue}$t_{121}$, fill=blue, circle, scale=0.2, draw
				[$t_{k2}$ , fill, circle, scale=0.1, draw
				[$t_{k22}$, fill, circle, scale=0.1, draw]
				[\color{purple}$t_{k21}$, fill=purple, circle, scale=0.2, draw]
				]
				[\color{red}$t_{k1}$ , fill =red, circle, scale=0.2, draw
				[$t_{k12}$, fill, circle, scale=0.1, draw]
				[\color{purple}$t_{k11}$, fill=purple, circle, scale=0.2, draw]
				]
				]
				]
				[\color{green}$t_{11}$, fill =green, circle, scale=0.2, draw
				[$t_{112}$, fill, circle, scale=0.1, draw
				[$t_{j2}$ , fill, circle, scale=0.1, draw
				[$t_{j22}$, fill, circle, scale=0.1, draw]
				[$t_{j21}$, fill, circle, scale=0.1, draw]
				]
				[\color{red}$t_{j1}$ , fill =red, circle, scale=0.2, draw
				[$t_{j12}$ , fill, circle, scale=0.1, draw ]
				[$t_{j11}$ , fill, circle, scale=0.1, draw]
				]
				]
				[\color{blue}$t_{111}$, fill =blue, circle, scale=0.2, draw
				[$t_{i2}$, fill, circle, scale=0.1, draw
				[$t_{i22}$, fill, circle, scale=0.1, draw]
				[\color{purple}$t_{i21}$, fill=purple, circle, scale=0.2, draw]
				]
				[\color{red}$t_{i1}$, fill =red, circle, scale=0.2, draw
				[$t_{i12}$, fill, circle, scale=0.1, draw]
				[\color{purple} $t_{i11}$, fill=purple, circle, scale=0.2, draw] 
				]
				]]]
				]	
			 	\end{forest}}
 
 The equation~\ref{zeta4} describes $\zeta_4$ in terms of the vertices that are marked above by green and blue. Using Lemma~\ref{lemma4e} for each blue vertex $\beta$, we write $(\beta-1)$  as a product of $( \alpha -1)$ where 
 \begin{itemize}
		\item $\alpha$ runs through the red and two purple vertices above and directly connected to it.
 \end{itemize}
 Similarly, using Lemma \ref{lemma4e} for each green vertex $\beta$, we write $(\beta-1)$ as a product of $( \alpha -1)$ where 
 \begin{itemize}
		\item $\alpha$ runs through the blue and two red vertices above and directly connected to it.
 \end{itemize}
 Hence using the marked vertices, we find a primitive $8$th root of unity in $K_{\infty}$.
\end{example}

\section{Discriminant formula for the iterates of $f$}\label{sec:disc}
Let $f(x)$ be a rational map in $k(x)$.  Then, we write $f(x)=g(x)/h(x)$ where $g,h \in k[x]$ are relatively prime as polynomials in $k[x]$.  We define the discriminant of a rational function $g(x)/h(x)-t$ as the discriminant of the polynomial $g(x)-th(x)$ viewed as a polynomial in $x$ in $k(t)$.  In other words,
\[\Delta_x(f(x)-t):=\Delta_x(g(x)-th(x)).\]

In this section, we would like to find a formula for the discriminant of the iterates of the rational function $f(x)=1/(x-1)^2$. We  use this formula in the proof of Theorem~\ref{Frattini index} which is key in the proof of the main theorem.

Now, let $f(x)=\frac{g_1(x)}{h_1(x)}$ then $g_1(x)=1,\; h_1(x)=(x-1)^2$.  Suppose $f^{n}(x)=\frac{g_n(x)}{h_n(x)}$ for $n\geq 1$.  Then
\[
 f^{n}(x)=f(f^{n-1}(x))=\frac{1}{(f^{n-1}(x)-1)^2}=\frac{h_{n-1}^2(x)}{(g_{n-1}(x)-h_{n-1}(x))^2}.
\]

Hence, $g_n(x)=h_{n-1}^2(x)$ and $h_n(x)=(g_{n-1}(x)-h_{n-1}(x))^2.$

The discriminant formula for the iterates of the rational functions are given in \cite{CH} by
\[
\Delta_n:=\Delta_x(g_n(x)-th_n(x))=\pm \frac{l_n^{\epsilon_n+m_n-\delta_n-2}D_n^{\delta_n}}{l(h_n)^{\delta_n-q_n}\Res(g_n,h_n)}\prod_{r\in \cR_{f^{n}}}(g_n(r)-th_n(r))^{m_r}
\]

where $l_n=l_x(g_n(x)-th_n(x))$ and $l_x$ denotes the leading coefficient as a polynomial of $x$.  Furthermore, $m_n=\deg_x (g_n(x)-th_n(x))$,
$D_n=l(h_n(x)g_n'(x)-g_n(x)h_n'(x))$, $\epsilon_n=\deg (h_n(x))$, $\delta_n=\deg (g_n(x))$, 
 and $q_n=\deg(h_n(x)g_n'(x)-g_n(x)h_n'(x))$.  Finally, let
 \[
   \cR_{f^{n}}=\{r\in\overline{k} :(h_ng_n'-g_nh_n')(r)=0\} 
 \]
be the set of ramification points of $f^{n}$  
and $m_r$ be the multiplicity of $r \in \cR_{f^{n}}$.

Note that $\cR_f=\{r\in \overline{k}: -2(r-1)=0\}=\{1\}$ and $m_1=1$.\\
Our main goal in this subsection is to prove Proposition \ref{discriminant} and we  need the lemmas below to complete the proof.
\begin{proposition}\label{discriminant}
	For $n\geq 1$, \;$\Delta_n=c_nt^{a_n}(1-t)^{b_n}$ where $a_n,b_n$ are nonnegative integers and $c_n$ is a power of $2$ up to sign.
\end{proposition}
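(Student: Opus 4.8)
The plan is to extract the shape $t^{a_n}(1-t)^{b_n}$ from the product term of the formula of \cite{CH} and then to show that every remaining factor is a power of $2$. The conceptual input is that $f$ is PCF with $P=\{0,1,\infty\}$, so every critical value of every iterate lies in $P$: if $r\in\cR_{f^{n}}$ then $r$ is a finite critical point of $f^{n}$, so by the chain rule $f^{i}(r)\in C=\{1,\infty\}$ for some $0\le i<n$, and applying $f^{n-i}$ along the cycle $0\to1\to\infty\to0$ gives $f^{n}(r)=g_n(r)/h_n(r)\in\{0,1,\infty\}$. Writing each factor as $g_n(r)-th_n(r)=h_n(r)\bigl(f^{n}(r)-t\bigr)$, it equals, up to a $t$-free constant, $t$ if $f^{n}(r)=0$ (then $g_n(r)=0$), $1-t$ if $f^{n}(r)=1$ (then $g_n(r)=h_n(r)$), and the constant $g_n(r)$ if $f^{n}(r)=\infty$ (then $h_n(r)=0$). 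Hence $\prod_{r\in\cR_{f^{n}}}(g_n(r)-th_n(r))^{m_r}=\kappa_n\,t^{a_n}(1-t)^{b_n}$ for nonnegative integers $a_n,b_n$ and a $t$-free constant $\kappa_n$. Since the only $t$-dependent factors of the formula are this product and the numerator factor $l_n$ raised to the nonnegative power $\epsilon_n+m_n-\delta_n-2$, we also get at once that $\Delta_n\in k[t]$ and that $a_n,b_n\ge 0$.

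Next I would control the leading coefficient $l_n=l_x(g_n-th_n)$. Running an induction on the recursion $g_n=h_{n-1}^2$, $h_n=(g_{n-1}-h_{n-1})^2$ with base data $g_1=1$, $h_1=(x-1)^2$, I would track the degrees $\delta_n,\epsilon_n$ (which oscillate, since the top terms of $g_{n-1}$ and $h_{n-1}$ sometimes cancel, as in $g_2-h_2=2(x-1)^2-1$) together with the leading coefficients of $g_n$, $h_n$, and $g_n-h_n$. Because all these polynomials are perfect squares and the only arithmetic introduced by the recursion is doubling, the leading coefficients come out to be $\pm$ powers of $2$; consequently $l_n$ is always a power of $2$ times one of $1$, $-t$, or $1-t$, the last case occurring exactly when $\delta_n=\epsilon_n$ (e.g. $l_2=1-t$). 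Thus $l_n^{\,\epsilon_n+m_n-\delta_n-2}$ only feeds the allowed monomials $t$ and $1-t$ and a power of $2$ into $\Delta_n$.

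It remains to prove that the $t$-free constant $c_n$, assembled from $\kappa_n$, $D_n^{\delta_n}$, $l(h_n)^{-(\delta_n-q_n)}$, $\Res(g_n,h_n)^{-1}$, and the power-of-$2$ part of $l_n$, is $\pm 2^{e_n}$. The factors $D_n=l(h_ng_n'-g_nh_n')$ and $l(h_n)$ are leading coefficients, handled by the same bookkeeping. The delicate pieces are $\Res(g_n,h_n)$ and $\kappa_n$: I would read $\kappa_n$ as a product over the roots of $h_ng_n'-g_nh_n'$ and hence, up to explicit leading coefficients, as the resultant $\Res_x(g_n-th_n,\,h_ng_n'-g_nh_n')$ with the $t$-part stripped off, and then evaluate both resultants recursively. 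Using multiplicativity of the resultant over the products $g_n=h_{n-1}^2$ and $h_n=(g_{n-1}-h_{n-1})^2$ and the behaviour of $\Res(g_n,h_n)$ under a single application of $f$, the expectation---consistent with the factor of $2$ first appearing in $g_2-h_2=2(x-1)^2-1$---is that each recursion step multiplies these quantities only by explicit powers of $2$, giving $c_n=\pm 2^{e_n}$ by induction.

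The main obstacle is exactly this last step: proving that $\Res(g_n,h_n)$ and the critical-fibre product $\kappa_n$ contribute only powers of $2$. Everything else---the monomial shape $t^{a_n}(1-t)^{b_n}$, the nonnegativity of $a_n,b_n$, and the leading-coefficient factors $l_n$, $D_n$, $l(h_n)$---follows cleanly from the PCF property and a routine induction. If the recursive resultant computation becomes unwieldy, the fallback is a specialization argument: since the $t$-dependence is already pinned down, evaluating $\Delta_n$ at a single convenient $t_0\notin\{0,1\}$ determines $c_n$ up to sign, and choosing $t_0$ so that $g_n-t_0h_n$ factors through the square structure of $g_n$ and $h_n$ keeps that one evaluation tractable.
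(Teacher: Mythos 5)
Your first two steps are sound and coincide with the paper's own argument: the PCF property forces $f^{n}(r)\in\{0,1,\infty\}$ for every $r\in\cR_{f^{n}}$, so the product over ramification points contributes only $t$, $1-t$, and $t$-free constants, and the case analysis of $l_n$ according to whether $\deg g_{n-1}$ is less than, equal to, or greater than $\deg h_{n-1}$ is exactly what the paper does. The gap is in your third step, which is precisely the heart of the proposition: you never prove that the $t$-free constants $g_n(r)$, $h_n(r)$ (hence $\kappa_n$) and $\Res(g_n,h_n)$ are powers of $2$ up to sign; you state it as an ``expectation'' and flag it yourself as the main obstacle. The resultant-multiplicativity recursion you sketch is genuinely problematic: writing $g_n-th_n=h_{n-1}^2-t(g_{n-1}-h_{n-1})^2$ and pushing $\Res_x(h_ng_n'-g_nh_n',\,g_n-th_n)$ down one level forces you to factor $1-t(y-1)^2$, which introduces the two preimages $1\pm 1/\sqrt{t}$ of $t$; the recursion then climbs the preimage tree rather than closing, and the fallback specialization at a single $t_0$ just asks you to evaluate the discriminant of a degree-$2^n$ polynomial, which is the original problem in disguise.

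The missing idea is to induct on the \emph{evaluations} at ramification points rather than on resultants. The Wronskian factors along the dynamics:
\begin{equation*}
h_ng_n'-g_nh_n'=2\,(g_{n-1}-h_{n-1})\,h_{n-1}\,\bigl(g_{n-1}h_{n-1}'-h_{n-1}g_{n-1}'\bigr),
\end{equation*}
so $\cR_{f^{n}}$ consists of $\cR_{f^{n-1}}$ together with the roots of $g_{n-1}-h_{n-1}$ and of $h_{n-1}$. Combining this with $g_n=h_{n-1}^2$, $h_n=(g_{n-1}-h_{n-1})^2$ and the PCF property (which pins down $f^{n-1}(r)$, so that $g_{n-1}(r)-h_{n-1}(r)$ equals $0$, $\pm h_{n-1}(r)$, or $\pm g_{n-1}(r)$), a short induction shows directly that $g_n(r)$ and $h_n(r)$ are powers of $2$ up to sign for all $r\in\cR_{f^{n}}$; this is the paper's Lemma~\ref{ramificationpoints}, and it disposes of $\kappa_n$ at once. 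It also disposes of the resultant with no extra work: by the product formula, $\Res(g_n,h_n)=\pm\, l(h_n)^{\delta_n}\prod_j g_n(\theta_j)$ over the roots $\theta_j$ of $h_n$, and each such $\theta_j$ is itself a ramification point of $f^{n}$ (because $f^{-1}(\infty)=\{1\}$ is critical), so the same lemma applies (Lemma~\ref{resultant}); finally $D_n=2\,l(h_{n-1})\,l(g_{n-1}-h_{n-1})\,D_{n-1}$ follows from the displayed factorization, and the leading coefficients are your step two (Lemma~\ref{lglh}). With that lemma in hand your outline closes; without it, the proposition is not proved.
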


\begin{proof}  First of all, we would like to find the finite primes of $\overline{k}(t)$ dividing $\Delta_n$ and these terms can come from $\prod_{r\in \cR_{f^{n}}}(g_n(r)-th_n(r))^{m_r}$ and $l_n$.
	
Since the only branch points of of $f^{n}$ are $\{0,1,\infty\}$, $f^{n}(r)=\displaystyle\frac{g_n(r)}{h_n(r)}=0,1,\mbox{ or } \infty$ for any $r\in \cR_{f^{n}}$.  

Now, if $f^{n}(r)=0$ then $g_n(r)=0$ and therefore we get a term $[(-t)h_n(r)]^{m_r}$.
If $f^{n}(r)=\infty$, then $h_n(r)=0$ and so we get a term $g_n(r)^{m_r}$.
Finally, if $f^{n}(r)=1$ then $g_n(r)=h_n(r)$ and hence we have $(1-t)^{m_r}h_n(r)^{m_r}$.
		
Now we analyze $l_n=l_x(g_n(x)-th_n(x))=l_x(h_{n-1}^2(x)-t(g_{n-1}(x)-h_{n-1}(x))^2)$. We have 3 cases.\\ 

\textbf{Case 1:} $\deg (g_{n-1}(x))<\deg(h_{n-1}(x))$.  Then, $l_n=l_x(h_{n-1}^2(x)-th_{n-1}^2(x))=l_x(h_{n-1}^2(x))(1-t)$.\\

\textbf{Case 2:} $\deg (g_{n-1}(x))=\deg(h_{n-1}(x))$. We have $l_n=l_x(h_{n-1}^2(x))$.\\
 
\textbf{Case 3:} $\deg (g_{n-1}(x))>\deg(h_{n-1}(x))$.  Here, we obtain $l_n=-tl_x(g_{n-1}^2(x))$.\\

Therefore, we get $\Delta_n=c_nt^{a_n}(1-t)^{b_n}$ where $a_n$ and $b_n$ can be calculated. 
\end{proof}
 In Lemma \ref{lglh}, Lemma \ref{ramificationpoints} and Lemma \ref{resultant}, we prove that $l(h_n), l(g_n), D_n, \Res(g_n,h_n)$ and $g_n(r), h_n(r)$ for $r\in \cR_f$ are all powers of 2 up to sign which imply $c_n$ is a power of $2$ up to sign.

\begin{lemma}\label{lglh}
	For $n\geq 4$, 
	\[
	l(g_n)=\begin{cases}  2^4l(g_{n-3})^4l(h_{n-3})^4  & \text{ if } n\equiv 1 \pmod 3,\\
	                      l(g_{n-3})^8  &\text{ if } n\equiv 2 \pmod 3, \\
	                      l(g_{n-3})^8  &\text{ if } n\equiv 0 \pmod 3, 
	\end{cases}
	\]
and for $n\geq 3$

  	\[
  l(h_n)=\begin{cases}  2^2l(g_{n-2})^2l(h_{n-2})^2  & \text{ if } n\equiv 0 \pmod 3,\\
  l(g_{n-2})^4  &\text{ if } n\equiv 1 \pmod 3, \\
  l(g_{n-2})^4  &\text{ if } n\equiv 2 \pmod 3, 
  \end{cases}
  \]	
	
\end{lemma}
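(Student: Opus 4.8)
\emph{The backbone.} The two recursions just established, $g_n=h_{n-1}^2$ and $h_n=(g_{n-1}-h_{n-1})^2$, give immediately $l(g_n)=l(h_{n-1})^2$ and $\deg g_n=2\deg h_{n-1}$; in particular the formulas for $l(g_n)$ will follow \emph{formally} from those for $l(h_n)$ once the appropriate $h$-value is substituted for $l(h_{n-1})$. Thus the real content is the leading coefficient of $P_{n-1}:=g_{n-1}-h_{n-1}$, since $l(h_n)=l(P_{n-1})^2$. The key device I would use is the difference-of-squares factorization
\[
P_n:=g_n-h_n=h_{n-1}^2-(g_{n-1}-h_{n-1})^2=g_{n-1}\,\bigl(2h_{n-1}-g_{n-1}\bigr),\qquad n\ge 2,
\]
which exhibits the leading term of $P_n$ as a product of two factors whose leading terms are individually transparent.

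\emph{Step 1: the period-three degree pattern.} Writing $\delta_n=\deg g_n$ and $\epsilon_n=\deg h_n$, I would first prove by induction that $\delta_n<\epsilon_n$, $\delta_n=\epsilon_n$, and $\delta_n>\epsilon_n$ according as $n\equiv 1,2,0\pmod 3$, together with $\delta_n=2\epsilon_{n-1}$ and $\epsilon_n=2\deg P_{n-1}$, where $\deg P_{n-1}$ equals $\epsilon_{n-1}$ in the case $\delta_{n-1}<\epsilon_{n-1}$, equals $\delta_{n-1}$ in the case $\delta_{n-1}>\epsilon_{n-1}$, and equals $\delta_{n-2}+\epsilon_{n-2}$ in the equality case (this last value coming from the factorization above). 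The base cases $n=1,2,3$ are checked by hand, and the inductive step is a direct verification of the three comparisons.

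\emph{Step 2: the leading coefficients.} With the pattern in hand I would evaluate $l(h_n)=l(P_{n-1})^2$ in each residue class. When $n\equiv 2$ we are in the strict case $\delta_{n-1}<\epsilon_{n-1}$, so the leading term of $P_{n-1}$ comes from $-h_{n-1}$ and $l(h_n)=l(h_{n-1})^2$; inserting the value $l(h_{n-1})=l(g_{n-2})^2$ produced by the $n-1\equiv 1$ ($>$) evaluation gives $l(h_n)=l(g_{n-2})^4$. When $n\equiv 1$ we are in the case $\delta_{n-1}>\epsilon_{n-1}$, the leading term of $P_{n-1}$ comes from $g_{n-1}$, and $l(h_n)=l(g_{n-1})^2=l(h_{n-2})^4$; this equals $l(g_{n-2})^4$ once one knows the auxiliary identity $l(g_m)=l(h_m)$ for $m\equiv 2$, which itself drops out of the $n\equiv 2$ computation. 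Finally $l(g_n)=l(h_{n-1})^2$ converts each $h$-formula into the stated $g$-formula, the $2^4$ and the fourth powers appearing precisely when $n\equiv 1$ because there $l(h_{n-1})$ is governed by the $\equiv 0$ line that carries the factor $2^2$.

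\emph{The main obstacle.} The one delicate point is the equality case, equivalently $\delta_{n-1}=\epsilon_{n-1}$ with $n\equiv 0\pmod 3$, where the leading coefficients of $g_{n-1}$ and $h_{n-1}$ coincide and cancel in $P_{n-1}=g_{n-1}-h_{n-1}$, so that its leading term is invisible from $g_{n-1}$ and $h_{n-1}$ directly. This is exactly where the factorization pays off: applying it one step earlier writes $P_{n-1}=g_{n-2}\,(2h_{n-2}-g_{n-2})$, and since $n-2\equiv 1\pmod 3$ places us in the strict inequality $\delta_{n-2}<\epsilon_{n-2}$, neither factor loses degree, the leading term of $2h_{n-2}-g_{n-2}$ is $2\,l(h_{n-2})$, and hence $l(P_{n-1})=2\,l(g_{n-2})\,l(h_{n-2})$. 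Squaring yields $l(h_n)=2^2\,l(g_{n-2})^2\,l(h_{n-2})^2$, the $n\equiv 0$ formula and the source of the stray factor of $2$. With this case resolved, the argument closes by a single strong induction carrying the degree pattern, both leading-coefficient formulas, and the auxiliary equality $l(g_m)=l(h_m)$ for $m\equiv 2$.
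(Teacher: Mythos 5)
Your proof is correct and takes essentially the same route as the paper's: both establish the period-three degree pattern $\deg g_n \lessgtr \deg h_n$ by induction and then evaluate leading coefficients through the identity $g_m-h_m=2g_{m-1}h_{m-1}-g_{m-1}^2$ (your factored form $g_{m-1}(2h_{m-1}-g_{m-1})$ is the same identity) with a case analysis on the degree comparison. The only difference is one of care, not of method: you make explicit the auxiliary identity $l(g_m)=l(h_m)$ for $m\equiv 2 \pmod 3$ and the degree drop caused by cancellation in that case, which the paper uses implicitly when it asserts $l(-g_m^2+2g_mh_m)=l(g_m)^2$ for $m\equiv 2 \pmod 3$.
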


\begin{proof} We begin with by comparing the degrees of $g_n$ and $h_n$.
	
	Since $g_1(x)=1$, $h_1(x)=(x-1)^2$ we have $\deg g_1<\deg h_1$ and 
	$g_2(x)=(x-1)^4$, $h_2(x)=(1-(x-1)^2)^2$ give $\deg g_2=\deg h_2$.
	
Then	$g_3(x)=(1-(x-1)^2)^4$, $h_3(x)=\big[(x-1)^4\big(1-(x-1)^2\big)^2\big]^2$, $\deg h_3 < \deg g_3$.  Furthermore, $g_4=h_3^2$, $h_4=(g_3-h_3)^2$ imply $\deg g_4=2\deg h_3$ and $\deg h_4=2\deg g_3$.  Hence, we obtain $\deg g_4< \deg h_4$.  We can prove by induction that
  	
	\begin{align}\label{degree}
	               \deg g_n < \deg h_n \;  & \text{ if } n\equiv 1 \pmod 3, \nonumber\\
	               \deg g_n= \deg h_n\; &\text{ if } n\equiv 2 \pmod 3, \\  
	               \deg g_n >\deg h_n\;  &\text{ if } n\equiv 0 \pmod 3, \nonumber
	\end{align}
 
 Now, $l(g_n)=l(h_{n-1})^2=\big(l(g_{n-2}-h_{n-2})\big)^4$	and
 $l(g_{n-2}-h_{n-2})=l\big(h_{n-3}^2-l(g_{n-3}-h_{n-3})^2\big)=l(-g_{n-3}^2+2g_{n-3}h_{n-3})$ which give $l(g_n)=l(-g_{n-3}^2+2g_{n-3}h_{n-3})^4$.
 
 By comparing the degrees of $g_m$ and $h_m$ using the equation \ref{degree}, we get
 
 	\[
 l(-g_{m}^2+2g_{m}h_{m})=\begin{cases}  2l(g_{m})l(h_{m})  & \text{ if } m\equiv 1 \pmod 3,\\
 l(g_{m})^2  &\text{ if } m\equiv 2 \pmod 3, \\
 -l(g_{m})^2  &\text{ if } m\equiv 0 \pmod 3, 
 \end{cases}
 \]
 giving us the formula for $l(g_n)$ in Lemma \ref{lglh}.  For $n=4$, $l(g_4)=2^4l(g_1)^4l(h_1)^4=2^4$.\\
 
 In a very similar fashion, we can compute $l(h_n):$
 \begin{align*}
            l(h_n)&=l\big((g_{n-1}-h_{n-1})^2\big)\\
                  &=l\big(h_{n-2}^2-(g_{n-2}-h_{n-2})^2\big)^2\\
                  &=l\big(2g_{n-2}h_{n-2}-g_{n-2}^2\big)^2
 \end{align*}
 
  By comparing the degrees of $g_m$ and $h_m$ using the equation \ref{degree}, we get
 
 \[
    l\big(2g_{m}h_{m}-g_{m}^2\big)^2=\begin{cases}  \big(2l(g_{m})l(h_{m})\big)^2  & \text{ if } m\equiv 1 \pmod 3,\\
    l(g_{m})^4  &\text{ if } m\equiv 2 \pmod 3, \\
    l(g_{m})^4  &\text{ if } m\equiv 0 \pmod 3, 
 \end{cases}
 \]
 giving us the formula for $l(h_n)$ in Lemma \ref{lglh}.  For example, $l(g_1)=l(h_1)=1$ and $l(h_3)=2^2=4$.  Note that $h_3(x)=\big((x-1)^4-(1-(x-1)^2)^2\big)^2=\big(2(x-1)^2-1\big)^2$.\\
	
\end{proof}

\begin{lemma}\label{ramificationpoints}
	For  $r \in \cR_{f^{n}}$,\; $g_n(r)$ and $h_n(r)$ are powers of $2$ up to sign.
	
\end{lemma}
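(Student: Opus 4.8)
The plan is to follow the forward orbit of a ramification point and reduce the entire statement to a single ``entry value''. Recall that the critical points of $f$ are $\{1,\infty\}$, so $r\in\cR_{f^{n}}$ exactly when $f^{j}(r)\in\{1,\infty\}$ for some $0\le j\le n-1$. Since $1$ is the only finite point with $f(\cdot)=\infty$, a minimal such $j$ with $f^{j}(r)=\infty$ would force $f^{j-1}(r)=1$, contradicting minimality; hence for a \emph{finite} ramification point the orbit first meets $\{1,\infty\}$ at the value $1$. Let $j_0=j_0(r)$ be the least index with $f^{j_0}(r)=1$ (so $j_0=0$ for $r=1$). For every $m\ge j_0$ the image $f^{m}(r)$ then runs around the $3$-cycle $1\to\infty\to 0$, so $g_m(r)/h_m(r)\in\{1,\infty,0\}$; in particular one of $g_m(r),h_m(r)$ vanishes unless $f^{m}(r)=1$, in which case $g_m(r)=h_m(r)$.

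Writing $w:=g_{j_0}(r)=h_{j_0}(r)$, I would feed the recursions $g_n=h_{n-1}^2$ and $h_n=(g_{n-1}-h_{n-1})^2$ around this cycle. A three-step computation gives
\begin{align*}
(g_{j_0}(r),h_{j_0}(r))&=(w,w),\\
(g_{j_0+1}(r),h_{j_0+1}(r))&=(w^{2},0),\\
(g_{j_0+2}(r),h_{j_0+2}(r))&=(0,w^{4}),\\
(g_{j_0+3}(r),h_{j_0+3}(r))&=(w^{8},w^{8}),
\end{align*}
and an easy induction shows that for all $m\ge j_0$ the nonzero member of $\{g_m(r),h_m(r)\}$ equals $w^{2^{\,m-j_0}}$ up to sign. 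Consequently $g_n(r)$ and $h_n(r)$ are powers of $2$ up to sign as soon as $w$ is, so the lemma is reduced to the single assertion that the entry value $w$ is a power of $2$ up to sign.

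To handle $w$, I would use that the orbit is ``fresh'' before $j_0$: the predecessor is the non-postcritical preimage of $1$, i.e.\ $f^{j_0-1}(r)=2$, the only exception being $r=0$ (where $f^{0}(0)=0$ and $w=g_1(0)=1$). Thus $g_{j_0-1}(r)=2\,h_{j_0-1}(r)$, whence
\[
w=g_{j_0}(r)=h_{j_0-1}(r)^{2}=\bigl(g_{j_0-1}(r)-h_{j_0-1}(r)\bigr)^{2}=h_{j_0}(r).
\]
So everything comes down to showing that $h_{m}(r)$ is a power of $2$ up to sign whenever $f^{m}(r)=2$. For the ramification points on the critical orbit itself, in particular $r\in\cR_f=\{1\}$ together with the integer points $0,2$ of its orbit, one checks directly that $w=1$ and the claim is immediate; these furnish the base of the induction.

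The genuine obstacle is the inductive step controlling $h_{m}(r)$ at the deeper preimages of $2$. Here I would induct on $m$ through $h_{m}=(g_{m-1}-h_{m-1})^{2}$, tracking how the coefficient $2$ is produced by the systematic top-degree cancellation in $g_{m-1}-h_{m-1}$ (already visible in $g_2-h_2=2(x-1)^2-1$), which is precisely the mechanism responsible for the powers of $2$ in Lemma~\ref{lglh}. The delicate point is that an individual value $h_m(r)$ is a genuine algebraic number, so its $2$-power nature must be extracted from the divisibility pattern of the recursion along the whole tower of preimages of $2$, rather than from any naive estimate; making this propagation precise and uniform in $m$ is where the bulk of the work lies and is the step I expect to be hardest.
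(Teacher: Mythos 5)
Your reduction is sound, and it is in fact more careful than the paper's own argument: the paper proves the lemma by a short induction on $n$ through the recursions $g_n=h_{n-1}^2$, $h_n=(g_{n-1}-h_{n-1})^2$, using only the base cases $n\le 2$ and the fact that $f^{n-1}(r)\in\{0,1,\infty\}$ for $r\in\cR_{f^{n}}$, under the induction hypothesis ``for any $m<n$ and any $r\in\cR_{f^{n}}$, $g_m(r)$ and $h_m(r)$ are powers of $2$ up to sign.'' That hypothesis quantifies over all of $\cR_{f^{n}}$, including ramification points that first appear at level $n$, so it is not delivered by the conclusion of earlier stages: it silently assumes exactly the control of the entry value $w=h_{j_0-1}(r)^2$ at preimages of $2$ that your $j_0$-bookkeeping isolates. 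So your proposal correctly pinpoints the crux that the paper's proof elides, and everything you actually verify (the first meeting of the orbit with $\{1,\infty\}$ occurs at the value $1$, the $r=0$ exception, the three-step cycle computation, $w=g_{j_0}(r)=h_{j_0}(r)$) is correct.

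However, the step you defer is not merely hard: as you have formulated it, it is false, so no amount of coefficient-tracking in the recursion can close the argument along your route. Take $r$ with $f^{2}(r)=2$, say $(r-1)^2=2-\sqrt{2}$ (one of four $\QQ$-conjugates). Then $h_1(r)=2-\sqrt{2}$, $h_2(r)=(\sqrt{2}-1)^2=3-2\sqrt{2}$, and $w=g_3(r)=h_3(r)=(\sqrt{2}-1)^4=17-12\sqrt{2}$, none of which is $\pm 2^k$. Since $f^{3}(r)=1$, this $r$ lies in $\cR_{f^{4}}$, and $g_4(r)=w^2=(\sqrt{2}-1)^8$, a unit of infinite order in $\ZZ[\sqrt{2}]^{\times}$ --- so even the literal pointwise statement of the lemma fails at irrational ramification points (and already for rational ones you must allow negative exponents, e.g.\ $h_1(r)=\tfrac12$ when $f(r)=2$). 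What is true, and all that the discriminant computation in Proposition~\ref{discriminant} and Lemma~\ref{resultant} actually uses, is the Galois-aggregated version: $\cR_{f^{n}}$ is stable under conjugation, and the product of the relevant nonzero values over each orbit is $\pm 2^k$ --- in the example above, $\prod g_4(r)=\bigl((3-2\sqrt{2})(3+2\sqrt{2})\bigr)^4=1$. So your proposed ``propagation along the tower of preimages of $2$'' must be run at the level of norms or ideals rather than individual algebraic numbers: for instance $\prod_{f^{m}(r)=2}h_m(r)$ equals $\Res(g_m-2h_m,h_m)$ up to leading coefficients, and such resultants can be handled inductively from the recursion together with the leading-coefficient formulas of Lemma~\ref{lglh}. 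With that reformulation your skeleton becomes a complete (and arguably a corrected) proof; without it, both your final step and the paper's induction hypothesis assert something that is pointwise false.
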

\begin{proof} Note that for $n=1$, $\cR_f=\{r\in \overline{k}: -2(r-1)=0\}=\{1\}$ and $g_1(1)=1$ and $h_1(1)=0$.  Now we compute $h_ng_n'-g_nh_n'$ where $g_n=h_{n-1}^2$ and $h_n=(g_{n-1}-h_{n-1})^2$.
	\begin{align}\label{res.eq.}
	  h_ng_n'-g_nh_n'&=2(g_{n-1}-h_{n-1})^2h_{n-1}h_{n-1}'-2h_{n-1}^2(g_{n-1}-h_{n-1})(g_{n-1}'-h_{n-1}')\nonumber\\
	                &=2(g_{n-1}-h_{n-1})h_{n-1}[(g_{n-1}-h_{n-1})h_{n-1}'-h_{n-1}(g_{n-1}'-h_{n-1}')]\\
	                &=2(g_{n-1}-h_{n-1})h_{n-1}[g_{n-1}h_{n-1}'-h_{n-1}g_{n-1}']\nonumber
	\end{align}
	
	For $n=2$ since $g_2=(x-1)^4$, $h_2(x)=(1-(x-1)^2)^2$ we get $\cR_f=\{0,1,2\}$ from the above computation.  Then $g_1(0)=g_1(1)=g_1(2)=1$ and $h_1(0)=1$, $h_1(1)=0$, $h_1(2)=1$. Also, $g_2(0)=h_1(0)^2=1$, $g_2(1)=h_1(1)^2=0$, $g_2(2)=h_1(2)^2=1$ and $h_2(r)=(g_1(r)-h_1(r))^2=0$ or $1$ for $r\in \cR_f=\{0,1,2\}$.  Now assume that for any $m<n$ and any $r\in \cR_{f^{n}}$, $g_m(r)$ and $h_m(r)$ are powers of $2$ up to sign.  Since $g_n=h_{n-1}^2$ then $g_n(r)$ is a power of $2$ up to sign by induction.  On the other hand, since the only branch points of $f^{n}$ are $0,1,\infty$ and $h_n(r)=(g_{n-1}(r)-h_{n-1}(r))^2=(g_{n-1}(r))^2$ or $(h_{n-1}(r))^2$ or $0$ and the result follows by induction. 
\end{proof}

\begin{lemma}\label{resultant}
	For $n\geq 1,\; \Res(g_n,h_n)$ and $D_n=l(h_ng_n'-g_nh_n')$ are powers of $2$ up to sign.
\end{lemma}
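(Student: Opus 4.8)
The plan is to prove both assertions simultaneously by induction on $n$, leaning on the recursions $g_n=h_{n-1}^2$ and $h_n=(g_{n-1}-h_{n-1})^2$, the leading-coefficient formulas of Lemma~\ref{lglh}, and the degree comparisons~\eqref{degree}. The base cases are checked directly: for instance $\Res(g_1,h_1)=\Res(1,(x-1)^2)=1$, $\Res(g_2,h_2)=h_2(1)^4=1$, and $D_1=l(-2(x-1))=-2$. All the content is in the inductive step, which I handle for $D_n$ and for $\Res(g_n,h_n)$ in turn.

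For $D_n$, I would start from the factorization already recorded in~\eqref{res.eq.}, which exhibits $h_ng_n'-g_nh_n'$ as a product of $h_{n-1}$, $g_{n-1}-h_{n-1}$, a factor $2$, and the level-$(n-1)$ expression $h_{n-1}g_{n-1}'-g_{n-1}h_{n-1}'$. Since the leading coefficient of a product is the product of the leading coefficients, taking $l(\cdot)$ gives
\[ D_n=-2\,l(g_{n-1}-h_{n-1})\,l(h_{n-1})\,D_{n-1}. \]
By Lemma~\ref{lglh}, $l(h_{n-1})$ is a power of $2$, and by induction $D_{n-1}$ is a power of $2$ up to sign, so the problem reduces to showing $l(g_{n-1}-h_{n-1})$ is a power of $2$ up to sign. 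For this I would use the difference-of-squares factorization
\[ g_n-h_n=h_{n-1}^2-(g_{n-1}-h_{n-1})^2=g_{n-1}\,(2h_{n-1}-g_{n-1}), \]
so that $l(g_n-h_n)=l(g_{n-1})\,l(2h_{n-1}-g_{n-1})$, and then split into the residue classes of $n-1$ modulo $3$ using~\eqref{degree}: when $\deg g_{n-1}\neq\deg h_{n-1}$ the leading term of $2h_{n-1}-g_{n-1}$ is $2l(h_{n-1})$ or $-l(g_{n-1})$, and when $\deg g_{n-1}=\deg h_{n-1}$ one has $l(g_{n-1})=l(h_{n-1})$, so the leading coefficient is $2l(h_{n-1})-l(g_{n-1})=l(h_{n-1})\neq 0$. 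In every case $l(g_n-h_n)$ is a power of $2$ up to sign by Lemma~\ref{lglh}.

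For $\Res(g_n,h_n)$, multiplicativity of the resultant together with $g_n=h_{n-1}^2$ and $h_n=(g_{n-1}-h_{n-1})^2$ gives $\Res(g_n,h_n)=\Res\!\big(h_{n-1},\,g_{n-1}-h_{n-1}\big)^4$. Writing $R_m:=\Res(h_m,g_m-h_m)$ and evaluating the resultant over the roots $\beta$ of $h_{n-1}$, where $g_{n-1}(\beta)-h_{n-1}(\beta)=g_{n-1}(\beta)$, yields
\[ R_{n-1}=l(h_{n-1})^{\deg(g_{n-1}-h_{n-1})-\deg g_{n-1}}\,\Res(h_{n-1},g_{n-1}). \]
A second application of multiplicativity together with $\Res(a,b)=\pm\Res(b,a)$ gives $\Res(h_{n-1},g_{n-1})=\Res(g_{n-2}-h_{n-2},h_{n-2})^4=R_{n-2}^4$, the sign being killed by the fourth power. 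Hence $R_{n-1}=l(h_{n-1})^{e_{n-1}}R_{n-2}^4$ for the exponent $e_{n-1}=\deg(g_{n-1}-h_{n-1})-\deg g_{n-1}$ read off from~\eqref{degree}. Since $l(h_{n-1})$ is a power of $2$ and $R_{n-2}$ is one by induction (with $R_1=1$ seeding the recursion), $R_{n-1}$ is a power of $2$ up to sign, and therefore so is $\Res(g_n,h_n)=R_{n-1}^4$.

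The main obstacle is the equal-degree case $m\equiv 2\pmod 3$, where the leading coefficients of $g_m$ and $h_m$ coincide and cancel in $g_m-h_m$. Controlling this needs the auxiliary fact $l(g_m)=l(h_m)$ for $m\equiv 2\pmod 3$, which I would establish from $g_m=h_{m-1}^2$, $h_m=(g_{m-1}-h_{m-1})^2$ and the case $m-1\equiv 1\pmod 3$ of~\eqref{degree}: there $\deg g_{m-1}<\deg h_{m-1}$ gives $l(g_{m-1}-h_{m-1})=-l(h_{m-1})$, whence $l(h_m)=l(h_{m-1})^2=l(g_m)$. A secondary, bookkeeping-level subtlety is that $e_{n-1}$ is negative in this same case, so the intermediate $R_{n-1}$ carries a negative power of $l(h_{n-1})$; this is harmless because $\Res(g_n,h_n)$ is the resultant of integer polynomials, hence an integer, forcing the final value to be a genuine nonnegative power of $2$ up to sign.
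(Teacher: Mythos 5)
Your proof is correct, but the resultant half takes a genuinely different route from the paper, so a comparison is in order. The $D_n$ half coincides with the paper's argument: both start from the factorization in Equation \ref{res.eq.}, take leading coefficients, and induct. You are in fact more careful than the paper here, which simply asserts that ``each term is a power of $2$ up to sign,'' leaving the key input $l(g_{n-1}-h_{n-1})=\pm 2^k$ implicit in the proof of Lemma~\ref{lglh} (there the same quantity appears expanded as $l(-g_{m}^2+2g_{m}h_{m})$; your $g_{m}(2h_{m}-g_{m})$ is its factored form, and your auxiliary fact $l(g_m)=l(h_m)$ for $m\equiv 2\pmod 3$ is exactly what the paper uses tacitly in its equal-degree case). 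For $\Res(g_n,h_n)$, however, the paper argues dynamically: writing the resultant as $(-1)^{\delta_n\epsilon_n}l(h_n)^{\delta_n}\prod_j g_n(\theta_j)$ over the roots $\theta_j$ of $h_n$, it observes that $f^{-1}(\infty)=\{1\}$ forces $f^{n-1}(\theta_j)=1$, so each $\theta_j$ is a ramification point of $f^{n}$, and then quotes Lemma~\ref{ramificationpoints} to control the values $g_n(\theta_j)$. You instead stay entirely inside elimination theory: multiplicativity gives $\Res(g_n,h_n)=\Res(h_{n-1},g_{n-1}-h_{n-1})^4$, and the recursion $R_m=\pm\, l(h_m)^{e_m}R_{m-1}^4$ closes the induction using only Lemma~\ref{lglh}, the degree comparisons \ref{degree}, and integrality of resultants of integer polynomials to rule out negative exponents. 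The paper's route is shorter because Lemma~\ref{ramificationpoints} has already done the arithmetic work, and it reflects the geometry of the branch points; your route is independent of Lemma~\ref{ramificationpoints} and of any dynamical input, and it yields as a byproduct that $\Res(g_n,h_n)\neq 0$ (propagating from $R_1=1$), i.e.\ that $g_n$ and $h_n$ remain coprime --- a fact the paper assumes throughout without comment.
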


\begin{proof}  The resultant of $g_n$ and $h_n$ is defined by
	\[
	  \Res(g_n,h_n)=(-1)^{\delta_n \epsilon_n} l(h_n)^{\delta_n}\prod_{j=1}^{\epsilon_n }g_n(\theta_j)
	\]
	where $\epsilon_n=\deg (h_n(x))$, $\delta_n=\deg (g_n(x))$ and $\theta_j$'s are the roots of $h_n$.  Now $h_n(\theta_j)=0$ gives $f^{n}(\theta_j)=0$.  Since $f^{-1}(\infty)=\{1\}$, $f^{(n-1)}(\theta_j)=1$ and $1$ is a ramification point and hence $\theta_j$ is also a ramification point i.e. $\theta_j\in\cR_{f^{n}}$.  Therefore, by Lemma \ref{ramificationpoints} $g_n(\theta_j)$ is a power of $2$ up to sign.  We already showed that $l(h_n)$ is a power of $2$ up to sign in Lemma \ref{lglh} and so the result follows for $\Res(g_n,h_n)$.  
	
	Let us prove $D_n=l(h_ng_n'-g_nh_n')$ is also a power of $2$ up to sign.  For $n=1$ and $n=2$, $D_1=l(-2(x-1))=-2$ and $D_2=l(4(x-1)^3(1-(x-1)^2))$.  Assume for induction $D_{n-1}$ is a power of $2$ up to sign.  From the Equation \ref{res.eq.}
	\[
	  D_n=2l(h_{n-1})l(g_{n-1}-h_{n-1})D_{n-1}.
	\]
	
	Each term in $D_n$ is a power of $2$ up to sign and therefore the result follows.

\end{proof}	

\section{The proof of Theorem 1.1}\label{sec:main}
In this section we prove some explicit results on the specialization of the map $f(x)=\frac{1}{(x-1)^2}$. For any $n\geq 1$,
Hilbert’s Irreducibility Theorem implies that there exists a non-empty Zariski-open set $\mathcal{H}_n=\mathcal{H}_n(f) \subseteq
  \PP^1(k)$, called a \emph{Hilbert set}, such that specializing the parameter $t$ to $a\in \mathcal{H}_n$ does not change the Galois
 group. However the existence of elements that are in $\mathcal{H}_n$ for all $n$ is not guaranteed by Hilbert Irreducibility. In this section we determine explicit elements $a\in
  \mathcal{H}_n(f)$ for all $n$.

 We use the next proposition, whose proof can be found in \cite{arithmeticbasilica}, to show Theorem \ref{Frattini index}.
  \begin{proposition}\label{prop:Frattini}\cite[Proposition 5.2]{arithmeticbasilica}
Let $G$ be a finite or profinite group. Let $\Phi$ be the Frattini subgroup of $G$. Let $H$ be a subgroup of $G$ such that $\Phi H=G$, then $H=G$.
\end{proposition}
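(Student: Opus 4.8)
The plan is to argue by contradiction, exploiting the defining property of the Frattini subgroup as the intersection of all maximal (proper) subgroups of $G$, together with the fact that $\Phi$ is normal in $G$ so that the product $\Phi H$ is genuinely a subgroup. The whole proof comes down to locating a single maximal subgroup that swallows $\Phi H$: if $H \neq G$ one produces a maximal $M \subsetneq G$ containing $H$, observes that $\Phi \subseteq M$ automatically, and concludes $\Phi H \subseteq M \subsetneq G$, contradicting $\Phi H = G$.

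First I would dispose of the finite case, which is the conceptual core. Suppose $H$ is a proper subgroup of the finite group $G$. Then $H$ is contained in some maximal proper subgroup $M \subsetneq G$. Since $\Phi$ is by definition the intersection of all maximal subgroups, we have $\Phi \subseteq M$ in particular. As $M$ is a subgroup containing both $\Phi$ and $H$, it contains the product set $\Phi H$, and therefore $G = \Phi H \subseteq M \subsetneq G$, which is absurd. Hence $H = G$.

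For the profinite case I would reduce to the finite case by passing to finite quotients, here taking $\Phi$ to be the intersection of all maximal \emph{open} subgroups and $H$ to be closed. Using that $H = \bigcap_N HN$, with $N$ ranging over the open normal subgroups of $G$, properness of $H$ forces $HN \neq G$ for at least one such $N$. In the finite quotient $G/N$ the image of $H$ is a proper subgroup, hence lies in a maximal subgroup $M/N$; pulling back, $M$ is a maximal open subgroup of $G$ with $H \subseteq M$, and again $\Phi \subseteq M$, so the finite argument applies verbatim to give the contradiction $G = \Phi H \subseteq M$.

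I expect the only real obstacle to be the profinite bookkeeping rather than any genuine difficulty: one must consistently work with closed subgroups and maximal open subgroups, and verify the standard facts that the Frattini subgroup of a profinite group is the intersection of its maximal open subgroups and that a proper closed subgroup is separated from $G$ by some open normal subgroup. Once these are in hand the finite-case contradiction transfers directly, and the finite case itself is immediate from the maximality of $M$ and the normality of $\Phi$.
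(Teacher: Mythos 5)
Your proof is correct. Note, however, that the paper itself offers no proof of this proposition to compare against: it is imported verbatim from \cite[Proposition~5.2]{arithmeticbasilica}, with the authors explicitly deferring to that reference. Your argument is the standard one for this fact (every proper subgroup lies in a maximal one, every maximal one contains $\Phi$, hence $\Phi H$ cannot be all of $G$), and your profinite reduction via finite quotients is sound: the facts you flag as needing verification --- that a proper closed subgroup $H$ satisfies $HN\neq G$ for some open normal $N$, and that the preimage of a maximal subgroup of $G/N$ is a maximal open subgroup of $G$ containing $\Phi$ --- are both standard and straightforwardly checked. One point worth making explicit: the closedness hypothesis on $H$ that you add in the profinite case is not mere bookkeeping but genuinely necessary, as the statement quoted in the paper is otherwise false (take $G=\ZZ_2$, $\Phi=2\ZZ_2$, and $H=\ZZ$ dense; then $\Phi H=G$ but $H\neq G$). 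This is harmless for the paper's application, where $H$ is the closed subgroup $G_a$ of $G^{\text{arith}}$, but your formulation is the correct one.
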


Remember that $K$ denotes the field $k(t)$ and $K_{n}$ denotes the splitting field of $f^{n}(x)-t$ over $K$ for $n \geq 1$. We defined earlier that $K_{\infty}:=\bigcup_{n} K_n$.  
 
 \begin{proposition}\label{K-infty}
The field $K_{\infty}$ (in fact $K_5$) contains the field $L=K(\sqrt{t},\sqrt{t-1},\zeta_8)$.
\end{proposition}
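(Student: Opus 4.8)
The plan is to show that each of the three quadratic-type generators $\sqrt{t}$, $\sqrt{t-1}$, and $\zeta_8$ already lies in $K_5$, and then to conclude that their compositum $L$ does as well. The guiding principle is that all three quantities can be expressed in terms of the coordinates $t_{\ell_1\ell_2\cdots}-1$ attached to vertices of the tree, and every such coordinate lives in some $K_n$; the whole task is to control \emph{which} level $n$ is needed and to check that $n\le 5$ suffices in each case.

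First I would handle $\sqrt{t}$. By Lemma~\ref{lemma4e} applied to the root $\alpha=t$, we have $t=\bigl(\tfrac{1}{t_1-1}\bigr)^2$, so $\sqrt{t}=\pm\tfrac{1}{t_1-1}\in K_1$, since $t_1-1$ is a difference of a first-level root and $1$. Next I would handle $\sqrt{t-1}$. Here Lemma~\ref{lemma4e} gives directly
\[
t-1=\left[\frac{1}{(t_1-1)(t_{11}-1)(t_{21}-1)}\right]^2,
\]
so $\sqrt{t-1}$ is (up to sign) the reciprocal of a product of coordinates attached to vertices at levels $1$ and $2$; all of these lie in $K_2$, hence $\sqrt{t-1}\in K_2\subseteq K_5$. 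Finally, for $\zeta_8$: the Example following Theorem~\ref{unity} exhibits an explicit primitive $8$th root of unity as a product of expressions $\beta-1$ and $(\beta-1)^{-1}$ over the green, blue, red, and purple vertices drawn there. The key observation is that the deepest vertices appearing in that expression sit at level $5$ of the tree. Concretely, $\zeta_4$ is given by Equation~\eqref{zeta4} in terms of level-$2$ and level-$3$ vertices, and taking the square root via Lemma~\ref{lemma4e} once more (as in the inductive step of Theorem~\ref{unity}, passing from $\zeta_{2^m}$ to $\zeta_{2^{m+1}}$ with $m=2$) rewrites each factor using vertices two levels deeper, i.e. at levels up to $5$. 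Since $K_5$ is the splitting field of $f^5(x)-t$, it contains the coordinates of every vertex up to level $5$, and therefore $\zeta_8\in K_5$.

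With all three generators placed in $K_5$, the compositum $L=K(\sqrt{t},\sqrt{t-1},\zeta_8)$ is contained in $K_5$, and a fortiori in $K_\infty$, which is the claim. The main obstacle I anticipate is the bookkeeping in the $\zeta_8$ step: one must verify carefully that the recursive application of Lemma~\ref{lemma4e} to the $\zeta_4$-expression does not push any vertex below level $5$, which amounts to tracking that a coordinate $\beta-1$ at level $r$ is rewritten using coordinates at levels $r+1$ and $r+2$, and then confirming that the starting level-$3$ vertices in $\zeta_4$ land at worst at level $5$ after one such step. This is exactly the content encoded in the Example's figure, so I would lean on that explicit vertex-labeling to make the level count rigorous rather than re-deriving the root of unity from scratch.
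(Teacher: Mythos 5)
Your proposal is correct and follows essentially the same route as the paper: $\sqrt{t}\in K_1$ from $t=\tfrac{1}{(t_1-1)^2}$, $\sqrt{t-1}$ from Lemma~\ref{lemma4e} applied at the root, and $\zeta_8\in K_5$ by tracking levels in the construction of Theorem~\ref{unity} (exactly what the paper's Example encodes, with each factor $\beta-1$ at level $r$ rewritten via vertices at levels $r+1$ and $r+2$, so the level-$3$ factors of $\zeta_4$ land at level $5$). The only, inessential, difference is that you place $\sqrt{t-1}$ in $K_2$ while the paper states $K_3$; both bounds are compatible with the claim since each is contained in $K_5$.
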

\begin{proof}
By Lemma \ref{lemma4e} we have $\sqrt{t} $ and $ \sqrt{t-1}$ are in $K_{\infty}$. In fact, since $t=\frac{1}{(t_{1}-1)^2}$, $\sqrt{t} $ is in $K_1$. Similarly from the expression given in Lemma~\ref{lemma4e}, $\sqrt{t-1}$ is in $K_3$. By Theorem \ref{unity} we have $\zeta_8 \in K_\infty$. Hence $K_\infty$ contains $L$.
In particular, using the construction in the proof of Theorem \ref{unity} it is possible to see that  $\zeta_8 \in K_5$. 
\end{proof}

\begin{theorem}\label{Frattini index}
The Frattini subgroup $\Phi(\ga)$ of $G^{\text{arith}}$ has index $16$ in $G^{\text{arith}}$ and it is the subgroup of $G^{\text{arith}}$ fixing the field $L=K(\sqrt{t},\sqrt{t-1},\zeta_8)$. 
\end{theorem}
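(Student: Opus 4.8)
The plan is to identify the fixed field of $\Phi(\ga)$ with the maximal multiquadratic (elementary abelian, exponent $2$) subextension of $K_\infty/K$ and to show it equals $L$. Since $W=\Aut(T)$ is a pro-$2$ group, so is $\ga\le W$; hence $\Phi(\ga)=\overline{\ga^2[\ga,\ga]}$ and $\ga/\Phi(\ga)$ is the maximal elementary abelian $2$-quotient. By Galois theory its fixed field $M:=K_\infty^{\Phi(\ga)}$ is exactly the compositum of all quadratic subextensions of $K_\infty/K$, and $[\ga:\Phi(\ga)]=[M:K]=2^{\dim_{\FF_2}\mathrm{Hom}(\ga,\FF_2)}$. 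Thus the theorem reduces to proving $M=L$ together with $[L:K]=16$.

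First I would establish $L\subseteq M$. By Proposition~\ref{K-infty} we have $L\subseteq K_5\subseteq K_\infty$, and $L/K$ is multiquadratic: each of $K(\sqrt t)$ and $K(\sqrt{t-1})$ is quadratic, while $\Gal(K(\zeta_8)/K)\hookrightarrow(\ZZ/8)^{\times}$ is elementary abelian of exponent $2$, so the compositum $L$ is multiquadratic. Hence $L\subseteq M$, which already shows that $\Phi(\ga)=\Gal(K_\infty/M)$ fixes $L$ and that $[\ga:\Phi(\ga)]\ge[L:K]$.

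The reverse inclusion $M\subseteq L$ is where the discriminant formula enters, and I expect it to be the main obstacle. I would argue that every quadratic subextension $K(\sqrt D)$ of $K_\infty$ satisfies $D\equiv c\,t^{a}(t-1)^{b}\pmod{(K^{\times})^2}$ with $a,b\in\{0,1\}$ and $c$ a constant class in $\langle -1,2\rangle\subseteq k^{\times}/(k^{\times})^2$, so that $\sqrt D\in L$. The non-constant part is controlled by the fact that $f^n$ is branched only over the postcritical set $\{0,1,\infty\}$: equivalently, by Proposition~\ref{discriminant}, $\Delta_n=c_n\,t^{a_n}(1-t)^{b_n}$, so $K_\infty/K$ is unramified at every finite place of $k(t)$ other than $t=0$ and $t=1$, and Kummer theory forces the non-constant part of $D$ into the group generated by $t$ and $t-1$. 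The constant part is controlled by the arithmetic content of the discriminant: since $c_n$ is a power of $2$ up to sign (Proposition~\ref{discriminant} together with Lemmas~\ref{lglh}--\ref{resultant}), the constant-field subextension $\overline k\cap K_\infty$ is unramified over $k$ outside the primes above $2$ and the archimedean places, whence its multiquadratic part lies in $k(\sqrt{-1},\sqrt 2)=k(\zeta_8)$. Combining the two constraints yields $M\subseteq L$.

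Finally, to conclude that the index is exactly $16$ I would check that the four classes $t,\,t-1,\,-1,\,2$ are independent in $K^{\times}/(K^{\times})^2$, so that $[L:K]=16$: here $\sqrt t,\sqrt{t-1}$ are independent of each other and of the constants because $t$ and $t-1$ have odd valuation at the distinct places $t=0$ and $t=1$, while $\sqrt{-1},\sqrt 2$ give independent constant extensions (equivalently $[k(\zeta_8):k]=4$, the condition appearing in Theorem~\ref{intromain}(1)). The same independence is visible representation-theoretically via the inflation--restriction sequence attached to \eqref{eq:exact}: the characters cut out by $\sqrt t,\sqrt{t-1}$ restrict to the two independent quadratic characters of the $2$-generated group $G^{\text{geom}}=\langle a_1,a_2,a_3\mid a_1a_2a_3=1\rangle$, whereas those cut out by $\sqrt{-1},\sqrt 2$ are inflated from two independent quadratic characters of $\Gal(F/k)$, forcing $\dim_{\FF_2}\mathrm{Hom}(\ga,\FF_2)=4$. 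Together with $M=L$ this gives $[\ga:\Phi(\ga)]=[L:K]=16$ and identifies $\Phi(\ga)$ as the stabilizer of $L$, completing the argument.
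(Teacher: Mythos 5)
Your overall strategy coincides with the paper's: both arguments reduce the theorem to showing that every quadratic subextension of $K_\infty/K$ lies in $L$ (giving $\Gal(K_\infty/L)\subseteq\Phi(\ga)$), and conversely that $L$ is a degree-$16$ compositum of quadratic subextensions contained in $K_\infty$ (via Proposition~\ref{K-infty}), with Proposition~\ref{discriminant} forcing the polynomial part of any quadratic class to be $t^a(t-1)^b$. Your framing through the maximal elementary abelian $2$-quotient is just a repackaging of the paper's "intersection of maximal subgroups" argument.

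There is, however, one step where you assert exactly what has to be proved. You claim that since $c_n$ is a power of $2$ up to sign, ``the constant-field subextension $\overline{k}\cap K_\infty$ is unramified over $k$ outside the primes above $2$ and the archimedean places.'' This does not follow formally from Proposition~\ref{discriminant}: $\Delta_n$ is the discriminant of $f^{n}(x)-t$ over $k(t)$ and controls ramification of $K_n/k(t)$ at places of the \emph{function field}; by itself it says nothing about ramification of the number-field extension $(\overline{k}\cap K_n)/k$, and Kummer theory over $k(t)$ constrains only the polynomial part of a quadratic class $D$, leaving the constant $c\in k^{\times}/(k^{\times})^{2}$ untouched. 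The paper bridges this by specialization: since $\sqrt{t},\sqrt{t-1}\in K_\infty$, any quadratic subextension contributes a constant class $\sqrt{c}\in K_\infty$; setting $t=-1$, such constants embed into the residue field, which is the splitting field $k_n$ of $f^{n}(x)+1$ over $k$, and the specialized discriminant $\Delta_n(-1)=c_n(-1)^{a_n}2^{b_n}$ is $\pm 2^{m}$, so only primes above $2$ can ramify. Some such transfer argument (specialization or good reduction of the cover) is needed to make your claim correct, and it is the key arithmetic input of the whole proof. Separately, your final inference --- that a quadratic extension of $k$ unramified outside $2$ and $\infty$ must lie in $k(\zeta_8)$ --- is valid for $k=\QQ$ but fails for general number fields (e.g.\ when $k$ admits unramified quadratic extensions); the paper's own step ``hence $c$ has to be $\pm 2^{m}$'' makes the same tacit assumption, so this caveat is shared rather than a defect particular to your write-up.
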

\begin{proof}
Let $K=k(t)$ and $H$ be a maximal subgroup of $\ga$. Since $\ga$ is a pro-2 group, the index of $H$ in $\ga$ is 2. Let $K_H=K(\sqrt{a(t)})$ denote the quadratic extension of $K$ fixed by $H$ where $a(t)$ is a squarefree polynomial in $k[t]$ and hence $\Gal(K_\infty/K_H)=H$. Since any prime dividing $a(t)$ ramifies in $K_H$, it ramifies in $K_\infty$ hence ramifies in  $K_n=K(f^{n}(x)-t)$ for some $n\geq 1$. By Proposition \ref{discriminant}, $a(t)=ct^i(1-t)^j$ for some $i,j \in \{0,1\}$ and $c \in k$. 

Specializing at $t=-1$ we see that $a(-1)$ ramifies in some extension $k_n=k(f^{n}(x)+1)$ of $k$. By Proposition~\ref{discriminant},  the discriminant of $k_n$ is $(\pm 2)^m$ for some $m$ and hence $c$ has to be $\pm 2^m$ for some $m$.

Therefore  $K_H$ is contained in $L=K(\sqrt{t}, \sqrt{1-t}, \zeta_8)$. This implies that $\Gal(K_\infty /L)$ is a subgroup of $H$, where $H$ is any maximal subgroup of $\ga$, hence $\Gal(K_\infty /L) $ is a subgroup of $\Phi(\ga)$.

On the other hand, $$\Gal(K_\infty/L)=\Gal(K_\infty/K(\sqrt{t})) \cap \Gal(K_\infty/K(\sqrt{1-t})) \cap \Gal(K_\infty/K(\sqrt{2})) \cap \Gal(K_\infty/K(i)) 
$$ and each of these subgroups on the right are of index two in $\ga$ hence they are maximal subgroups of $\ga$ and therefore $\Phi(\ga)$ is contained in $\Gal(K_\infty/L)$.  

This shows that $\Phi(\ga)=\Gal(K_\infty/L)$ and since the degree of the extension $L/K$ is $16$, the claim is proved.
\end{proof}

\begin{theorem}\label{main}
Let $a \in k$ and $K_{\infty,a}:=\bigcup_{n} K_{n,a}$ where $K_{n,a}$ is the splitting field of $f^{n}(x)-a$ over $k$ for $n \geq 1$.  Let $G_a$ be the inverse limit of $G_{n,a}:=\Gal(K_{n,a}/k)$. The following are equivalent:
\begin{enumerate} \item The degree $| k(\sqrt{a},\sqrt{a-1},\zeta_8):k|=16$,  \label{1}
\item $G_a=G^{\text{arith}}$,\label{2}
\item $G_{5,a}=G^{\text{arith}}_5$ where $G_{5,a}$ denotes the Galois group of $\Gal(K_{5,a}/\QQ)$.\label{3}\
\end{enumerate}
\end{theorem}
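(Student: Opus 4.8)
The plan is to reduce all three conditions to a single statement about the image of the specialization inside the Frattini quotient of $G^{\text{arith}}$, and then close the cycle $(\ref{2})\Rightarrow(\ref{3})\Rightarrow(\ref{1})\Rightarrow(\ref{2})$. The central structural input is Theorem~\ref{Frattini index}: since $G^{\text{arith}}$ is a pro-$2$ group, its Frattini quotient $G^{\text{arith}}/\Phi(G^{\text{arith}})$ is elementary abelian, and that theorem identifies it canonically with $\Gal(L/K)\cong(\ZZ/2)^4$, where $L=K(\sqrt{t},\sqrt{t-1},\zeta_8)$. By Proposition~\ref{K-infty} we have $L\subseteq K_5$, so this Frattini quotient already factors through the finite level-$5$ quotient $G^{\text{arith}}_5$. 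I would first record the following translation, valid for a closed subgroup $H$ of $G^{\text{arith}}$ and equally for a subgroup of the finite $2$-group $G^{\text{arith}}_5$: by Proposition~\ref{prop:Frattini}, together with the elementary fact that $\Phi(G)H=G$ is equivalent to $H$ surjecting onto $G/\Phi(G)$, one has $H=G$ if and only if the image of $H$ in $\Gal(L/K)$ is everything.

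Next I would analyze the specialization $t\mapsto a$. Because Galois groups do not grow under specialization, this realizes $G_a$ inside $G^{\text{arith}}$ and $G_{5,a}$ inside $G^{\text{arith}}_5$. The image of $G_a$ under $G^{\text{arith}}\twoheadrightarrow\Gal(L/K)$ is exactly $\Gal(L_a/k)$, where $L_a=k(\sqrt{a},\sqrt{a-1},\zeta_8)$ is the specialization of $L$; since $L_a$ is multiquadratic over $k$, its Galois group $\Gal(L_a/k)$ fills all of $(\ZZ/2)^4$ precisely when $[L_a:k]=16$, which is condition~(\ref{1}). The same holds at level $5$, using $L\subseteq K_5$ and the restriction of $G_{5,a}\subseteq G^{\text{arith}}_5$ to $L$.

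With these in place the implications are short. For $(\ref{2})\Rightarrow(\ref{3})$ I would just restrict to level $5$: $G_{5,a}=\pi_5(G_a)=\pi_5(G^{\text{arith}})=G^{\text{arith}}_5$. For $(\ref{3})\Rightarrow(\ref{1})$, equality at level $5$ forces the image of $G_{5,a}$ in $\Gal(L/K)$ to be surjective (as $L\subseteq K_5$), hence $\Gal(L_a/k)=\Gal(L/K)$ and $[L_a:k]=16$. For $(\ref{1})\Rightarrow(\ref{2})$, condition~(\ref{1}) gives that $G_a$ has full image in the Frattini quotient, so $\Phi(G^{\text{arith}})\,G_a=G^{\text{arith}}$, and Proposition~\ref{prop:Frattini} yields $G_a=G^{\text{arith}}$.

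The \emph{main obstacle} is not the cycle itself but justifying the two identifications used throughout: that the level-$5$ Frattini quotient is still exactly $(\ZZ/2)^4$, and that the image of the specialized group in $\Gal(L/K)$ is precisely $\Gal(L_a/k)$. The first follows from the content of the proof of Theorem~\ref{Frattini index}, namely that every quadratic subextension of $K_\infty/K$ already lies in $L$ (this rests on the discriminant computation of Proposition~\ref{discriminant}); combined with $L\subseteq K_5\subseteq K_\infty$, no new quadratic subextensions appear at level $5$, so the level-$5$ Frattini quotient coincides with $\Gal(L/K)$. The second is the compatibility of the specialization embedding $G_{5,a}\into G^{\text{arith}}_5$ with restriction to the common subfield $L$, which I would spell out carefully so that surjectivity onto $\Gal(L/K)$ is exactly the numerical condition $[L_a:k]=16$.
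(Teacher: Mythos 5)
Your proposal is correct, and for two of the three implications it coincides with the paper's own proof: for (1)$\Rightarrow$(2) the paper likewise places $L_a=k(\sqrt{a},\sqrt{a-1},\zeta_8)$ inside $K_{\infty,a}$, lifts the $16$ elements of $\Gal(L_a/k)$ into $G_a$ to see that $G_a$ meets every coset of $\Phi(G^{\text{arith}})$, and concludes via Proposition~\ref{prop:Frattini} and Theorem~\ref{Frattini index}; (2)$\Rightarrow$(3) is immediate in both treatments. The genuine difference is in (3)$\Rightarrow$(1). There the paper works hard: a Magma computation of the normalizer $N_5$ of $G_5$ in $W_5$ gives $|N_5/G_5|=4$, hence $|G^{\text{arith}}_5:G_5|=4$ and $G^{\text{arith}}_5=\langle a_1,a_2,w_1,w_2\rangle$ for two explicitly constructed automorphisms $w_1,w_2$, whose actions on $\sqrt{\Delta_1}$, $\sqrt{\Delta_2}$, $\zeta_8$ are then computed one by one to exhibit $16$ distinct restrictions to $L_a$. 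You replace all of this by pure Galois theory: since $L\subseteq K_5$ (Proposition~\ref{K-infty}) and $L/K$ is Galois, the restriction map $G^{\text{arith}}_5=\Gal(K_5/K)\to\Gal(L/K)$ is automatically surjective, and under the specialization embedding the image of $G_{5,a}$ in $\Gal(L/K)$ is the decomposition subgroup at $t=a$, isomorphic to $\Gal(L_a/k)$; so $G_{5,a}=G^{\text{arith}}_5$ forces $[L_a:k]=|\Gal(L/K)|=16$, the last equality coming from Theorem~\ref{Frattini index}. This is a real simplification---no computer calculation and no explicit $w_1,w_2$ are needed for the equivalence. What the paper's heavier route buys is structural data of independent interest: the precise index $|G^{\text{arith}}_5:G_5|=4$ and the explicit generators $w_1,w_2$, which are reused later in the paper (for instance $w_1$ in the example following Proposition~\ref{kernelzeta}). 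The one step you must indeed spell out, as you acknowledge, is the compatibility of specialization with restriction: $G_{5,a}$ embeds as a decomposition group at a place of $K_5$ above $t=a$; this place is unramified since $a\notin\{0,1\}$ (automatic under any of the three conditions, and by Proposition~\ref{discriminant} the only finite branch points are $0$ and $1$); its restriction to $L$ is the decomposition group there, whose residue extension is exactly $L_a/k$. The paper implicitly relies on the same compatibility in its lifting step, so you assume nothing beyond what the paper does. Finally, note that your level-$5$ Frattini identification, though correct for the reason you give (every quadratic subextension of $K_5/K$ is one of $K_\infty/K$, hence lies in $L$), is not actually load-bearing in your cycle: (3)$\Rightarrow$(1) uses only surjectivity of restriction, and the Frattini argument is invoked only at the profinite level in (1)$\Rightarrow$(2).
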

\begin{proof}
We  start by showing that (\ref{1}) implies (\ref{2}). Define $L_a=k(\sqrt{a},\sqrt{a-1},\zeta_8)$. We calculate $\Delta_1=\Delta(f(x)-a)= 4a$ and $\Delta_2=\Delta(f^{2}(x)-a)=\pm 2^8 (a)^3(a-1)$. Then $K_{\infty,a}$ contains $\sqrt{a}$ and $\sqrt{a-1}$. By the construction in section~\ref{sec:roots}, we also obtain $\zeta_8$ in $K_{\infty,a}$. Hence $L_a$ is a subset of $K_{\infty,a}$. Since $\Gal(L_a/k)=16$, the Galois group $\Gal(L_a/k)$ consists of the automorphisms
\[ \sqrt{\Delta_1} \mapsto \pm \sqrt{\Delta_1}, \quad  \sqrt{\Delta_2} \mapsto \pm \sqrt{\Delta_2}, \quad \zeta_8 \mapsto \pm\zeta_8, \pm\zeta_8^3. \]
We can lift each of these maps to $G_a$. Hence $G_a$ contains an element from each coset of the Frattini subgroup $\Phi(G^{\text{arith}})$ in $G^{\text{arith}}$. Since $G_a$ is a subgroup of $G^{\text{arith}}$, we have $G^{\text{arith}}=G_a\Phi(G^{\text{arith}})$. By Proposition~\ref{prop:Frattini}, $G_a=G^{\text{arith}}$.

It is obvious that (\ref{2}) implies (\ref{3}).  In order to prove (\ref{3}) implies (\ref{1}), we first claim that $|G^{\text{arith}}_5/G_5|=4$. 

 Let $N_5$ be the normalizer of $G_5$ in $W_5$. A Magma computation shows that $|N_5/G^{}_5|=4$. Since $G_5$ is a normal subgroup of $G^{\text{arith}}_5$,
we have $G^{\text{arith}}_5$ is a subgroup of $N_5$. 
Hence $|{G^{\text{arith}}_5}:G_{5}|\leq 4$. and since we know already that $\zeta_8$ is contained in $K_{\infty}$, we obtain
 $|G^{\text{arith}}_5:G_5|=4$.
 
 In this step, we  describe the elements of $G_5^{\text{arith}}$.  Since $L_a\subseteq K_5$ by Proposition \ref{K-infty}, we will find $16$ automorphisms in $\Gal(L_a/k)$.  Let $G_5^{\text{arith}}=\langle a_1,a_2,w_1,w_2 \rangle$ where $w_1$ and $w_2$ are defined as follows:
 
 \begin{align*}
   s_1&=(-,\sigma)\in W_2,\; s_2=(s_1,s_1) \in W_3, \; s_3=(s_2,-) \in W_4 \text{ and } w_1=(s_3,-) \in W_5\\
   u_{11}&=(-,\sigma) \in W_2,\; u_1=(u_{11},-) \in W_3,\; u_2=\sigma \in W_3, \; u=(u_1,u_2) \in W_4,\\
   v_{21}&=(-,\sigma) \in W_2,\; v_2=(v_{21},-) \in W_3, \; v_1=(-,\sigma) \in W_3,\; v=(v_1,v_2)\in W_4 \text{ and } w_2=(u,v) \in W_5
 \end{align*}

We include the diagram below to demonstrate the action of the elements $w_1, w_2$.

\begin{center}
	%\begin{comment}		
 \scalebox{1.3}{	
	\begin{forest}
 		my tree
 		[$t$, fill, circle, scale=0.1, name=t
 		[$t_2$, fill,circle, scale=0.1, name=t2
 		[$t_{22}$ , fill, circle, scale=0.1, draw, name=t22
 		[$t_{222}$, fill, circle, scale=0.1, draw
 		[$t_{v2}$, fill, circle, scale=0.1, draw
 		[$t_{v22}$, fill, circle, scale=0.1, draw]
 		[$t_{v21}$, fill, circle, scale=0.1, draw]
 		]
 		[$t_{v1}$, fill, circle, scale=0.1, draw
 		[$t_{v12}$, fill, circle, scale=0.1, draw]
 		[$t_{v11}$, fill, circle, scale=0.1, draw]
 		]]
 		[\color{blue}$t_{221}$, fill, circle, scale=0.1, draw
 		[$t_{s2}$, fill, circle, scale=0.1, draw
 		[$t_{s22}$, fill, circle, scale=0.1, draw, name=s22]
 		[\color{purple}$t_{s21}$, fill, circle, scale=0.1, draw, name=s21]
 		]
 		[\color{red}$t_{s1}$, fill, circle, scale=0.1, draw
 		[$t_{s12}$, fill, circle, scale=0.1, draw]
 		[\color{purple}$t_{s11}$, fill, circle, scale=0.1, draw]
 		]
 		]
 		]
 		[$t_{21}$,  fill, circle, scale=0.1, draw, name=t21
 		[$t_{212}$, fill, circle, scale=0.1, draw, name=t212
 		[$t_{n2}$, fill, circle, scale=0.1, draw, name=tn2
 		[$t_{n22}$, fill, circle, scale=0.1, draw, name=n22]
 		[$t_{n21}$, fill, circle, scale=0.1, draw, name=n21]
 		]
 		[$t_{n1}$ , fill, circle, scale=0.1, draw, name=tn1
 		[$t_{n12}$, fill, circle, scale=0.1, draw]
 		[$t_{n11}$, fill, circle, scale=0.1, draw]
 		]
 		]
 		[\color{blue}$t_{211}$ , fill, circle, scale=0.1, draw, name=t211
 		[$t_{m2}$, fill, circle, scale=0.1, draw
 		[$t_{m22}$, fill, circle, scale=0.1, draw, name=m22]
 		[\color{purple}$t_{m21}$, fill, circle, scale=0.1, draw, name=m21]
 		]
 		[\color{red}$t_{m1}$, fill, circle, scale=0.1, draw
 		[$t_{m12}$, fill, circle, scale=0.1, draw]
 		[\color{purple}$t_{m11}$, fill, circle, scale=0.1, draw]
 		]
 		]]]
 		[$t_1$, fill, circle, scale=0.1, name=t1
 		[$t_{12}$,  fill, circle, scale=0.1, draw, name=t12
 		[$t_{122}$, fill, circle, scale=0.1, draw, name=t122
 		[$t_{\ell2}$ , fill, circle, scale=0.1, draw
 		[$t_{\ell22}$, fill, circle, scale=0.1, draw]
 		[$t_{\ell21}$, fill, circle, scale=0.1, draw]
 		]
 		[$t_{\ell1}$,  fill, circle, scale=0.1, draw
 		[$t_{\ell12}$, fill, circle, scale=0.1, draw]
 		[$t_{\ell11}$, fill, circle, scale=0.1, draw]
 		]]
 		[\color{blue}$t_{121}$, fill, circle, scale=0.1, draw, name=t121
 		[$t_{k2}$ , fill, circle, scale=0.1, draw
 		[$t_{k22}$, fill, circle, scale=0.1, draw]
 		[\color{purple}$t_{k21}$, fill, circle, scale=0.1, draw]
 		]
 		[\color{red}$t_{k1}$ , fill, circle, scale=0.1, draw
 		[$t_{k12}$, fill, circle, scale=0.1, draw]
 		[\color{purple}$t_{k11}$, fill, circle, scale=0.1, draw]
 		]
 		]
 		]
 		[$t_{11}$, fill , circle, scale=0.1, draw, name=t11
 		[$t_{112}$, fill, circle, scale=0.1, draw, name=t112
 		[$t_{j2}$ , fill, circle, scale=0.1, draw
 		[$t_{j22}$, fill, circle, scale=0.1, draw,name=buraya]
 		[$t_{j21}$, fill, circle, scale=0.1, draw, name=burdan]
 		]
 		[$t_{j1}$ , fill , circle, scale=0.1, draw
 		[$t_{j12}$ , fill, circle, scale=0.1, draw ]
 		[$t_{j11}$ , fill, circle, scale=0.1, draw]
 		]
 		]
 		[\color{blue}$t_{111}$, fill , circle, scale=0.1, draw, name=t111
 		[$t_{i2}$, fill, circle, scale=0.1, draw
 		[$t_{i22}$, fill, circle, scale=0.1, draw, name=where in]
 		[\color{purple}$t_{i21}$, fill, circle, scale=0.1, draw, name=where out]
 		]
 		[\color{red}$t_{i1}$, fill , circle, scale=0.1, draw
 		[$t_{i12}$, fill, circle, scale=0.1, draw]
 		[\color{purple} $t_{i11}$, fill, circle, scale=0.1, draw]  		]
 		]]]
 		]	
 		\draw[-stealth] (where out) to[out=70, in=120] node [above]{\tiny$w_1$}(where in);
 		\draw[shorten <=0.2cm, shorten >=0.2cm, -stealth] (t121) to [out=30, in=150] node [above]{\tiny$w_2$}(t122);
 		\draw[shorten <=0.1cm, shorten >=0.1cm, -stealth] (tn1) to [out=30, in=150] node [above]{\tiny$w_2$}(tn2);
 		\draw[-stealth] (s21) to [out=70, in=120] node [above]{\tiny$w_2$}(s22);
 		\draw[-stealth] (where out) to[out=-70, in=-120] node [below]{\tiny$\;w_2\;$}(where in);
 		\draw[-stealth] (burdan) to[out=70, in=120] node [above]{\tiny$w_1$}(buraya);
 	%	\draw[shorten <=0.7cm,shorten >=0.7cm,- ] (t111) to node [above]{\tiny$w_1$}(t112);
 	%	\draw[shorten <=1.2cm,shorten >=1.2cm,- ] (t11) to node [above]{\tiny$w_1$}(t12);
 	%	\draw[shorten <=2.4cm,shorten >=2.4cm,- ] (t1) to node [above]{\tiny$w_1$}(t2);
 	 \end{forest}}
\end{center}
% \end{comment}
 
 Now, $a_1\restr{T_1}=\id$ so $a_1$ fixes $\sqrt{t}$ (or  $\sqrt{\Delta_1}$ ) and since $a_1\restr{T_2}=(\id,\sigma)$ is an odd permutation $ \sqrt{\Delta_2} \mapsto -\sqrt{\Delta_2}$ by $a_1$.  On the other hand, $a_2\restr{T_1}\neq\id$ so it sends  $ \sqrt{t} \mapsto -\sqrt{t}$ (or  $ \sqrt{\Delta_1} \mapsto -\sqrt{\Delta_1}$) and $a_2\restr{T_2}=(\id,\id)\sigma$ is an even permutation at level $2$ leaving $\sqrt{\Delta_2}$ fixed.  Also, as $a_1,\;a_2$ are in $G_{}$ which fixes $\overline{k}(t)$ they both fix $\zeta_8$.
 
 The final step in the proof is that there exist two distinct automorphisms fixing the first two levels and sending $\zeta_8 \mapsto \zeta_8 ^k$ for $k\in\{3,5,7\}$.

 As $w_1\restr{T_4}=\id$, it fixes $\sqrt{\Delta_1}$ and $\sqrt{\Delta_2}$ and it only moves $t_{11121}$ to $t_{11122}$ and $t_{11221}$ to $t_{11222}$ at level five.  By using Equation \ref{zeta4} and the fact that $\alpha_{1}-1=-(\alpha_{2}-1)$ for any vertex $\alpha$, we obtain $w_1(\zeta_8)=-\zeta_8=\zeta_8^5$.
 
 Similarly, we see that $w_2$ fixes $\sqrt{\Delta_1}$ and $\sqrt{\Delta_2}$ since $w_2\restr{T_2}=\id$ and it only moves $t_{121}$ to $t_{122}$ at level three.  By using Equation \ref{zeta4} and the fact that $t_{121}-1=-(t_{122}-1)$, we obtain $w_2(\zeta_8^2)=w_2(i)=-i$.  Hence, $w_2(\zeta_8)\neq \pm \zeta_8$.
 
 The restrictions of $a_1,a_2,w_1,w_2$ to $L_a/k$ give exactly $16$ automorphisms and therefore $|L_a:k|=16$.
\end{proof}

\section{ Further properties of the Group $G$}\label{furtherG}

We first prove that $G_2=W_2$ when $n\leq 2$.
    \begin{proposition}\label{G-E isom. n=2}
    	For $n\leq 2$, $G_n=W_n$. 
    \end{proposition}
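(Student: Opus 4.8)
The plan is to compute the images of the generators $a_1,a_2,a_3$ under the projections $\pi_1$ and $\pi_2$ explicitly and then to check that these images already exhaust $W_1$ and $W_2$. Since $W_1=S_2$ has order $2$ and $W_2$ is the dihedral group of order $8$ (Example~\ref{ex:aut2}), both assertions reduce to producing enough elements. By Lemma~\ref{a-rel} we have $a_1a_2a_3=\id$, so $G$ is already generated by $a_1$ and $a_3$; hence it suffices to understand $a_1\restr{T_n}$ and $a_3\restr{T_n}$ for $n\in\{1,2\}$.

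For $n=1$ I would read off the recursions directly: $a_1=(\id,a_3)$ acts trivially on level one, so $a_1\restr{T_1}=\id$, while $a_3$ acts by $\sigma$ on level one, so $a_3\restr{T_1}=\sigma$. Thus $G_1=\langle\sigma\rangle=W_1$, which settles the case $n=1$.

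For $n=2$ I would unwind the wreath recursions one level, using $(u,v)\tau\restr{T_2}=(u\restr{T_1},v\restr{T_1})\tau$ together with $a_1\restr{T_1}=\id$, $a_2\restr{T_1}=\sigma$, and $a_3\restr{T_1}=\sigma$ computed above. This gives
\[
a_1\restr{T_2}=(\id,\,a_3\restr{T_1})=(\id,\sigma),\qquad
a_3\restr{T_2}=(a_2\restr{T_1},\,\id)\,\sigma=(\sigma,\id)\sigma.
\]
Translating into the permutation notation of Example~\ref{ex:aut2} through the leaf numbering \eqref{eq:leafnumber}, these are respectively the elements $(24)$ and $(1432)$ of $S_4$, so $G_2=\langle(24),(1432)\rangle$.

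I would then conclude with a short group-theoretic observation. The element $(1432)$ is a $4$-cycle, hence generates a cyclic subgroup of order $4$, namely $\langle(1432)\rangle=\{\id,(1432),(13)(24),(1234)\}$; the transposition $(24)$ does not lie in this subgroup. Therefore $G_2$ strictly contains an order-$4$ subgroup inside the order-$8$ group $W_2$, which forces $|G_2|=8$ and hence $G_2=W_2$. I do not expect any genuine obstacle here: the only care required is the bookkeeping that matches the recursive wreath expressions to the explicit permutations via the chosen leaf numbering, and once that dictionary is fixed (as in Example~\ref{ex:aut2}) the argument is a direct verification.
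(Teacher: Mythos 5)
Your proof is correct and follows essentially the same route as the paper: both compute the level-one and level-two restrictions of the generators and conclude by an order count that the generated subgroup already has order $8$ (an order-$4$ element together with an involution outside its cyclic subgroup). The only cosmetic differences are that you discard $a_2$ via Lemma~\ref{a-rel} and carry out the permutation bookkeeping explicitly, and your computation $a_3\restr{T_2}=(\sigma,\id)\sigma=(1432)$ is in fact the correct one (the paper's $(\id,\sigma)\sigma$ appears to be a typo, harmless since both have order $4$).
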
	
 \begin{proof} When $n=1$, $a_2\restr{T_1}=\sigma$ generates $W_1$. If $n=2$, then $W_2$ has order $8$. Both $a_1\restr{T_2}=(\id,\sigma)$ and $a_2\restr{T_2}=\sigma$ have order $2$. We have $a_3\restr{T_2}=(\id, \sigma)\sigma$ has order $4$. They together generate a subgroup of order $8$, hence $G_2=W_2$.
 \end{proof}	
	
	Now we describe $G_n$ when $n=3$.  First we introduce some notation.
	
	\begin{definition}\label{def:sgn3}\mbox{} 
	Let $w:=(\sigma_1,\sigma_2)\tau$ be in $W_3$ where $\sigma_i=(\sigma_{i1},\sigma_{i2})\tau_i$ for $i=1,2$.
	\begin{enumerate} 
  \item We define $ \sgn_3 :W_3\to \{\pm 1\}$ by
       \begin{equation}
             \sgn_3(w) = \sgn(\tau) \prod_{i=1}^2(\sgn(\tau_i) \sgn(\sigma_{i1})\sgn(\sigma_{i2})).
         \end{equation}
    where $\sigma_i \in W_2$.
  Here, $\sgn$ is the usual sign on $W_1$ via the identification $
     W_1\simeq S_2$ induced by the choice of labeling of the vertices.
 \item For $n> 3$ we define 
   \begin{equation}\label{eq:sgn3}
       \sgn_3:=\sgn_3\circ \pi_{n,3}:W_n\to \{\pm 1\}.
   \end{equation}
\end{enumerate}
\end{definition}

Now we  prove that $\sgn_3: W_3 \rightarrow \{\pm1\}$ is a homomorphism.

\begin{proposition}  The map 
	$\sgn_3: W_3 \rightarrow \{\pm1\}$ is a homomorphism.
\end{proposition}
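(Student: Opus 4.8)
The plan is to verify the homomorphism property $\sgn_3(ww')=\sgn_3(w)\sgn_3(w')$ by a direct computation with the wreath-product multiplication rule \eqref{eq:relations}, organized around the observation that $\sgn_3$ is really a product of three \emph{level signs}. Writing
$$\sgn_3(w)=\underbrace{\sgn(\tau)}_{s_1(w)}\cdot\underbrace{\sgn(\tau_1)\sgn(\tau_2)}_{s_2(w)}\cdot\underbrace{\sgn(\sigma_{11})\sgn(\sigma_{12})\sgn(\sigma_{21})\sgn(\sigma_{22})}_{s_3(w)},$$
where $s_k$ records the parity of the swaps occurring at the vertices of level $k-1$, it suffices to show that each $s_k$ is multiplicative, since a product of homomorphisms is a homomorphism.

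Concretely, I would take $w=(\sigma_1,\sigma_2)\tau$ and $w'=(\sigma'_1,\sigma'_2)\tau'$ with second-level decompositions $\sigma_i=(\sigma_{i1},\sigma_{i2})\tau_i$ and $\sigma'_i=(\sigma'_{i1},\sigma'_{i2})\tau'_i$, and apply \eqref{eq:relations} twice. At the top level it gives $ww'=(\sigma_1\sigma'_{\tau(1)},\ \sigma_2\sigma'_{\tau(2)})\,\tau\tau'$; expanding each factor inside $W_2$ by \eqref{eq:relations} again yields, for $i=1,2$,
$$\sigma_i\sigma'_{\tau(i)}=\bigl(\sigma_{i1}\sigma'_{\tau(i),\tau_i(1)},\ \sigma_{i2}\sigma'_{\tau(i),\tau_i(2)}\bigr)\,\tau_i\tau'_{\tau(i)}.$$
Substituting these components into the definition of $\sgn_3$ and using that $\sgn$ is a homomorphism on each $S_2$-factor, the expression $\sgn_3(ww')$ splits into a product of unprimed factors and a product of primed factors.

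The unprimed factors reassemble verbatim into $\sgn_3(w)$. For the primed factors the point — and the only real content of the argument — is that the wreath action merely \emph{reindexes} the terms: because $\tau$ is a bijection of $\{1,2\}$ one has $\prod_{i}\sgn(\tau'_{\tau(i)})=\prod_i\sgn(\tau'_i)$, and because each $\tau_i$ is a bijection of $\{1,2\}$ one has $\sgn(\sigma'_{\tau(i),\tau_i(1)})\sgn(\sigma'_{\tau(i),\tau_i(2)})=\sgn(\sigma'_{\tau(i),1})\sgn(\sigma'_{\tau(i),2})$; a final appeal to the bijectivity of $\tau$ then collects these into $s_3(w')$. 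Thus the primed factors reassemble into $\sgn_3(w')$, giving $\sgn_3(ww')=\sgn_3(w)\sgn_3(w')$. The main obstacle is purely the index bookkeeping: one must track carefully which primed symbol lands in which slot after the double wreath shuffle, and the homomorphism property hinges precisely on the fact that a product of $\sgn$'s taken over \emph{all} slots at a fixed level is invariant under any permutation of those slots.
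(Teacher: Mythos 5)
Your proof is correct. It differs from the paper's in how the computation is organized, though both rest on the same mechanism. The paper argues recursively through the wreath structure $W_3\simeq W_2\wr S_2$: it first introduces the auxiliary map $s:W_2\to\{\pm1\}$, $s((\tau_1,\tau_2)\tau)=\sgn(\tau)\sgn(\tau_1)\sgn(\tau_2)$, proves that $s$ is a homomorphism (using that the two $W_1$-slots are at worst swapped), and then treats an element of $W_3$ as $(\sigma_1,\sigma_2)\tau$ with $\sigma_i\in W_2$, so that $\sgn_3=\sgn(\tau)\,s(\sigma_1)s(\sigma_2)$ and the $W_3$ case reduces to the already-proved multiplicativity of $s$ plus invariance of the two-fold product under the swap by $\tau$. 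You instead flatten everything: you factor $\sgn_3=s_1s_2s_3$ into \emph{level} signs, expand the product $ww'$ all the way down by applying \eqref{eq:relations} twice, and observe that the primed factors are only reindexed by the bijections $\tau$ and $\tau_i$, so the product over all slots at each level is unchanged. The paper's vertical (recursive) packaging is the one that iterates most naturally if one wanted to define $\sgn_n$ on $W_n$ for all $n$ by nesting auxiliary maps; your horizontal (level-by-level) packaging isolates the real reason the statement is true — the product of local signs over \emph{all} vertices of a fixed level is invariant under any permutation of those vertices — and shows directly that each level sign is separately a homomorphism, which is slightly more information than the paper extracts. Your index bookkeeping, including the composite reindexing $(i,j)\mapsto(\tau(i),\tau_i(j))$, is handled correctly, so there is no gap.
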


\begin{proof}
	Let $s$ be the map $s: W_2 \to \{\pm 1\}$ defined by $s(\tau_1,\tau_2)\tau=\sgn(\tau)\prod_i\sgn(\tau_i)$. We first observe that $s$ is a homomorphism.
	
	Let $f,g \in W_2$. Then by definition of $f=(\sigma_1, \sigma_2)\tau$ and  $g=(\beta_1, \beta_2)\tau'$ where $\sigma_i , \beta_i \in W_1=S_2$ and $\tau, \tau' \in S_2$. Using the group operation in $W_2$, we get $f * g= (\sigma_1 \beta_1, \sigma_2\beta_2) \tau \tau'$ or $(\sigma_1 \beta_2, \sigma_2\beta_1) \tau \tau' $. In either case, using the fact that $\sgn$ is a homomorphism, we get $s(f*g)= \sgn(\tau ) \sgn(\tau') \prod_{i=1}^2
	\sgn(\sigma_i)\sgn(\beta_i)$ which is equal to $s(f)s(g).$

	Now let $f=(\sigma_1, \sigma_2)\tau$ and $ g=(\beta_1, \beta_2)\tau' \in W_3$ where  $\sigma_i , \beta_i \in W_2$ and $\tau, \tau' \in S_2$. Using the group operation in $W_3$, we get $f \star g = (\sigma_1, \sigma_2) * (\beta_1, \beta_2) \tau \tau'$ where $*$ denotes the group operation in $W_2$. Hence we get $f\star g= (\sigma_1 * \beta_1, \sigma_2*\beta_2)\tau \tau'$ or $(\sigma_1 * \beta_2 \sigma_2*\beta_1)\tau \tau'$. Using the facts that both $\sgn$ and $s$ are homomorphism we get $\sgn_3(f \star g)=\sgn(\tau)\sgn( \tau') \prod_{i=1}^2
	s(\sigma_i)s(\beta_i)= \sgn_3(f) \sgn_3(g).$
	\end{proof}

\begin{remark} Since the natural projection map $\pi_{n,3}:W_n \rightarrow W_3$ is also a homomorphism, we get that $\sgn_3: W_n \to \{\pm 1\}$ is a homomorphism.
 \end{remark}

 \begin{proposition}\label{G-E isom. n=3}
 	$G_3= \ker(\sgn_3: W_3 \rightarrow \{\pm1\})$.
 \end{proposition}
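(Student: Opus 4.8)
The strategy is to prove the two inclusions separately and to match orders. Since $\sgn_3$ is a surjective homomorphism $W_3\to\{\pm1\}$, its kernel has index two, so $|\ker(\sgn_3)|=|W_3|/2=64$. Hence it is enough to establish $G_3\subseteq\ker(\sgn_3)$ together with the bound $|G_3|\ge 64$; equality of these finite groups then follows at once.

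\emph{Upper inclusion.} Restricting the recursive definitions to $T_3$ gives $a_1\restr{T_3}=(\id,\,a_3\restr{T_2})$ and $a_2\restr{T_3}=(\id,\,a_1\restr{T_2})\sigma$, where $a_1\restr{T_2}=(\id,\sigma)$ and $r:=a_3\restr{T_2}=(a_2\restr{T_1},\id)\sigma$ has order four. Substituting these into Definition~\ref{def:sgn3} and using $\sgn(\sigma)=-1$, I would check that $\sgn_3(a_1\restr{T_3})=1$ and $\sgn_3(a_2\restr{T_3})=1$ (for $a_1\restr{T_3}$ all the sign factors equal $+1$, while for $a_2\restr{T_3}$ the odd outer transposition is cancelled by the single odd entry of the factor $(\id,\sigma)$). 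As $\sgn_3$ is a homomorphism and $a_3=(a_1a_2)^{-1}$ by Lemma~\ref{a-rel}, the remaining generator is automatically in the kernel. Thus $G_3\subseteq\ker(\sgn_3)$ and in particular $|G_3|\le 64$.

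\emph{Lower bound.} Here I would use self-similarity. The level-one action gives a surjection $G_3\to S_2$ (surjective because $a_2$ acts by $\sigma$ on the first level), whose kernel is $H:=G_3\cap(W_2\times W_2)$, so $|G_3|=2|H|$. It therefore suffices to produce enough elements of $H$. A short computation in the wreath product gives three elements of $H$: $a_1=(\id,r)$, the conjugate $a_2a_1a_2^{-1}=(r,\id)$, and the square $a_2^2=(q,q)$ with $q=(\id,\sigma)$. Now $(\id,r)$ and $(r,\id)$ generate $\langle r\rangle\times\langle r\rangle\cong C_4\times C_4$ of order $16$, while $q$ is a reflection of $W_2\cong D_4$ and hence $q\notin\langle r\rangle$, so $(q,q)\notin\langle r\rangle\times\langle r\rangle$. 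Consequently $H$ contains a subgroup strictly larger than $C_4\times C_4$, forcing $|H|\ge 32$ and $|G_3|=2|H|\ge 64$.

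Combining the two parts yields $|G_3|=64=|\ker(\sgn_3)|$, which together with $G_3\subseteq\ker(\sgn_3)$ gives $G_3=\ker(\sgn_3)$. The main obstacle is the lower bound: everything hinges on correctly evaluating the conjugate $a_2a_1a_2^{-1}$ and the square $a_2^2$ inside $W_2\times W_2$ and on checking that the resulting elements escape $\langle r\rangle\times\langle r\rangle$, which is exactly what pins $|H|$ at $32$ rather than something smaller. As a consistency check the value $|G_3|=2^6$ agrees with Conjecture~\ref{conj:G} at $n=3$ (it predicts $\log_2|G_3|=2\log_2|G_2|=6$), and the order can alternatively be confirmed directly in MAGMA as elsewhere in the paper.
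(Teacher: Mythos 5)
Your proof is correct, and it differs from the paper's in a genuine (if modest) way. The inclusion $G_3\subseteq\ker(\sgn_3)$ is handled exactly as in the paper, by evaluating $\sgn_3$ on the generators; your shortcut of deducing $\sgn_3(a_3\restr{T_3})=1$ from the relation $a_1a_2a_3=\id$ of Lemma~\ref{a-rel} is a small economy over the paper's direct check. Where you diverge is the order count. The paper projects one level down, $\pi_{3,2}\colon G_3\to G_2$, invokes Proposition~\ref{G-E isom. n=2} to get $|G_2|=8$, and shows the kernel $V_3$ of this projection has order $8$ by exhibiting $a_1^2$, $a_2^2$ and $a_3a_1^2a_3^{-1}$ as explicit permutations ($(26)(48)$, $(37)(48)$, $(15)(37)$) and computing the order of the group they generate. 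You instead split off only the first level, $G_3\to S_2$, with kernel $H=G_3\cap(W_2\times W_2)$, and bound $|H|\ge 32$ by producing $a_1\restr{T_3}=(\id,r)$, $a_2a_1a_2^{-1}\restr{T_3}=(r,\id)$ (your wreath-product computation here is correct) and $a_2^2\restr{T_3}=(q,q)$, then noting that $(q,q)\notin\langle r\rangle\times\langle r\rangle\cong C_4\times C_4$, so by Lagrange the subgroup they generate has order at least $32$; the sandwich against the upper bound $|{\ker(\sgn_3)}|=64$ finishes. Your route buys two things: it is independent of Proposition~\ref{G-E isom. n=2}, and it replaces the paper's explicit permutation computation (together with its unjustified side remark that $V_3$ is elementary abelian) with a clean index argument. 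The one step you should make explicit is the surjectivity of $\sgn_3$, which you assert but do not verify and which is needed to conclude $|{\ker(\sgn_3)}|=64$; it is immediate, since $\sgn_3\bigl((\id,\id)\sigma\bigr)=\sgn(\sigma)=-1$.
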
	
	
	\begin{proof}

		Let $\pi_3: W \rightarrow W_3$ be the natural projection map. Then by definition, $G_3=\im (\pi_3\restr{G})$.  
	
	We know that $G_3=\langle a_1\restr{T_3}, a_2\restr{T_3}, a_3\restr{T_3} \rangle$ where $a_1, a_2, a_3$ defined as above.  Furthermore, $a_1\restr{T_3}=(\id,a_3\restr{T_2})$, $a_3\restr{T_2}=(a_2\restr{T_1},\id)\sigma$ and $a_2\restr{T_1}$ acts as $\sigma$ on $T_1$.  Hence,
	\begin{align*}
	\sgn_3(a_1\restr{T_3})&=s(a_3\restr{T_2})\\
	           &=\sgn(a_2\restr{T_1})\sgn (\sigma) \\
	           &=\sgn(\sigma)^2=(-1)^2.
	\end{align*}
	
	Similarly, $\sgn_3(a_2\restr{T_3})=\sgn_3(a_3\restr{T_3})=1$.  Thus, $G_3\leq W_3\cap\ker (\sgn_3)$ and the latter group is a subgroup of $W_3$ of index $2$. Since the order of $W_3=\Aut(T_3)=2^7$,  the order of $W_3\cap\ker (\sgn_3)$ is $64$.
	
	 In order to finish the proof, it is enough to show that the size of $G_3$ is $64$. 
Note that, by Lemma \ref{G-E isom. n=2}, $G_2=W_2$ and therefore $\#G_2=8.$ If the kernel of the natural projection map $(\pi_{3,2})_{\restr{G_3}}$ from $G_3$ to $ G_2$ is $8$ then we get $\#G_3=64$. Now, we  explicitly list these $8$ elements in the kernel $V_3$ of $(\pi_{3,2})_{\restr{G_3}}:G_3\rightarrow G_2$. 
	
	Recall that $a_1=(\id,a_3)$ and hence $a_1^2=(\id,a_3^2)$ where $a_3^2=[(a_2,\id)\sigma]^2=(a_2,a_2)$. We find that 
$a_1^2\restr{T_3}=(\id,a_3^2\restr{T_2})$ and $a_3^2\restr{T_2}=(a_2\restr{T_1},a_2\restr{T_1})$. This shows that $a_1^2$ is in $V_3$ and using the labeling of the leaves given in equation~\ref{eq:leafnumber} and the embedding given in ~\ref{eq:iota}, we find that $a_1^2\restr{T_3}=(26)(48)$.
  \begin{center}    
  \includegraphics{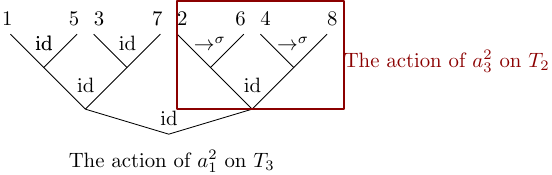}
  \end{center}

Doing a similar calculation, we find that $a_2^2\restr{T_3}=(37)(48)$. According to our labeling (as can be seen on the picture below) $a_2^2$ fixes every leaf in $T_2$ therefore $a_2^2$ is in $V_3$. Another way to see this is as follows: \[ a_2^2\restr{T_2}=(a_1\restr{T_1}, a_1\restr{T_1})=(\id,\id)=\id \]
 
	 \begin{center}   
   \includegraphics{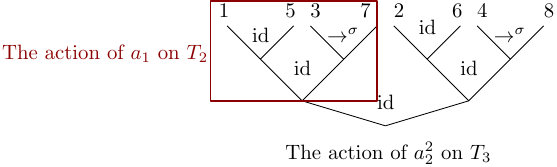}
     \end{center}

Notice that we need $8$ elements in the kernel $V_3$ and since $V_3$ is elementary abelian, we just need $3$ generators.  We already have $a_1^2, a_2^2$ and we need to find one more generator. We consider the element $a_3 a_1^2 a_3^{-1}$.

\begin{align*}
   a_3 a_1^2 a_3^{-1}\restr{T_3}&=(a_2\restr{T_2},\id)\sigma(\id,a_3^2\restr{T_2})\sigma(a_2^{-1}\restr{T_2},\id)=(a_2 a_3^2 a_2^{-1}\restr{T_2},\id) \\
                          a_2 a_3^2 a_2^{-1}\restr{T_2}&= (\id,a_1\restr{T_1})\sigma (a_2\restr{T_1},a_2\restr{T_1})(a_1^{-1},\id)\sigma =(a_2\restr{T_1},a_1 a_2  a_1^{-1}\restr{T_1}) \end{align*}
One then calculates $a_3 a_1^2 a_3^{-1}=(15)(37)$ and finds that $\# \langle a_3 a_1^2 a_3^{-1}, a_1^2, a_2^2\rangle=8$ and hence $\#G_3=\#G_2\cdot 8=64$.

Note that since the kernel is a normal subgroup and $a_1^2\in V_3=\ker((\pi_{3,2})_{\restr{G_3}}:G_3\rightarrow G_2)$, then $a_3 a_1^2 a_3^{-1}$ is in $V_3$ as well.		
\end{proof}

Now we calculate the orders of the generators for any $n\geq 1$ and show in particular that the generators $a_1,a_2,a_3$ of $G$ have infinite order.
\begin{proposition}\label{orders}
The orders of $a_1, a_2, a_3$ restricted to $T_n$, $\ord(a_i\restr{T_n})$, are $2^{m_{i,n}}$ where
\[
m_{1,n}=\begin{cases}  2\floor*{\frac{n}{3}}+1 & \text{ if }n\equiv 2 \pmod 3 ,\\
                       2\floor*{\frac{n}{3}}&\text{ otherwise}
\end{cases}
\]
\[
m_{2,n}=\begin{cases}  2\floor*{\frac{n}{3}} & \text{ if }n\equiv 0 \pmod 3 ,\\
2\floor*{\frac{n}{3}}+1&\text{ otherwise}
\end{cases}
\]
\[
m_{3,n}=\begin{cases}  2\floor*{\frac{n}{3}}+1 & \text{ if }n\equiv 1 \pmod 3 ,\\
2\floor*{\frac{n-1}{3}}+2&\text{ otherwise}
\end{cases}
\]
\end{proposition}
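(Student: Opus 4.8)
The plan is to reduce everything to the recursive wreath-product structure $a_1=(\id,a_3)$, $a_2=(\id,a_1)\sigma$, $a_3=(a_2,\id)\sigma$ and to exploit the elementary order computation in $W_n\simeq W_{n-1}\wr S_2$. Namely, for $w=(u,v)\sigma$ with $\sigma$ the nontrivial level-one permutation, relation \eqref{eq:relations} gives $w^2=(uv,vu)$; since $uv$ and $vu$ are conjugate in $W_{n-1}$ they have the same order, so $\ord(w)=2\,\ord(uv)$ \emph{exactly}. When instead $w=(u,v)$ fixes the first level, $\ord(w)=\mathrm{lcm}(\ord(u),\ord(v))$. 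Note that every element of $W_n$ has order a power of $2$, since $W_n$ is a $2$-group.

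First I would apply this to each generator restricted to $T_n$. As $a_1\restr{T_n}=(\id,a_3\restr{T_{n-1}})$ fixes level one, $\ord(a_1\restr{T_n})=\ord(a_3\restr{T_{n-1}})$. As $a_2\restr{T_n}=(\id,a_1\restr{T_{n-1}})\sigma$ and $a_3\restr{T_n}=(a_2\restr{T_{n-1}},\id)\sigma$ both act by $\sigma$ on the first level, the computation above yields $\ord(a_2\restr{T_n})=2\,\ord(a_1\restr{T_{n-1}})$ and $\ord(a_3\restr{T_n})=2\,\ord(a_2\restr{T_{n-1}})$. Writing $\ord(a_i\restr{T_n})=2^{m_{i,n}}$, these become the three coupled recursions
\[ m_{1,n}=m_{3,n-1},\qquad m_{2,n}=1+m_{1,n-1},\qquad m_{3,n}=1+m_{2,n-1}\qquad(n\geq 2). \]

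Next I would decouple them. Substituting twice gives, for $n\geq 4$, $m_{1,n}=m_{3,n-1}=1+m_{2,n-2}=2+m_{1,n-3}$, and similarly $m_{2,n}=2+m_{2,n-3}$ and $m_{3,n}=2+m_{3,n-3}$, so each sequence is a period-$3$ arithmetic progression with common difference $2$. I would then record the base cases directly from the definitions: $a_1\restr{T_1}=\id$ and $a_2\restr{T_1}=a_3\restr{T_1}=\sigma$, and the single-step recursions propagate these to
\[ (m_{1,n},m_{2,n},m_{3,n})=(0,1,1),\;(1,1,2),\;(2,2,2)\quad\text{for }n=1,2,3, \]
one value in each residue class modulo $3$. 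Finally, since the three closed forms in the statement visibly satisfy $m(n)=m(n-3)+2$ (using $\floor*{n/3}=\floor*{(n-3)/3}+1$ and the analogous identity for $\floor*{(n-1)/3}$) and agree with these base values, they are the unique solutions of the recursion, which proves the proposition by induction on $n$.

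There is no serious obstacle here. The only points that require care are the justification that $uv$ and $vu$ have equal order (via conjugacy), so that the factor of $2$ is an exact equality rather than merely an upper bound, and the bookkeeping of the three residue classes modulo $3$ when matching the closed forms to the three base cases; both are routine.
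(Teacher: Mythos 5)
Your proof is correct and takes essentially the same route as the paper's: both derive the coupled recursions $m_{1,n}=m_{3,n-1}$, $m_{2,n}=m_{1,n-1}+1$, $m_{3,n}=m_{2,n-1}+1$ from the wreath-product form of the generators and then verify the closed formulas by induction from the base case. The only cosmetic difference is that you decouple the system into three period-$3$ arithmetic progressions $m(n)=m(n-3)+2$ checked against the base cases $n=1,2,3$, while the paper substitutes the inductive hypothesis directly and handles the floor-function identities case by case; your justification that $\ord((u,v)\sigma)=2\,\ord(uv)$ holds exactly (via conjugacy of $uv$ and $vu$ and the fact that $W_n$ is a $2$-group) is if anything more explicit than the paper's.
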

\begin{proof}
	Let $n=1$, then $a_1\restr{T_1}=\id$ implying $\ord(a_1\restr{T_1})=1$ and $m_{1,1}=0$.  We also know $a_2\restr{T_1}=a_3\restr{T_1}=\sigma$, hence $\ord(a_2\restr{T_1})=\ord(a_3\restr{T_1})=2$ and $m_{2,1}=m_{3,1}=1$.
	
	Now assume $m_{i,n-1}$ is given as above for all $1\leq i \leq 3$.  Since $a_1\restr{T_n}=(\id,a_3\restr{T_{n-1}})$, $\ord(a_1\restr{T_n})=\ord(a_3\restr{T_{n-1}})$.  Similarly, $a_2\restr{T_n}=(\id,a_1\restr{T_{n-1}}\sigma)$ and one can show that $\ord(a_2\restr{T_n})=2\ord(a_1\restr{T_{n-1}})$.  Also, we have 
	$a_3\restr{T_n}=(a_2\restr{T_{n-1}},\id)$ and $\ord(a_3\restr{T_n})=2\ord(a_2\restr{T_{n-1}})$. 
	
	 To summarize, we obtain the below relation:
	\begin{align*} \
	   m_{1,n}&=m_{3,n-1}\\
	   m_{2,n}&=m_{1,n-1}+1\\
	   m_{3,n}&=m_{2,n-1}+1
	\end{align*} 
	
	Hence, by induction
	
	\[
	m_{1,n}=m_{3,n-1}=\begin{cases}  2\floor*{\frac{n-1}{3}}+1 & \text{ if }n-1\equiv 1 \pmod 3 ,\\
	2\floor*{\frac{n-2}{3}}+2&\text{ otherwise}
	\end{cases}
	\]
	\[
	m_{2,n}=m_{1,n-1}+1=\begin{cases}  2\floor*{\frac{n-1}{3}}+2 & \text{ if }n-1\equiv 2 \pmod 3 ,\\
	2\floor*{\frac{n-1}{3}}+1&\text{ otherwise}
	\end{cases}
	\]
	\[
	m_{3,n}=m_{2,n-1}+1=\begin{cases}  2\floor*{\frac{n-1}{3}}+1 & \text{ if }n-1\equiv 0 \pmod 3 ,\\
	2\floor*{\frac{n-1}{3}}+2&\text{ otherwise}
	\end{cases}
	\] 
	
	If $n\equiv 2 \pmod 3$, then $\displaystyle \floor*{\frac{n-1}{3}}=\floor*{\frac{n}{3}}$.\\
	
	If $n\equiv 1 \pmod 3$, then $\displaystyle \floor*{\frac{n-1}{3}}=\floor*{\frac{n}{3}}$ and  $\displaystyle \floor*{\frac{n-2}{3}}=\floor*{\frac{n-1}{3}}-1=\floor*{\frac{n}{3}}-1$.\\
	
	If $n\equiv 0 \pmod 3$, then $\displaystyle \floor*{\frac{n-1}{3}}=\floor*{\frac{n}{3}}-1$ and  $\displaystyle \floor*{\frac{n-2}{3}}=\floor*{\frac{n}{3}}-1$.\\
	
  These observations prove the given formulae.	
	
\end{proof}

The values of $m_{i,n}$ for $n\leq 18$ can be seen in Table~\ref{table:order}.	
\begin{equation}\label{table:order}
\begin{array}{ ||c|c|c|c|c|c|c|c|c|c|c|c|c|c|c|c|c|c|c|| } 
 \hline 
 n & 1 &2 & 3&4 &5 & 6&7 &8 &9 & 10& 11 & 12 & 13&14&15&16&17 & 18\\ 
 \hline \hline
  m_{1,n} & 0 &1 &2 &2 & 3&4 &4 &5 &6 &6 &7 &8 &8&9& 10 &10& 11 &12 \\ 
 \hline
  m_{2,n} & 1 & 1 & 2&3 &3 &4 &5 &5 &6 &7 &7 &8 &9&9&10&11 &11 &12 \\ 
 \hline 
m_{3,n}& 1 & 2 & 2& 3&4 &4 &5 & 6& 6& 7& 8&8 &9 &10 &10 &11 & 12 &12\\ 
 \hline
\end{array}
\end{equation}

\begin{corollary}
For any $i \in \{1,2,3\}$, the subgroup of $W$ generated by $a_i$ is isomorphic to $\ZZ_2$, the $2$-adic integers.
\end{corollary}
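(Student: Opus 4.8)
The plan is to leverage the order computations of Proposition~\ref{orders} together with the profinite structure of $W$. Since $W=\Aut(T)=\varprojlim_n W_n$ is a profinite (indeed pro-$2$) group, the subgroup generated by $a_i$ is to be understood as the closed subgroup $\overline{\langle a_i\rangle}$, and the first thing I would record is that this closure is procyclic and coincides with the inverse limit of its finite images under the projections $\pi_n$.

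First I would observe that under $\pi_n:W\to W_n$ the image of $\overline{\langle a_i\rangle}$ is exactly $\langle a_i\restr{T_n}\rangle$, the finite cyclic group generated by $a_i\restr{T_n}$. By Proposition~\ref{orders} this group is cyclic of order $2^{m_{i,n}}$. The projections are compatible with the tower $\cdots\to W_{n+1}\to W_n\to\cdots$, and since $\pi_{n+1,n}(a_i\restr{T_{n+1}})=a_i\restr{T_n}$ sends a generator to a generator, the transition maps
\[
\langle a_i\restr{T_{n+1}}\rangle\;\cong\;\ZZ/2^{m_{i,n+1}}\ZZ\;\twoheadrightarrow\;\ZZ/2^{m_{i,n}}\ZZ\;\cong\;\langle a_i\restr{T_n}\rangle
\]
are surjective. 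Hence
\[
\overline{\langle a_i\rangle}=\varprojlim_n\langle a_i\restr{T_n}\rangle\cong\varprojlim_n\ZZ/2^{m_{i,n}}\ZZ.
\]

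Next I would verify that the exponents are unbounded: reading off the formulas in Proposition~\ref{orders}, one has $m_{i,n}\geq 2\floor*{\frac{n}{3}}$ for each $i\in\{1,2,3\}$, so $m_{i,n}\to\infty$ as $n\to\infty$. An inverse limit of finite cyclic $2$-groups taken along surjective transition maps whose orders grow without bound is isomorphic to $\ZZ_2$; concretely, restricting to a cofinal subsequence on which $m_{i,n}$ is strictly increasing identifies the limit with the standard presentation $\ZZ_2=\varprojlim_k\ZZ/2^k\ZZ$. This yields $\overline{\langle a_i\rangle}\cong\ZZ_2$ and proves the corollary.

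The substantive content is already supplied by Proposition~\ref{orders}, so there is no genuine obstacle; the only points deserving a little care are the surjectivity of the transition maps (guaranteeing that no spurious finite direct factor survives in the limit) and the passage to a strictly increasing cofinal subsequence of the $m_{i,n}$ so that the tower matches the standard $\ZZ_2$ tower exactly.
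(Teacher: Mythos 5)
Your proof is correct and follows exactly the route the paper intends: the corollary is stated in the paper without an explicit proof, as an immediate consequence of Proposition~\ref{orders}, and your argument (identifying the closed subgroup $\overline{\langle a_i\rangle}$ with $\varprojlim_n \langle a_i\restr{T_n}\rangle$, noting surjective transition maps and unbounded exponents $m_{i,n}\geq 2\floor*{\frac{n}{3}}$, hence the limit is $\ZZ_2$) is precisely the implicit reasoning. The only added value in your write-up is making the standard profinite bookkeeping explicit, which is a faithful completion of the paper's intended proof rather than a different approach.
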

%\begin{proof}We will prove it explicitly for $a_1$. The proof is similar for $a_2$ and $a_3$.
%Let $H_1=\langle a_1 \rangle$. Then $H$ is a pro-cyclic group. By Proposition~\ref{orders}, for each $j\geq 1$, there is a $n$ such that $\langle a_1\restr{T_n}\rangle$ is isomorphic to $\ZZ/2^j\ZZ$. We can make this explicit: if $j=2k+1$ for some integer $k$, then take $n=3k+2$, if $j=2k$, then let $n=3k+1$. Hence $H_1$ is the limit of the groups $\ZZ/2^j\ZZ$ with the usual projection maps.
%\end{proof}

Next lemma can be used to produce elements in $G_n$.

\begin{lemma}\label{lem:compG}
For any $x\in G$, the element $(x,x)$ is in $G$.
\end{lemma}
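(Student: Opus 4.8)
The plan is to exploit that the diagonal map $\delta\colon W\to W$, $\delta(x)=(x,x)$, is a continuous group homomorphism: it is the composition of the diagonal $x\mapsto (x,x)\in W\times W$ with the embedding $W\times W\hookrightarrow W$, and on the image $W\times W$ (the stabilizer of the first level) the group law is componentwise. Since $G$ is closed in $W$ and topologically generated by $a_1,a_2,a_3$, it suffices to prove that $\delta(a_i)=(a_i,a_i)$ lies in $G$ for each $i$; then $\delta$ carries the dense subgroup $\langle a_1,a_2,a_3\rangle$ into $G$, and by continuity $\delta(G)\subseteq G$, which is exactly the claim.

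First I would compute the squares of the generators using the wreath relation~\ref{eq:relations}. From $a_2=(\id,a_1)\sigma$ one gets $a_2^2=(\id,a_1)\sigma(\id,a_1)\sigma=(a_1,a_1)$, and from $a_3=(a_2,\id)\sigma$ one gets $a_3^2=(a_2,\id)\sigma(a_2,\id)\sigma=(a_2,a_2)$. This already produces two of the three diagonal elements: $(a_1,a_1)=a_2^2\in G$ and $(a_2,a_2)=a_3^2\in G$.

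For the remaining generator I would invoke Lemma~\ref{a-rel}, which gives $a_3=(a_1a_2)^{-1}$. Because $\delta$ is a homomorphism, $(a_3,a_3)=\delta(a_1a_2)^{-1}=\bigl((a_1,a_1)(a_2,a_2)\bigr)^{-1}=(a_2^2a_3^2)^{-1}=a_3^{-2}a_2^{-2}$, which is a product of elements of $G$ and hence lies in $G$. This finishes the verification for all three generators and therefore the proof.

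The computation itself is routine once the squares are written out, so there is no serious obstacle; the two points worth flagging are conceptual rather than technical. One is the reduction to generators: I must use that $\delta$ is continuous and $G$ closed, so that topological generation by $a_1,a_2,a_3$ really is enough. The other, and the actual key, is that the three diagonal elements are \emph{not} independently available—only $(a_1,a_1)$ and $(a_2,a_2)$ fall out directly as squares, and $(a_3,a_3)$ is forced into $G$ precisely by the product relation $a_1a_2a_3=\id$, which lets me express $a_3$ through $a_1$ and $a_2$. Without that relation there would be no evident way to realize $(a_3,a_3)$ inside $G$.
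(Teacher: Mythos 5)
Your proof is correct and follows essentially the same route as the paper: the paper's proof also rests on the two computations $a_2^2=(a_1,a_1)$ and $a_3^2=(a_2,a_2)$, together with the fact that (via the relation $a_1a_2a_3=\id$ from Lemma~\ref{a-rel}) $G$ is topologically generated by $a_1,a_2$ alone, so the diagonal homomorphism carries $G$ into $G$. Your write-up merely makes explicit the continuity/closedness argument and the handling of $(a_3,a_3)$ that the paper leaves implicit.
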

\begin{proof}
We have $a_2^2=(a_1,a_1)$ and $a_3^2=(a_2,a_2)$. Since $G$ is generated by $a_1,a_2$, the result follows.
\end{proof}

\begin{definition}
For $i \in \{1,2,3\}$ , let $N_i$ be the normal closure of the subgroup of $G$ generated by $a_i$ in $G$. We denote the image of $N_i$ in $G_n$ under the natural projection map $\pi_n: W \rightarrow W_n$ as $N_{i,n}$.
\end{definition}

For $n\geq 2$, note that the group $G_n=N_{i,n}\cdot\langle a_j\restr{T_n}\rangle$ for $i \ne j$ and $i,j \in \{1,2,3\}$. Next proposition proves that $G/N_{i}$ has infinite order and it is pro-cyclic for all $i \in \{1,2,3\}$. In order to prove the proposition, we need Lemma~\ref{lem:N1}, Lemma~\ref{lem:N23}, and Lemma~\ref{lem:Gn1} which describe the iterated structure of $N_{i,n}$ for $i=1,2,3$.

\begin{proposition}\label{G/Ninfinite}
For $n \geq 2$ and for $i\in \{1,2,3\}$, the quotient $G_n/N_{i,n}$ is a cyclic group and
\begin{enumerate} 
\item   $|G_{n}/N_{i,n}| \leq 2|G_{n-1}/N_{i,n-1}| $.
\item $ |G_n/N_{3,n}| \geq |G_{n-1}/N_{2,n-1}|$,
\item 
$ |G_n/N_{2,n}| \geq |G_{n-1}/N_{1,n-1}|$,
\item $ |G_n/N_{1,n}| \geq 2\cdot |G_{n-1}/N_{3,n-1}|$. 
\end{enumerate}
\end{proposition}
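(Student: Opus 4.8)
The plan is to treat the cyclicity and the four numerical bounds separately, exploiting the self-similar recursion $a_1=(\id,a_3)$, $a_2=(\id,a_1)\sigma$, $a_3=(a_2,\id)\sigma$ at every step. For the cyclicity, note that by Lemma~\ref{a-rel} we have $a_1a_2a_3=\id$, so in $G_n/N_{i,n}$ the image of $a_i$ is trivial and the images of the other two generators are mutually inverse; hence $G_n/N_{i,n}$ is generated by a single element and is cyclic (this also recovers the remark that $G_n=N_{i,n}\cdot\langle a_j\restr{T_n}\rangle$). Throughout I would write $G_n^{(1)}$ for the index-two stabilizer of the first level, so that $G_n^{(1)}\subseteq G_{n-1}\times G_{n-1}$ by the wreath decomposition \eqref{eq:relations}, with coordinate projections $p_1,p_2\colon G_n^{(1)}\to G_{n-1}$; these are surjective since $p_1(a_2^2)=a_1$, $p_1(a_2a_1a_2^{-1})=a_3$ and $\langle a_1,a_3\rangle=G_{n-1}$. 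The crucial dichotomy is that $a_1$ is even while $a_2,a_3$ are odd: thus $N_{1,n}\subseteq G_n^{(1)}$, whereas for $i\in\{2,3\}$ one has $G_n^{(1)}N_{i,n}=G_n$ and so $G_n/N_{i,n}\cong G_n^{(1)}/(N_{i,n}\cap G_n^{(1)})$. This dichotomy is exactly what produces the extra factor of $2$ in item (4).

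For the upper bound (1), the restriction $\pi_{n,n-1}$ sends $a_i\restr{T_n}$ to $a_i\restr{T_{n-1}}$ and therefore carries $N_{i,n}$ onto $N_{i,n-1}$, inducing a surjection $G_n/N_{i,n}\twoheadrightarrow G_{n-1}/N_{i,n-1}$. Its kernel is a quotient of $\ker(\pi_{n,n-1})|_{G_n}$, which is an elementary abelian $2$-group (it only permutes the two children of each level-$(n-1)$ vertex). Being simultaneously a subgroup of the cyclic group $G_n/N_{i,n}$, this kernel is cyclic \emph{and} elementary abelian, hence has order at most $2$, which is precisely (1).

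For the lower bounds I would use projections matched to the ``content'' of each generator. For (4), since $N_{1,n}\subseteq G_n^{(1)}$ and every conjugate of $a_1=(\id,a_3)$ has the form $(\id,\ast)$ or $(\ast,\id)$ with $\ast$ a conjugate of $a_3$, one checks that $p_2(N_{1,n})=N_{3,n-1}$ exactly; thus $p_2$ descends to a surjection $G_n^{(1)}/N_{1,n}\twoheadrightarrow G_{n-1}/N_{3,n-1}$, and combined with $[G_n/N_{1,n}:G_n^{(1)}/N_{1,n}]=2$ this gives $|G_n/N_{1,n}|=2\,|G_n^{(1)}/N_{1,n}|\ge 2\,|G_{n-1}/N_{3,n-1}|$. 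For the odd generators a single coordinate projection overflows (conjugates genuinely involve both coordinates), so instead I map into the relevant \emph{abelian} quotient: as $G_{n-1}/N_{1,n-1}$ is abelian, $\theta\colon G_n^{(1)}\to G_{n-1}/N_{1,n-1}$, $(u,v)\mapsto\overline{u}\,\overline{v}$, is a homomorphism. Using $a_2^2=(a_1,a_1)$ (Lemma~\ref{lem:compG}) and the fact that each $(ga_2g^{-1})a_2^{-1}$ has antidiagonal form $(P,Q)$ with $\overline{P}\,\overline{Q}=1$, one sees $\theta$ annihilates $N_{2,n}\cap G_n^{(1)}$; since $\theta(\id,a_3)=\overline{a_3}$ generates $G_{n-1}/N_{1,n-1}$, the induced map is a surjection $G_n/N_{2,n}\twoheadrightarrow G_{n-1}/N_{1,n-1}$, giving (3). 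Item (2) is the same argument with $a_3$ in place of $a_2$, target $G_{n-1}/N_{2,n-1}$, and $a_3^2=(a_2,a_2)$.

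The substance of the proof lies in the structural facts identifying the images of the normal closures exactly — that $p_2(N_{1,n})$ equals $N_{3,n-1}$ and that $\theta$ (resp. its analogue for $a_3$) kills the \emph{entire} even part $N_{i,n}\cap G_n^{(1)}$, not just its obvious generators. I would isolate these as Lemmas~\ref{lem:N1}, \ref{lem:N23}, \ref{lem:Gn1}: first a Reidemeister–Schreier description showing $N_{i,n}\cap G_n^{(1)}$ is generated by $a_i^2$ together with $\{(ga_ig^{-1})a_i^{-1}:g\in G_n\}$, and then an evaluation of the projections on this generating set via the wreath relation \eqref{eq:relations}. The main obstacle is precisely this bookkeeping: the inclusions ``$\subseteq$'' for the images are immediate, but the reverse inclusions and the surjectivity statements — i.e.\ verifying that no unexpected collapsing occurs in the projected quotients — are where the careful recursive computation is required.
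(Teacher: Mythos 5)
Your proposal is correct, and most of its skeleton coincides with the paper's own proof: the cyclicity argument (killing $a_i$ makes the other two generators mutually inverse), the proof of (1) (the kernel of the induced surjection $G_n/N_{i,n}\to G_{n-1}/N_{i,n-1}$ is a quotient of the elementary abelian kernel of $\pi_{n-1}\restr{G_n}$ sitting inside a cyclic group, hence has order at most $2$), and the structural input for (4) (your claim $p_2(N_{1,n})=N_{3,n-1}$ is exactly the content of the paper's Lemma~\ref{lem:N1}, $N_{1,n}=N_{3,n-1}\times N_{3,n-1}$) all match. Where you genuinely diverge is in (2) and (3). The paper argues via orders of generators: $G_n/N_{3,n}$ is generated by the image of $a_1$, and if $a_1^k\restr{T_n}=(\id,a_3^k\restr{T_{n-1}})\in N_{3,n}$ then Lemma~\ref{lem:N23} --- proved, following Pink, by exhibiting $(N_{i-1}\times N_{i-1})\cdot\langle(a_i,a_i^{-1})\rangle$ as a normal subgroup of $(G\times G)\rtimes\langle\sigma\rangle$ with abelian quotient --- forces $a_3^k\in N_{2,n-1}$. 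You instead build the coordinate-product homomorphism $\theta(u,v)=\overline{u}\,\overline{v}$ into the cyclic (hence abelian) quotient $G_{n-1}/N_{i-1,n-1}$, annihilate the Reidemeister--Schreier generators of $N_{i,n}\cap G_n^{(1)}$ by direct computation with the wreath relation, and conclude via a surjection of quotients; your computations (e.g.\ $(ga_2g^{-1})a_2^{-1}=(uv^{-1},\,va_1u^{-1}a_1^{-1})$ for even $g$, with $\overline{P}\,\overline{Q}=1$ since $\overline{a_1}=1$) are correct. The two routes hinge on the same abelianness, and your key claim is in fact implied by Lemma~\ref{lem:N23}, since $\theta$ visibly kills $(N_{i-1}\times N_{i-1})\cdot\langle(a_i,a_i^{-1})\rangle$; what your version buys is a self-contained verification that avoids proving the stronger containment for the full normal closure, at the price of the Schreier bookkeeping, whereas the paper's lemma is a cleaner structural statement about $N_i$ itself. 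One trivial repair: the surjectivity of $p_2$ (which you need for (4)) does not follow from the elements you list, which only witness surjectivity of $p_1$; use $p_2(a_1)=a_3$ and $p_2(a_2^2)=a_1$ instead.
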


\begin{proof}
Since $G_n$ is generated by $a_i\restr{T_n}$ and $a_j\restr{T_n}$ for any $ i\neq j $, the quotient $G_n/N_{i,n}$ is a quotient of  $\langle a_j\restr{T_n}\rangle$, hence it is cyclic.  

Let $V_n$ denote the kernel of the projection map $\pi_{n-1}: G_n \to G_{n-1}$.%\yas{tanım gereği $\pi_{n-1}$ olmayacak mı? Tanımlarken rangedeki indeksi esas almışız.  Bu mapleri $G_n$'lere restrict ederken $(\pi_{n})\restr{}$ şeklinde yazmışız, sayfa 4, son satır.  SAyfa 14 başında note that since the diye başlayan kısımda da restri. notasyon var.} 
Since $\pi_{n-1}(N_{i,n})$ is contained in $N_{i,n-1}$, we have a projection map 
$\psi_{n-1}: G_n/N_{i,n} \to  G_{n-1}/N_{i,n-1}$. We will show that the kernel of $\psi_{n-1}$ is isomorphic to the quotient $V_n/(V_n \cap N_{i,n})$. There is a natural injective map $V_n/(V_n \cap N_{i,n}) \to G_n/N_{i,n}$. In fact the image of this map is $ker(\psi_{n-1})$. We will show that it is surjective onto $ker(\psi_{n-1})$. Let $uN_{i,n}$ be in $\ker(\psi_{n-1})$. Then there is an $x \in N_{i,n}$ such that $\pi_{n-1}(x)=\pi_{n-1}(u)$ since $ \pi_{n-1}(u)$ is in $N_{i,n-1}$. Hence we obtain that $ux^{-1}$ is in $V_n$ which implies that $uN_{i,n}=ux^{-1}N_{i,n}$ which proves our claim.

The group $V_n/(V_n \cap N_{i,n})$ is at most of order two since it is a subgroup of the cyclic group $G_n/N_{i,n}$ and $V_n$ is an elementary abelian $2$-group. Hence the first claim follows.

Assume that $a_1^k\restr{T_n}$ is in $N_{3,n}$ for some $k$. Then
$a_1\restr{T_n}^k=(\id, a_3^k\restr{T_{n-1}}) \in N_{3,n}$ and by Lemma~\ref{lem:N23}, we have $a_3^k\restr{T_{n-1}} \in N_{2,n-1}$. Hence the order of $G_n/N_{3,n}$ is at least the order of  $G_{n-1}/N_{2,n-1}$. A similar argument holds to prove that  $|G_n/N_{2,n}| \geq |G_{n-1}/N_{1,n-1}|$. Assume $a_2^k\restr{T_n} \in N_{1,n}$ for some $k$. By Lemma~\ref{lem:N1}, $a_1^{k/2}$ is in $N_{3,n-1}$. This proves that $|G_n/N_{1,n}| \geq 2\cdot |G_{n-1}/N_{3,n-1}|$.
\end{proof}

 \begin{lemma}\label{lem:N1}
For any $n\geq 2$,  $N_{1,n}=N_{3,n-1} \times N_{3,n-1}$.
\end{lemma}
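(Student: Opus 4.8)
The plan is to prove the two inclusions separately, after first isolating the structural facts I will need. From the wreath relation~\eqref{eq:relations} and the definitions $a_1=(\id,a_3)$, $a_2=(\id,a_1)\sigma$, $a_3=(a_2,\id)\sigma$, I would record the identities $a_2^2=(a_1,a_1)$ and $a_3^2=(a_2,a_2)$ (already computed in the proof of Lemma~\ref{lem:compG}) together with the key computation $a_2a_1a_2^{-1}=(a_3,\id)$, which follows from a short application of~\eqref{eq:relations}. Since $a_1\restr{T_1}=\id$, the generator $a_1$ lies in the first-level stabilizer $H:=\mathrm{Stab}_G(1)=\ker(G\to S_2)$, and $H$ is normal in $G$; hence the normal closure $N_1$ is contained in $H$. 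In particular every element of $N_{1,n}$ fixes the first level, so it is a pair $(u,v)$ with $u,v\in G_{n-1}$, which is what makes the product decomposition $N_{3,n-1}\times N_{3,n-1}$ meaningful.

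The key structural input is that $G$ is recurrent, i.e. the two coordinate projections $p_1,p_2\colon H\to G$ are surjective. To see this I would use the elements of $H$ just listed: $a_2^2=(a_1,a_1)$, $a_3^2=(a_2,a_2)$ and $a_2a_1a_2^{-1}=(a_3,\id)$ have first coordinates $a_1,a_2,a_3$, so $p_1(H)\supseteq\langle a_1,a_2,a_3\rangle=G$, while $a_2^2$, $a_3^2$ and $a_1=(\id,a_3)$ give the same for $p_2$; self-similarity gives $p_i(H)\subseteq G$, so $p_1(H)=p_2(H)=G$. Because the projections commute with truncation, this surjectivity descends to each finite level $n$.

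For $N_{3,n-1}\times N_{3,n-1}\subseteq N_{1,n}$ I would use recurrence to localize conjugation to a single coordinate. Given $g\in G$, recurrence supplies $h=(h_1,g)\in H$ with $p_2(h)=g$, and then $h a_1 h^{-1}=(\id,\,g a_3 g^{-1})\in N_1$. Symmetrically, since $(a_3,\id)=a_2a_1a_2^{-1}\in N_1$, choosing $h'=(g,h_2)\in H$ yields $h'(a_3,\id)h'^{-1}=(g a_3 g^{-1},\id)\in N_1$. As $g$ ranges over $G$ the conjugates $g a_3 g^{-1}$ generate $N_3$, so restricting to $T_n$ places both $\{\id\}\times N_{3,n-1}$ and $N_{3,n-1}\times\{\id\}$ inside $N_{1,n}$, giving the inclusion. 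For the reverse inclusion $N_{1,n}\subseteq N_{3,n-1}\times N_{3,n-1}$ I would compute the coordinates of an arbitrary generator $g a_1 g^{-1}$ of $N_1$, writing $g=(g_1,g_2)\tau$ with $g_1,g_2\in G$ by self-similarity: the relation~\eqref{eq:relations} gives $g a_1 g^{-1}=(\id,\,g_2 a_3 g_2^{-1})$ when $\tau=\id$ and $g a_1 g^{-1}=(g_1 a_3 g_1^{-1},\,\id)$ when $\tau=\sigma$. In both cases each coordinate is a $G$-conjugate of $a_3$, hence lies in $N_3$; since $N_{3,n-1}\times N_{3,n-1}$ is a subgroup containing the image of every generator of $N_{1,n}$, the inclusion follows, and the two directions together give the equality.

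I expect the main obstacle to be not any individual calculation but making the recurrence step fully rigorous at every finite level: one must verify that the sections $g_1,g_2$ of an element of $G$ again lie in $G$ (which I would justify from the recursive shape of $a_1,a_2,a_3$, whose sections are among $\{\id,a_1,a_2,a_3\}$) and that $p_1,p_2$ remain surjective after truncation to level $n$, so that the single-coordinate conjugation trick producing $(\id,g a_3 g^{-1})$ and $(g a_3 g^{-1},\id)$ is available in $G_n$ rather than only in the limit group $G$.
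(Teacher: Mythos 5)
Your proof is correct and takes essentially the same approach as the paper: both rest on the wreath-product computation showing that every $G_n$-conjugate of $a_1\restr{T_n}=(\id,a_3\restr{T_{n-1}})$ has the form $(\id,v)$ or $(u,\id)$ with $u,v$ conjugates of $a_3\restr{T_{n-1}}$. The paper's proof is a two-line version of this that leaves the reverse inclusion implicit, whereas your recurrence argument (via $a_2^2=(a_1,a_1)$, $a_3^2=(a_2,a_2)$ and $a_2a_1a_2^{-1}=(a_3,\id)$) makes explicit why every element of $N_{3,n-1}\times N_{3,n-1}$ actually arises as a product of such conjugates, so your write-up is, if anything, more complete.
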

\begin{proof}
$N_{1,n}$ is generated by the conjugates of $a_1\restr{T_n}=(\id,a_3\restr{T_{n-1}})$ by the elements of $G_n$. One computes the conjugates of $a_1$ as $(u,\id)$ or $(\id,v)$ where $u,v$ are conjugates of $a_3\restr{T_{n-1}}$. 
\end{proof}

\begin{lemma}\label{lem:N23}
For $i=2,3$, we have 
 \[ N_{i} \cap (W\times W) \subset (N_{i-1} \times N_{i-1})\cdot\langle (a_i,a_i^{-1})\rangle \text{ and }   N_{i} \cap (\{\id \} \times W)  \subset (\{\id\} \times N_{i-1}). \]
\end{lemma}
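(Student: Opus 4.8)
The plan is to build, for each $i\in\{2,3\}$, a single homomorphism that turns $a_i$ into a pure swap and thereby forces an anti-diagonal structure on $N_i$. Write $A:=G/N_{i-1}$. I would first record two facts that must be obtained \emph{independently} of Proposition~\ref{G/Ninfinite} to avoid circularity. Namely: (i) $A$ is procyclic and hence abelian --- since $a_1a_2a_3=\id$ (Lemma~\ref{a-rel}) the group $G$ is topologically generated by any two of $a_1,a_2,a_3$, and as $a_{i-1}\in N_{i-1}$ the relation forces $A=\langle\overline{a_i}\rangle$; and (ii) $G$ is self-similar in the strong sense that every section of an element of $G$ again lies in $G$ (true for the generators, whose sections lie in $\{\id,a_1,a_2,a_3\}$, and stable under products). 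Composing the wreath recursion $G\hookrightarrow G\wr S_2$ with the reduction $G\to A$ on each section then yields a continuous homomorphism
\[
\Psi\colon G\longrightarrow A\wr S_2,\qquad g=(g^{(1)},g^{(2)})\tau\ \longmapsto\ (\overline{g^{(1)}},\overline{g^{(2)}})\,\tau .
\]

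The key observation is that $\Psi(a_i)$ collapses to a pure swap. From $a_2=(\id,a_1)\sigma$ and $a_3=(a_2,\id)\sigma$, the unique nontrivial section of $a_i$ is precisely $a_{i-1}$, which lies in $N_{i-1}$; hence $\Psi(a_i)=(\overline{\id},\overline{\id})\sigma=:\hat\sigma$. Next I would compute the image of a typical generator of $N_i$: for $g\in G$ with $\Psi(g)=(a,b)\tau$, the wreath multiplication rule \eqref{eq:relations} gives, uniformly in $\tau$,
\[
\Psi(g\,a_i^{\pm1}\,g^{-1})=\Psi(g)\,\hat\sigma\,\Psi(g)^{-1}=\bigl(ab^{-1},\,ba^{-1}\bigr)\sigma=\bigl(ab^{-1},(ab^{-1})^{-1}\bigr)\sigma .
\]
Thus every conjugate of $a_i^{\pm1}$ lands in the anti-diagonal set
\[
H:=\{(z,z^{-1})\,\tau : z\in A,\ \tau\in S_2\}\ \le\ A\wr S_2 ,
\]
which is a \emph{subgroup} precisely because $A$ is abelian (this is the one place abelianness is needed). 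Since $N_i$ is the closed normal closure of $a_i$ and $\Psi$ is continuous with $H$ closed, it follows that $\Psi(N_i)\subseteq H$.

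It then remains only to read off the two inclusions. If $w\in N_i\cap(W\times W)$, strong self-similarity lets me write $w=(x,y)$ with $x,y\in G$; since $w$ fixes the first level, $\Psi(w)=(\overline{x},\overline{y})$ has trivial top component, so $\Psi(w)\in H\cap(A\times A)=\{(z,z^{-1})\}$ forces $\overline{y}=\overline{x}^{-1}$. Writing $\overline{x}=\overline{a_i}^{\,k}$ (possible as $A=\langle\overline{a_i}\rangle$) yields $xa_i^{-k}\in N_{i-1}$ and $ya_i^{k}\in N_{i-1}$, whence
\[
w=(x,y)=(n_1,n_2)\,(a_i,a_i^{-1})^{k}\in (N_{i-1}\times N_{i-1})\cdot\langle(a_i,a_i^{-1})\rangle ,
\]
giving the first inclusion. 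For the second, if in addition $x=\id$ then $\overline{x}=\overline{\id}$, so $\overline{y}=\overline{\id}$, i.e. $y\in N_{i-1}$ and $w\in\{\id\}\times N_{i-1}$.

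I expect the main obstacle to be conceptual rather than computational: the crux is realizing that reducing the sections modulo $N_{i-1}$ is exactly what makes $a_i$ into the pure swap $\hat\sigma$, after which the entire normal closure is squeezed into the anti-diagonal subgroup $H$. The two genuinely load-bearing points to justify with care are the well-definedness of $\Psi$ as a homomorphism (resting on the strong self-similarity of $G$, and to be argued \emph{before} invoking Proposition~\ref{G/Ninfinite}) and the cyclicity/abelianness of $A=G/N_{i-1}$, which the relation $a_1a_2a_3=\id$ supplies directly. Once these are secured, the conjugation identity above and the extraction of both inclusions are routine.
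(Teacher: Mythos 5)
Your proof is correct and takes essentially the same route as the paper's (which follows Pink's Lemma 2.2.4(2)): your homomorphism $\Psi$ is exactly reduction of the paper's $\tilde{G}=(G\times G)\rtimes\langle\sigma\rangle$ modulo $N_{i-1}\times N_{i-1}$, and your anti-diagonal subgroup $H$ is the image of the paper's $\tilde{H}=(N_{i-1}\times N_{i-1})\cdot\langle(a_i,a_i^{-1})\rangle\cdot\langle a_i\rangle$, so both arguments rest on the identical inputs of self-similarity and procyclicity of $G/N_{i-1}$ coming from $a_1a_2a_3=\id$. The only point to watch, shared with the paper's own proof, is that exponents such as $\overline{a_i}^{\,k}$ must be allowed to range over $\ZZ_2$ rather than $\ZZ$, i.e.\ $\langle(a_i,a_i^{-1})\rangle$ and $N_i$ are to be read as topologically generated/closed, which is the interpretation the paper's conventions already require.
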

\begin{proof}

We  prove it for $i=2$. It is similar for $i=3$. We follow the proof of \cite[Lemma~2.2.4(2)]{Pinkpolyn}.  Let $H=(N_{1} \times N_{1})\cdot\langle (a_2,a_2^{-1})\rangle$ and $\tilde{G}=(G\times G) \rtimes \langle \sigma \rangle$. 
First of all, $H$ is a normal subgroup of $\tilde{G}$ with an abelian quotient. We see this as follows: 

First of all, $N_{1} \times N_{1}$ is a normal subgroup of $\tilde{G}$ and the following homomorphism
 \[ (\langle a_2\rangle \times \langle a_2\rangle) \rtimes \langle \sigma\rangle \to \tilde{G}/(N_{1} \times N_{1}) \]
 is a surjection. Hence the abelian group $(\langle a_2\rangle \times \langle a_2\rangle) \rtimes \langle \sigma\rangle / \langle (a_2,{a_2}^{-1}) \rangle$ surjects onto $\tilde{G}/H$. 

Let $\tilde{H}:=H\cdot\langle a_2 \rangle$. Note that the quotient $\tilde{G}/H$ is abelian and $\tilde{H}$ contains $H$, hence $\tilde{H}$ is also a normal subgroup of $\tilde{G}$. Hence $N_{2}$ is contained in $\tilde{H}$.  We also see that $a_2$ is not in $H$ but ${a_2}^2=(a_1,a_1)$ is in $H$. Hence $N_{2} \cap (W\times W)$ is a subgroup of $H=\tilde{H} \cap (W\times W)$.

If $(u,v) \in N_{i} \cap (\{\id \} \times W)$, then $u=u'a_2^k$ and $v=v'a_2^{-k}$ by the first statement and $u=\id$ implies that $a_2^{k}$ is in $N_1$. Hence $v$ is in $N_1$.
\end{proof}

\begin{lemma}\label{lem:Gn1}
For any $n\geq 2$, we have $G_n \cap (W_{n-1}\times W_{n-1}) \simeq (N_{3,n-1}\times N_{3,n-1})\cdot \langle (a_1,a_1)\rangle$.
\end{lemma}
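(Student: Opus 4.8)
The plan is to recognize $G_n\cap(W_{n-1}\times W_{n-1})$ as the stabilizer of the first level inside $G_n$ and then to produce an explicit generating set for it. Let $\varepsilon\colon G_n\to S_2$ be the homomorphism recording the action on level $1$. By definition $a_1\restr{T_n}\mapsto\id$ while $a_2\restr{T_n},a_3\restr{T_n}\mapsto\sigma$, so $\varepsilon$ is surjective and its kernel $G_n\cap(W_{n-1}\times W_{n-1})$ has index $2$. Since $a_1a_2a_3=\id$ by Lemma~\ref{a-rel} we have $a_3=a_2^{-1}a_1^{-1}$ and hence $G_n=\langle a_1\restr{T_n},a_2\restr{T_n}\rangle$. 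Applying the Schreier lemma with transversal $\{\id,a_2\restr{T_n}\}$ then exhibits this kernel as being generated by the three elements $a_1\restr{T_n}$, $(a_2a_1a_2^{-1})\restr{T_n}$ and $a_2^2\restr{T_n}$.

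The next step is to evaluate these generators in wreath coordinates using the relation \ref{eq:relations}. One has $a_1\restr{T_n}=(\id,a_3)$ by definition, $a_2^2\restr{T_n}=(a_1,a_1)$ as recorded in Lemma~\ref{lem:compG}, and a short computation (first $a_2^{-1}=(a_1^{-1},\id)\sigma$, then $a_2a_1=(a_3,a_1)\sigma$) gives $(a_2a_1a_2^{-1})\restr{T_n}=(a_3,\id)$. Thus the level-$1$ stabilizer equals $\langle(\id,a_3),(a_3,\id),(a_1,a_1)\rangle$, all entries understood as restrictions to $T_{n-1}$.

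Now I read off the structure. The elements $(\id,a_3)=a_1\restr{T_n}$ and $(a_3,\id)=(a_2a_1a_2^{-1})\restr{T_n}$ are, respectively, $a_1$ and a conjugate of $a_1$, so both lie in the normal closure $N_{1,n}$; conversely all of $N_{1,n}$ lies in $\ker\varepsilon$, because $\varepsilon(a_1\restr{T_n})=\id$ forces the image under $\varepsilon$ of the normal closure of $a_1$ to be trivial. Hence $G_n\cap(W_{n-1}\times W_{n-1})=N_{1,n}\cdot\langle(a_1,a_1)\rangle$, and since $N_{1,n}$ is normal in $G_n$ this product is a genuine subgroup. Finally Lemma~\ref{lem:N1} identifies $N_{1,n}=N_{3,n-1}\times N_{3,n-1}$, which yields the asserted description $(N_{3,n-1}\times N_{3,n-1})\cdot\langle(a_1,a_1)\rangle$.

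I expect the only delicate point to be the wreath-coordinate bookkeeping in verifying $(a_2a_1a_2^{-1})\restr{T_n}=(a_3,\id)$, where one must apply \ref{eq:relations} carefully and keep track of how $\sigma$ permutes the two factors; once this is in hand, the result follows formally from the Schreier generating set together with the already-established Lemma~\ref{lem:compG} and Lemma~\ref{lem:N1}. It is also worth confirming explicitly that $\langle a_1,a_2\rangle=G_n$, so that the Schreier computation is legitimate, but this is immediate from $a_1a_2a_3=\id$.
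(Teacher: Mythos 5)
Your proposal is correct and follows essentially the same route as the paper: the paper's (very terse) proof likewise rests on the decomposition $G_n=N_{1,n}\cdot\langle a_2\restr{T_n}\rangle$ together with Lemma~\ref{lem:N1}, so that the level-one stabilizer is $N_{1,n}\cdot\langle a_2^2\restr{T_n}\rangle=N_{1,n}\cdot\langle (a_1,a_1)\rangle$. Your Schreier-lemma computation (with the verified identities $a_2a_1a_2^{-1}=(a_3,\id)$ and $a_2^2=(a_1,a_1)$) is simply an explicit justification of that same intermediate fact.
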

\begin{proof}
Follows from Lemma~\ref{lem:N1} and the fact that $G_n \simeq N_{1,n}\cdot\langle a_2\restr{T_n}\rangle$.
\end{proof}

\begin{remark}
Our computations in Magma shows that $|G_n:N_{1,n}|=2^{\floor*{\frac{n+1}{2}}}$ and $|G_n:N_{3,n}|=2^{\floor*{\frac{n}{2}}}$. Moreover Proposition~\ref{G/Ninfinite} supports these calculations.
Using the first formula, we have
 \begin{align*}
 \log_2 |G_n|&=\log_2|N_{1,n}| + \floor*{\frac{n+1}{2}}.
 \end{align*} 
 By Lemma~\ref{lem:N1}, $\log_2|N_{1,n}|=2\log_2 | N_{3,n-1}|$ and using the second formula, we find
  \begin{align*}
\log_2 | N_{3,n-1}|&=\log_2|G_{n-1}| - \floor*{\frac{n-1}{2}}.
 \end{align*} 
 Combining these equations, we conjecture that

\begin{conj}\label{conj:G}  For any $n\geq 1$, we have 
 \begin{align*}
 \log_2 | G_{n}|&=2\log_2|G_{n-1}| - \floor*{\frac{n-3}{2}}.
 \end{align*} 
\end{conj}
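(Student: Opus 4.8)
The plan is to reduce the conjecture to a single index formula and then to prove that formula by an inductive analysis of the subgroups $N_{3,n}$. Throughout set $e_{i,n}:=\log_2|G_n:N_{i,n}|$; since $G$ is a pro-$2$ group and each $G_n/N_{i,n}$ is cyclic (Proposition~\ref{G/Ninfinite}), each $e_{i,n}$ is a nonnegative integer, and $\ord(a_j\restr{T_n}\bmod N_{i,n})=2^{e_{i,n}}$ for any $j\neq i$ because $G_n=N_{i,n}\cdot\langle a_j\restr{T_n}\rangle$.

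First I would carry out the reduction. For $n\geq 2$ the element $a_2\restr{T_n}$ acts nontrivially on the first level, so $G_n\cap(W_{n-1}\times W_{n-1})$ has index $2$ in $G_n$ and $\log_2|G_n|=1+\log_2|G_n\cap(W_{n-1}\times W_{n-1})|$. By Lemma~\ref{lem:Gn1} this intersection is $(N_{3,n-1}\times N_{3,n-1})\cdot\langle(a_1,a_1)\rangle$, and since $(a_1,a_1)^m\in N_{3,n-1}\times N_{3,n-1}$ exactly when $a_1^m\restr{T_{n-1}}\in N_{3,n-1}$, i.e. when $2^{e_{3,n-1}}\mid m$, its order is $|N_{3,n-1}|^2\cdot 2^{e_{3,n-1}}$. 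Substituting $\log_2|N_{3,n-1}|=\log_2|G_{n-1}|-e_{3,n-1}$ and simplifying gives
\[
\log_2|G_n|=2\log_2|G_{n-1}|-e_{3,n-1}+1 .
\]
Comparing with the conjectured identity (and checking the base case $|G_1|=2$ separately), the conjecture is equivalent to the single statement $e_{3,n}=\floor*{\frac{n}{2}}$ for all $n\geq 1$, that is, to one of the two experimentally observed formulas $|G_n:N_{3,n}|=2^{\floor*{n/2}}$.

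To pin down $e_{3,n}$ I would prove two recursions. The first, $e_{1,n}=e_{3,n-1}+1$, is clean and uses only the proven Lemma~\ref{lem:N1}: from $a_2^2=(a_1,a_1)$ we get $a_2^{2m}\restr{T_n}=(a_1^m,a_1^m)$, while odd powers of $a_2$ swap the first level and so cannot lie in $N_{1,n}\subseteq W_{n-1}\times W_{n-1}$; since $N_{1,n}=N_{3,n-1}\times N_{3,n-1}$ we have $a_2^{2m}\in N_{1,n}$ iff $a_1^m\restr{T_{n-1}}\in N_{3,n-1}$ iff $2^{e_{3,n-1}}\mid m$, so $\ord(a_2\restr{T_n}\bmod N_{1,n})=2^{e_{3,n-1}+1}$. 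The second recursion I would aim for is $e_{3,n}=e_{1,n-1}$; granting it together with the first, substitution yields $e_{3,n}=e_{1,n-1}=e_{3,n-2}+1$, which with $e_{3,1}=0$ and $e_{3,2}=1$ solves to $e_{3,n}=\floor*{n/2}$ and closes the argument.

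The hard part is the second recursion $e_{3,n}=e_{1,n-1}$, and everything reduces to computing $N_{3,n}\cap(\{\id\}\times W_{n-1})$ exactly: writing $a_1^m\restr{T_n}=(\id,a_3^m\restr{T_{n-1}})$ and using $G_n=N_{3,n}\cdot\langle a_1\restr{T_n}\rangle$, one has $e_{3,n}=\log_2\ord(a_1\restr{T_n}\bmod N_{3,n})$, so I must determine precisely for which $m$ the element $(\id,a_3^m)$ lies in $N_{3,n}$. Lemma~\ref{lem:N23} only gives the inclusion $N_{3,n}\cap(\{\id\}\times W_{n-1})\subseteq\{\id\}\times N_{2,n-1}$, and this inclusion is genuinely strict: already at $n=2$ one has $a_3\restr{T_1}=\sigma\in N_{2,1}$ yet $(\id,\sigma)\notin N_{3,2}$, which is exactly why Proposition~\ref{G/Ninfinite}(2) is a strict inequality for even $n$ and why the naive recursion $e_{3,n}=e_{2,n-1}$ fails. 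Quantifying this defect — proving the matching upper bound $(\id,a_3^{2^{e_{1,n-1}}})\in N_{3,n}$ and the lower bound $(\id,a_3^{2^{e_{1,n-1}-1}})\notin N_{3,n}$ by tracking conjugates of $a_3$ through the self-similar structure — is the main obstacle, and is presumably why the statement is recorded here as a conjecture: the inequalities of Proposition~\ref{G/Ninfinite} by themselves chain only to $e_{3,n}\geq e_{3,n-3}+1$, forcing growth of order $n/3$ and falling short of the required $n/2$.
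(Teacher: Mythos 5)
You have not proved the statement, and you say so yourself: everything is funneled into the unproven ``second recursion'' $e_{3,n}=e_{1,n-1}$, and that step is precisely where the whole content of the conjecture lies. What you do establish is correct. The identity $\log_2|G_n|=2\log_2|G_{n-1}|-e_{3,n-1}+1$ does follow from Lemma~\ref{lem:Gn1} together with the cyclicity of $G_{n-1}/N_{3,n-1}$ (Proposition~\ref{G/Ninfinite}), so Conjecture~\ref{conj:G} is indeed equivalent to the single index formula $|G_n:N_{3,n}|=2^{\floor*{\frac{n}{2}}}$; and your argument for $e_{1,n}=e_{3,n-1}+1$ from Lemma~\ref{lem:N1} is sound, upgrading item (4) of Proposition~\ref{G/Ninfinite} from an inequality to an equality. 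But to get $e_{3,n}=e_{1,n-1}$ you would need to determine $N_{3,n}\cap(\{\id\}\times W_{n-1})$ exactly, and, as you correctly note, Lemma~\ref{lem:N23} gives only a one-sided inclusion that is in general strict (your $n=2$ check is right: $(\id,\sigma)\notin N_{3,2}$ even though $\sigma\in N_{2,1}$). So the proposal is a correct reduction plus an accurate diagnosis of the obstruction, not a proof.

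For calibration: the paper does not prove this statement either --- it is recorded as a conjecture precisely because the authors could not close the same gap. Their support consists of Magma computations (the table of $\log_2|G_n|$ for $n\leq 12$), the two experimentally observed index formulas $|G_n:N_{1,n}|=2^{\floor*{\frac{n+1}{2}}}$ and $|G_n:N_{3,n}|=2^{\floor*{\frac{n}{2}}}$, from which the remark preceding Conjecture~\ref{conj:G} derives the recursion by essentially your computation, the partial inequalities of Proposition~\ref{G/Ninfinite} (which, as you observe, chain only to growth of order $n/3$ rather than the required $n/2$), and the conditional result of Theorem~\ref{thm:cyc} that the conjectured value is an upper bound under an extra hypothesis on the kernels $U_n$. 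Your write-up is in effect a rigorous and slightly sharpened version of that remark: isolating $e_{3,n}=\floor*{\frac{n}{2}}$ as equivalent to the conjecture is genuine progress on the problem, but the missing step is exactly the open part, so the statement remains unproved.
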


 For small values of $n$, we can list the order of $G_n$ as follows.
 
 \begin{equation}\label{table:orderGn}
\begin{array}{ ||c|c|c|c|c|c|c|c|c|c|c|c|c||| } 
 \hline 
 n & 1 &2 & 3&4 &5 & 6&7& 8 &9 & 10& 11&12  \\ 
 \hline \hline
\log_2|G_n| & 1 & 3 & 6 & 12 & 23& 45 &88 & 174& 345 & 687 & 1370 & 2736 \\ 
 \hline
\end{array}
\end{equation}
\end{remark}

\begin{corollary}
 Assume the Conjecture~\ref{conj:G} holds. For every $n\geq 1$, we have $\log_2|G_n|=e_n$ where
 \[ e_n=2^{n}-1-\sum_{m=0}^{n-1}2^{n-1-m}\floor*{\frac{m}{2}}. \] 
\end{corollary}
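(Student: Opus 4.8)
The plan is to recognize that the closed form $e_n$ is nothing more than the unique solution of the recurrence supplied by Conjecture~\ref{conj:G}, and to prove the identity $\log_2|G_n| = e_n$ by a direct induction. Writing $L_n := \log_2|G_n|$, Conjecture~\ref{conj:G} asserts
\[ L_n = 2L_{n-1} - \floor*{\frac{n-3}{2}} \qquad (n \geq 1), \]
and the convention $G_0 = \{\id\}$ gives the initial value $L_0 = 0$ (equivalently one may anchor at $L_1 = 1$). Since this first-order linear recurrence together with its initial value determines the sequence $(L_n)$ uniquely, it suffices to verify that the proposed formula $e_n$ satisfies the same recurrence and the same initial condition; the equality $L_n = e_n$ then follows by induction on $n$.

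For the initial condition, I would note that the sum defining $e_0$ is empty, so $e_0 = 2^0 - 1 = 0 = L_0$. The substance of the argument is the recurrence step, for which I would compute $e_n - 2e_{n-1}$ directly. Doubling the defining expression for $e_{n-1}$ gives
\[ 2e_{n-1} = 2^n - 2 - \sum_{m=0}^{n-2} 2^{n-1-m}\floor*{\frac{m}{2}}, \]
while splitting the top term $m = n-1$ off the sum in $e_n$ yields
\[ e_n = 2^n - 1 - \sum_{m=0}^{n-2} 2^{n-1-m}\floor*{\frac{m}{2}} - \floor*{\frac{n-1}{2}}. \]
Subtracting, the two weighted sums cancel term by term, leaving $e_n - 2e_{n-1} = 1 - \floor*{\frac{n-1}{2}}$.

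It then remains to match this against the recurrence for $L_n$, i.e.\ to check that $1 - \floor*{\frac{n-1}{2}} = -\floor*{\frac{n-3}{2}}$, or equivalently $\floor*{\frac{n-1}{2}} - \floor*{\frac{n-3}{2}} = 1$. This is immediate, since the arguments $\frac{n-1}{2}$ and $\frac{n-3}{2}$ differ by exactly $1$ and $\floor*{x+1} = \floor*{x} + 1$ for every real $x$. Hence $e_n = 2e_{n-1} - \floor*{\frac{n-3}{2}}$, which is precisely the recurrence for $L_n$, and the induction closes.

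I do not anticipate a genuine obstacle here: the entire content is the bookkeeping of the index shift in the weighted sum and the elementary floor identity $\floor*{\frac{n-1}{2}} = \floor*{\frac{n-3}{2}} + 1$. The only point requiring a little care is using the empty-sum convention consistently at the base of the induction (and, if one prefers to begin at $n = 1$ as in the statement, verifying $e_1 = 1$ by hand), so that the induction is anchored correctly; as a sanity check one can confirm $e_2 = 3$, $e_3 = 6$, $e_4 = 12$, and $e_5 = 23$ against the values in~\eqref{table:orderGn}.
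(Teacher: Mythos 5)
Your proposal is correct and takes essentially the same route as the paper: the paper's entire proof is the one-line remark that ``one can check'' the closed form satisfies $e_n=2e_{n-1}-\floor*{\frac{n-3}{2}}$, and your argument simply carries out that check (via the index-shift cancellation and the identity $\floor*{\frac{n-1}{2}}=\floor*{\frac{n-3}{2}}+1$) together with the anchoring of the induction, which the paper leaves implicit.
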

\begin{proof}
One can check that the formula $e_n=2e_{n-1} - \floor*{\frac{n-3}{2}}$ holds. 
\end{proof}

\begin{corollary} 
Assume the Conjecture~\ref{conj:G} holds. The Hausdorff dimension of $G$ is $\frac{2}{3}$.
\end{corollary}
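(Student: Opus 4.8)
The plan is to evaluate the limit defining the Hausdorff dimension directly, feeding in the closed form for $\log_2|G_n|$ supplied by the preceding corollary. First I would pin down the denominator: $W_n=\Aut(T_n)$ is generated by the independent swaps available at each of the $1+2+\cdots+2^{n-1}=2^n-1$ internal vertices of $T_n$, so $|W_n|=2^{2^n-1}$ and hence $\log_2|W_n|=2^n-1$ (consistent with $|W_3|=2^7$ recorded earlier). Assuming Conjecture~\ref{conj:G}, the corollary gives $\log_2|G_n|=e_n=2^n-1-S_n$, where I set $S_n:=\sum_{m=0}^{n-1}2^{n-1-m}\floor*{\frac{m}{2}}$. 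Thus
\[
\frac{\log_2|G_n|}{\log_2|W_n|}=\frac{e_n}{2^n-1}=1-\frac{S_n}{2^n-1},
\]
and the whole problem reduces to computing $\lim_{n\to\infty}S_n/(2^n-1)$.

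The key step is to recognize $S_n/2^n=\tfrac12\sum_{m=0}^{n-1}2^{-m}\floor*{\frac{m}{2}}$ as the truncation of an absolutely convergent series, so that $\lim_{n\to\infty}S_n/2^n=\tfrac12\sum_{m\ge 0}\floor*{\frac{m}{2}}2^{-m}$. To evaluate the full series I would use the generating function identity, obtained by grouping the consecutive pairs $x^{2k}$ and $x^{2k+1}$ (both carrying coefficient $k$):
\[
\sum_{m\ge 0}\floor*{\frac{m}{2}}x^m=(1+x)\sum_{k\ge 0}k\,x^{2k}=(1+x)\frac{x^2}{(1-x^2)^2}.
\]
Evaluating at $x=\tfrac12$ yields $(3/2)\cdot\frac{1/4}{(3/4)^2}=\tfrac23$, so $\lim_{n\to\infty}S_n/2^n=\tfrac13$. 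Since $2^n-1\sim 2^n$, the same limit holds with the denominator $2^n-1$. Substituting back gives the Hausdorff dimension $1-\tfrac13=\tfrac23$.

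Candidly, this statement carries no serious obstacle once Conjecture~\ref{conj:G} (hence the corollary's formula $e_n$) is granted: the argument is an elementary geometric-series computation. The only points deserving care are the two limiting reductions—that the tail $\sum_{m\ge n}\floor*{\frac{m}{2}}2^{-m}$ is negligible so the truncated sum converges to the full series, and that replacing the denominator $2^n-1$ by $2^n$ does not change the limit—both of which are immediate from absolute convergence and $2^n-1\sim 2^n$. I would therefore present the proof as a short direct calculation rather than structuring it around a main difficulty.
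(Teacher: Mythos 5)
Your proof is correct, but it takes a genuinely different route from the paper's. The paper does not compute the limit at all: it observes that, under Conjecture~\ref{conj:G}, the order $2^{e_n}$ of $G_n$ coincides with the order of the group appearing in Pink's Proposition~2.3.1 for the parameter $r=2$, and then cites Pink's Theorem~2.3.2, which gives the Hausdorff dimension of such a group as $1-\frac{1}{2^r-1}$; evaluating at $r=2$ yields $\frac{2}{3}$. You instead work directly from the closed form $e_n=2^n-1-\sum_{m=0}^{n-1}2^{n-1-m}\lfloor m/2\rfloor$ of the preceding corollary and evaluate the defining limit, using $\log_2|W_n|=2^n-1$ and the generating-function identity $\sum_{m\ge 0}\lfloor m/2\rfloor x^m=(1+x)\,x^2/(1-x^2)^2$ at $x=\frac12$, which gives $\sum_{m\ge 0}\lfloor m/2\rfloor 2^{-m}=\frac{2}{3}$, hence $S_n/2^n\to\frac13$ and dimension $1-\frac13=\frac23$. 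Each of your steps checks out, including the two limiting reductions you flag. What the two approaches buy: yours is self-contained and elementary, needing nothing beyond the size formula already granted, and it verifies the value independently of Pink's work; the paper's is a two-line citation, but it relies on the reader matching the conjectural size formula with Pink's group and importing his dimension computation, a correspondence your argument makes unnecessary.
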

\begin{proof}
The group given in \cite[Proposition 2.3.1]{Pinkpolyn} for $r=2$ and the order of the group $G$ (conjecturally) are both $2^{e_n}$. In \cite[Theorem 2.3.2]{Pinkpolyn}, the Hausdorff dimension of such a group is calculated as $1-\frac{1}{2^r-1}$. We evaluate this for the value $r=2$. 
\end{proof}

\section{A Conjectural Characterization of $G$}

Fix $n$ and let $j=\floor*{\frac{n+1}{2}}$. Theorem~\ref{unity} implies that we have a homomorphism 
 \begin{equation}\label{eq:cyclotomic}
     \chi_n: G^{\text{arith}}_{n}=\Gal(K_n/k(t)) \to (\ZZ/2^j\ZZ)^*
 \end{equation}
given by the following property: for $\sigma \in G_n$, $\sigma(\zeta)=\zeta^{\chi(\sigma)}$ where $\zeta$ is a primitive $2^i$th root of unity. 

We observe that $G_{n}$ is a subgroup of the kernel of $\chi_n$.  
Let $w$ be an automorphism in $\Gal(K_{\infty}/K)$, hence it acts on the $2^n$th roots of unity for any $n\geq 1$. Remember that earlier we identified $G^{\text{arith}}$ with its image in $W$. Hence we denote the image of $w$ under the embedding $\Gal(K_{\infty}/K) \into W$ also by $w$.

In Proposition~\ref{kernelzeta}, we  describe the elements in the kernel of $\chi_n$ (which is naturally a subgroup of $\Gal(K_{\infty}/K)$) as elements $W_n$. 
We first introduce a new function.
 \begin{definition}
 Let $w \in W$ and let $\beta$ be a vertex of $T$. Define
 \begin{equation}
  \\Par(w,\beta)=\begin{cases} 0 & \text{ if }  w(\beta_1)=w(\beta)_1 \\
    1 & \text{ otherwise } 
    \end{cases}
 \end{equation}
 \end{definition}

\begin{proposition}\label{kernelzeta}
Let $w \in \ga$ such that $w\restr{T_{n-1}}=\id$. Then 
$ w(\zeta_{2^j})=\zeta_{2^j} $
if and only if 
\begin{equation}\label{eq:zeta}
\sum_{s_i \in \{1,2\}} \Par(w, t_{2s_11s_21\ldots s_{j-2}1s_{j-1}}) \equiv \sum_{s_i \in \{1,2\}} \Par(w, t_{1s_11s_21\ldots s_{j-2}1s_{j-1}}) \pmod{2}.
\end{equation}
\end{proposition}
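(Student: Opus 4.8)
The plan is to combine the explicit product formula for $\zeta_{2^j}$ from Theorem~\ref{unity} with the elementary relation $\alpha_1-1=-(\alpha_2-1)$, reducing the claim to a combinatorial count of the vertices occurring at the top level of that formula. Write $\zeta:=\zeta_{2^j}$. I would first dispose of the case $n$ even: then $j=n/2$ and, since the square-root construction raises the top level by $2$ at each step, every vertex occurring in $\zeta$ lies at level $\le 2j-1=n-1$; hence $w\restr{T_{n-1}}=\id$ forces $w(\zeta)=\zeta$, while every vertex $t_{1s_1\cdots}$, $t_{2s_1\cdots}$ in \eqref{eq:zeta} lies at level $2j-2=n-2$ and so is fixed together with its children, making both $\Par$-sums vanish; thus both sides hold vacuously. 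So the content is the case $n=2j-1$, which I assume from now on. By Theorem~\ref{unity} we may write $\zeta=\prod_\beta(\beta-1)^{e_\beta}$ with $e_\beta\in\{\pm1\}$ over a finite set of vertices; since $w$ fixes $1\in k$ we get $w(\zeta)/\zeta=\prod_\beta\big((w(\beta)-1)/(\beta-1)\big)^{e_\beta}$.

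The combinatorial heart, and the step I expect to be the main obstacle, is to pin down exactly the vertices occurring at the top level $n$. I would prove by induction on $j$, following the square-root step of Theorem~\ref{unity} (which by Lemma~\ref{lemma4e} replaces each factor attached to $\beta=t_W$ by one built from $t_{W1},t_{W11},t_{W21}$), that the level-$n$ vertices of $\zeta_{2^j}$ are exactly
\[ \{\, t_{c\,s_1 1 s_2 1\cdots s_{j-1}1}:c,s_i\in\{1,2\}\,\}, \]
each occurring once and each being the first child of its parent $t_{c\,s_1 1\cdots s_{j-1}}$ at level $n-1$, with these parents pairwise distinct and each carrying exactly one occurring child. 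The base case $j=2$ is read off from \eqref{zeta4}. For the inductive step one checks that only the previous top vertices $t_W$ reach the new top level, each producing precisely $t_{W11},t_{W21}$; since a top word ends in $11$ or $21$ it has a unique source, so no collisions or cancellations arise. Reindexing shows the parent set is $\{t_{1s_1 1\cdots s_{j-1}}\}\cup\{t_{2s_1 1\cdots s_{j-1}}\}$, exactly the vertices in \eqref{eq:zeta}.

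It then remains to compute the sign, which is formal. For $\beta$ at level $<n$ we have $w(\beta)=\beta$, so those factors are trivial. For $\beta=\gamma_s$ at level $n$, its parent $\gamma$ at level $n-1$ is fixed by $w$, so $w(\beta)=\gamma_{s'}$ with $s'=s$ or $3-s$ according as $\Par(w,\gamma)=0$ or $1$; the relation $\gamma_1-1=-(\gamma_2-1)$ then gives $w(\beta)-1=(-1)^{\Par(w,\gamma)}(\beta-1)$, and since $e_\beta$ is odd the corresponding factor of $w(\zeta)/\zeta$ equals $(-1)^{\Par(w,\gamma)}$ regardless of $e_\beta$. As each level-$n$ vertex is the single occurring child of a distinct parent, I conclude
\[ \frac{w(\zeta)}{\zeta}=(-1)^{\sum_\gamma\Par(w,\gamma)}, \]
the sum taken over the parent set above. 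Hence $w(\zeta)=\zeta$ if and only if $\sum_{s_i}\Par(w,t_{1s_1 1\cdots s_{j-1}})+\sum_{s_i}\Par(w,t_{2s_1 1\cdots s_{j-1}})\equiv0\pmod 2$, which is precisely the congruence \eqref{eq:zeta}.
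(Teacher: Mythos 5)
Your proof is correct and follows essentially the same route as the paper: both expand $\zeta_{2^j}$ via the product formula coming from Theorem~\ref{unity} and Lemma~\ref{lemma4e}, use the relation $\gamma_1-1=-(\gamma_2-1)$ to turn the action of $w$ on each top-level factor into the sign $(-1)^{\Par(w,\gamma)}$ attached to its (fixed) parent $\gamma$, and conclude by the parity count over the parent vertices appearing in \eqref{eq:zeta}. The only differences are presentational: you prove the combinatorial structure of the top-level vertices by induction and dispose of the even-$n$ case separately, whereas the paper reads both off directly from its explicit formula for $\zeta_{2^j}$.
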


Before we prove the proposition, we first illustrate the nodes given in ~\eqref{eq:zeta} for $n=5$ and $j=3$ as follows:
\[
		\begin{forest}
			my tree
			[$t$, fill, circle, scale=0.1, name=t
			[$t_2$, fill,circle, scale=0.1, name=t2
			[$t_{22}$ , fill, circle, scale=0.1, draw, name=t22
			[$t_{222}$, fill, circle, scale=0.1, draw
			[$t_{v2}$, fill, circle, scale=0.1, draw
			[$t_{v22}$, fill, circle, scale=0.1, draw]
			[$t_{v21}$, fill, circle, scale=0.1, draw]
			]
			[$t_{v1}$, fill, circle, scale=0.1, draw
			[$t_{v12}$, fill, circle, scale=0.1, draw]
			[$t_{v11}$, fill, circle, scale=0.1, draw]
			]]
			[\color{blue}$t_{221}$, fill, circle, scale=0.1, draw
			[$t_{s2}$, fill=red, circle, scale=0.2, draw
			[$t_{s22}$, fill, circle, scale=0.1, draw, name=s22]
			[\color{purple}$t_{s21}$, fill, circle, scale=0.1, draw, name=s21]
			]
			[\color{red}$t_{s1}$, fill=red, circle, scale=0.2, draw
			[$t_{s12}$, fill, circle, scale=0.1, draw]
			[\color{purple}$t_{s11}$, fill, circle, scale=0.1, draw]
			]
			]
			]
			[$t_{21}$,  fill, circle, scale=0.1, draw, name=t21
			[$t_{212}$, fill, circle, scale=0.1, draw, name=t212
			[$t_{n2}$, fill, circle, scale=0.1, draw, name=tn2
			[$t_{n22}$, fill, circle, scale=0.1, draw, name=n22]
			[$t_{n21}$, fill, circle, scale=0.1, draw, name=n21]
			]
			[$t_{n1}$ , fill, circle, scale=0.1, draw, name=tn1
			[$t_{n12}$, fill, circle, scale=0.1, draw]
			[$t_{n11}$, fill, circle, scale=0.1, draw]
			]
			]
			[\color{blue}$t_{211}$ , fill, circle, scale=0.1, draw, name=t211
			[$t_{m2}$, fill=red, circle, scale=0.2, draw
			[$t_{m22}$, fill, circle, scale=0.1, draw, name=m22]
			[\color{purple}$t_{m21}$, fill, circle, scale=0.1, draw, name=m21]
			]
			[\color{red}$t_{m1}$, fill=red, circle, scale=0.2, draw
			[$t_{m12}$, fill, circle, scale=0.1, draw]
			[\color{purple}$t_{m11}$, fill, circle, scale=0.1, draw]
			]
			]]]
			[$t_1$, fill, circle, scale=0.1, name=t1
			[$t_{12}$,  fill, circle, scale=0.1, draw, name=t12
			[$t_{122}$, fill, circle, scale=0.1, draw, name=t122
			[$t_{\ell2}$ , fill, circle, scale=0.1, draw
			[$t_{\ell22}$, fill, circle, scale=0.1, draw]
			[$t_{\ell21}$, fill, circle, scale=0.1, draw]
			]
			[$t_{\ell1}$,  fill, circle, scale=0.1, draw
			[$t_{\ell12}$, fill, circle, scale=0.1, draw]
			[$t_{\ell11}$, fill, circle, scale=0.1, draw]
			]]
			[\color{blue}$t_{121}$, fill, circle, scale=0.1, draw, name=t121
			[$t_{k2}$ , fill=red, circle, scale=0.2, draw
			[$t_{k22}$, fill, circle, scale=0.1, draw]
			[\color{purple}$t_{k21}$, fill, circle, scale=0.1, draw]
			]
			[\color{red}$t_{k1}$ , fill=red, circle, scale=0.2, draw
			[$t_{k12}$, fill, circle, scale=0.1, draw]
			[\color{purple}$t_{k11}$, fill, circle, scale=0.1, draw]
			]
			]
			]
			[$t_{11}$, fill , circle, scale=0.1, draw, name=t11
			[$t_{112}$, fill, circle, scale=0.1, draw, name=t112
			[$t_{j2}$ , fill, circle, scale=0.1, draw
			[$t_{j22}$, fill, circle, scale=0.1, draw,name=buraya]
			[$t_{j21}$, fill, circle, scale=0.1, draw, name=burdan]
			]
			[$t_{j1}$ , fill , circle, scale=0.1, draw
			[$t_{j12}$ , fill, circle, scale=0.1, draw ]
			[$t_{j11}$ , fill, circle, scale=0.1, draw]
			]
			]
			[\color{blue}$t_{111}$, fill , circle, scale=0.1, draw, name=t111
			[$t_{i2}$, fill=red, circle, scale=0.2, draw
			[$t_{i22}$, fill, circle, scale=0.1, draw, name=where in]
			[\color{purple}$t_{i21}$, fill, circle, scale=0.1, draw, name=where out]
			]
			[\color{red}$t_{i1}$, fill=red , circle, scale=0.2, draw
			[$t_{i12}$, fill, circle, scale=0.1, draw]
			[\color{purple} $t_{i11}$, fill, circle, scale=0.1, draw]  		]
			]]]
			]	
	\end{forest} \]
	\begin{example}
	The automorphism $w_1$ constructed in the proof of Theorem~\ref{main} is identity on the first four levels of the tree. We check that 
	\[ \sum_{s_i \in \{1,2\}}\Par(w_1, t_{2s_11s_2})=0 \text{ and } \sum_{s_i \in \{1,2\}} \Par(w_1, t_{1s_11s_2})=1 \]
	By Proposition~\ref{kernelzeta}, $w_1$ is not in $G_n$.
	\end{example}
\begin{proof}
Applying Lemma~\ref{lemma4e} inductively and using the fact that $(-1)=\frac{t_1-1}{t_2-1}$ where $f(t_1)=f(t_2)=t$, we find 

 \[ \zeta_{2^j}=\prod_{s_i \in \{1,2\}}\frac{ {(t_{1s_11s_2\ldots s_{j-1}1}}-1)}{ {(t_{2s_11s_2\ldots s_{j-1}1}-1)}} u \quad \text{or} \quad  \zeta_{2^j}=\prod_{s_i \in \{1,2\}} \frac{{(t_{2s_11s_2\ldots s_{j-1}1}}-1)}{{(t_{1s_11s_2\ldots s_{j-1}1}-1)}} u\]
 where $u$ is a product of expressions involving the vertices of level at most $n-1$. For example, if $n=3$, then $j=2$ and we have 
 \[ \zeta_4=\frac{\prod_s(t_{2s1}-1)}{\prod_s (t_{1s1}-1)} \frac{(t_{21}-1)}{(t_{11}-1)}. \] Here we take $u=\frac{(t_{21}-1)}{(t_{11}-1)}$.

Let $\beta$ be a vertex of $T$ and let $w\in W$. Since $(\beta_1-1)=-(\beta_2-1)$, we have $$w(\beta_i-1)=(-1)^{\Par(w,\beta)}(w(\beta)_i-1)$$ for $i=1,2$. Using this property, we calculate that
 \begin{equation}
  w(\zeta_{2^j})=(-1)^{(\sum \\Par(w, t_{2s_11s_2\ldots s_{j-1}}) -\sum \Par(w, t_{1s_11s_2\ldots s_{j-1}}))}  \frac{\prod {
  (w(t_{1s_11s_2\ldots s_{j-1}})_1-1)}}{\prod {(w(t_{2s_11s_2\ldots s_{j-1}})_1-1)}} w(u).             
 \end{equation}
 Since $u$ is an expression of vertices of level at most $n-1$, $w(u)=u$. If $n$ is even, then $n=2j$ and if $n$ is odd, $n=2j-1$. Hence in both cases, $w(t_{2s_11s_2\ldots 1s_{j-1}})=t_{2s_11s_2\ldots 1s_{j-1}}$ and $w(t_{2s_11s_2\ldots 1s_{j-1}})=t_{1s_11s_2\ldots 1s_{j-1}}$ and the statement follows.
 
\end{proof}
%We illustrate the action of $w$ described in the proof above as follows: In this case the level $n=5$ and $j=3$.  We look at the action of $w$ on the two leaves lying above certain nodes; in the below picture these nodes are marked with red. 

%Similary for $n=7$ and $j=5$ for any red leaf above, we consider the two purple leaves as below:
%\begin{forest}
	%my tree
%	[$t$, fill=red, circle, scale=0.3, name=t
%[$t_2$, fill,circle, scale=0.1, name=t2
%	[$t_22$, fill,circle, scale=0.1, name=t22 ]
%	[$t_21$, fill,circle, scale=0.1, name=t21]
%	]
%	[$t_1$, fill,circle, scale=0.1, name=t1
%	[\color{purple}$t_12$, fill=purple,circle, scale=0.2, name=t12 ]
%	[\color{purple}$t_11$, fill=purple,circle, scale=0.2, name=t11]
%	]
%	]
%	\end{forest}

 \begin{remark}
 When $n$ is even, $j=\frac{n}{2}$ and $t_{1s_11s_2\ldots 1s_{j-2}1s_{j-1}}$ and $t_{2s_11s_21\ldots s_{j-2}1s_{j-1}}$ are vertices of level $2(j-1)=n-2$. Hence the condition given in Proposition~\ref{kernelzeta} is automatically satisfied. 
 \end{remark}

\begin{example}
Let $n=3$, then $j=2$. Let $w \in G^{\text{arith}}_3$ such that $w\restr {T_{2}}=\id$. The calculation above implies that $w(\zeta_4)=\zeta_4$ if and only if $\Par(w,t_{11})+\Par(w,t_{12})=\Par(w,t_{21})+\Par(w,t_{22})$. This equation implies that $\sgn_3(w)=1$, hence $w$ is in $G_3$ as we proved in Proposition~\ref{G-E isom. n=3}.
\end{example}

Using Proposition~\ref{kernelzeta}, we show that the quantity described in Conjecture~\ref{conj:G} is the maximum value that the group $G_n$ can attain. In particular, we prove that $G_n$ is completely determined by the relation in \ref{eq:zeta}.

  \begin{theorem}\label{thm:cyc}
Assume Conjecture~\ref{conj:G} holds. Then 
\begin{enumerate}
\item The quotient group $\ga/G$ is isomorphic to a finite index subgroup of $\ZZ_2^*$. 
\item The homomorphism $\Gal(\bar{k}/k) \to \ZZ_2^*$ is given by the cyclotomic character.
\end{enumerate}
 In particular the field $F$ in \eqref{eq:exact} is $k(\{\zeta_{2^j} : j\geq 1\})$.
\end{theorem}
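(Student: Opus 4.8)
The plan is to deduce all three assertions from the single statement that the cyclotomic character $\chi\colon \ga \to \ZZ_2^*$, obtained as the inverse limit of the maps $\chi_n$ of \eqref{eq:cyclotomic}, has kernel exactly $G$. Granting this, $\chi$ descends to an injection $\ga/G \hookrightarrow \ZZ_2^*$ whose image is $\Gal(k(\{\zeta_{2^j}:j\ge 1\})/k)$; since $k$ is a number field, this image is the open subgroup of $\ZZ_2^*=\Gal(\QQ(\{\zeta_{2^j}\})/\QQ)$ fixing $k\cap \QQ(\{\zeta_{2^j}\})$, hence of finite index, giving (1). The composite $\Gal(\bar k/k)\to \ga/G \hookrightarrow \ZZ_2^*$ is by construction the map recording the action on the $2$-power roots of unity, i.e.\ the cyclotomic character, giving (2). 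Finally, comparing with the bottom row of \eqref{eq:exact}, in which $\ga/G \cong \Gal(F/k)$, the identification of $\ga/G$ with $\Gal(k(\{\zeta_{2^j}\})/k)$ forces $F=k(\{\zeta_{2^j}\})$, the concluding statement. Theorem~\ref{unity} already supplies the inclusion $k(\{\zeta_{2^j}\})\subseteq F$, so all the content lies in the reverse direction, namely in proving $\ker\chi = G$.

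Since $G_n\subseteq \ker\chi_n$ for every $n$ (as noted right after \eqref{eq:cyclotomic}), and since $G$ and $\ker\chi$ are the inverse limits of their finite levels, it suffices to show $\ker\chi_n = G_n$ for all $n$. I would argue by induction on $n$, the cases $n\le 3$ being covered by Propositions~\ref{G-E isom. n=2} and \ref{G-E isom. n=3} together with the $n=3$ example following Proposition~\ref{kernelzeta}. For the inductive step, $\pi_{n,n-1}$ carries $\ker\chi_n$ into $\ker\chi_{n-1}=G_{n-1}$ and is surjective onto it (because $G_n\subseteq \ker\chi_n$ already surjects onto $G_{n-1}$); hence $\ker\chi_n = G_n$ is equivalent to the equality of the level-$n$ kernels $\ker\chi_n\cap M_n = G_n\cap M_n$, where $M_n:=\ker(\ga_n\to\ga_{n-1})$ consists of the automorphisms of $\ga_n$ that act trivially on $T_{n-1}$. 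Such an element is a product of transpositions of sibling leaves at level $n$, and Proposition~\ref{kernelzeta} expresses the condition $w\in\ker\chi_n$ as the single $\FF_2$-linear parity relation \eqref{eq:zeta}, which is vacuous when $n$ is even by the Remark following that proposition.

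The remaining and decisive point is to show that this parity relation cuts $\ker\chi_n\cap M_n$ down to exactly $G_n\cap M_n$; equivalently, that the $2$-power roots of unity are the only new constants appearing at each level, so that no element of $\ga_n$ can be trivial on $T_{n-1}$, fix every $\zeta_{2^j}$, and yet fail to be geometric. This is where Conjecture~\ref{conj:G} enters: the recursive analysis of the subgroups $N_{i,n}$ in Section~\ref{furtherG} together with Proposition~\ref{kernelzeta} should show that $2^{e_n}$ (with $e_n=\log_2|G_n|$ as predicted by the conjecture) is an upper bound for the order of any subgroup of $\ga_n$ that projects onto $G_{n-1}$ and fixes every $\zeta_{2^j}$; since $\ker\chi_n$ is such a subgroup, $|\ker\chi_n|\le 2^{e_n}$. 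Granting the conjecture, $|G_n|=2^{e_n}$, and the chain $|G_n|\le|\ker\chi_n|\le 2^{e_n}=|G_n|$ collapses, yielding $\ker\chi_n=G_n$. I expect the main obstacle to be precisely this upper bound $|\ker\chi_n|\le 2^{e_n}$ obtained without a priori knowledge of $|\ga_n|$: one must propagate the lone parity constraint through the wreath recursion of Section~\ref{furtherG} and verify that it reproduces the exponent drop $\floor*{\frac{n-3}{2}}$ in the recursion $e_n=2e_{n-1}-\floor*{\frac{n-3}{2}}$. The discriminant computation of Proposition~\ref{discriminant}, which shows that only the primes $t$, $t-1$, and $2$ intervene, furnishes the complementary arithmetic input ensuring that the constant field is ramified only at $2$ and therefore cannot exceed $k(\{\zeta_{2^j}\})$.
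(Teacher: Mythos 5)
Your proposal is correct in outline and follows essentially the same route as the paper: both reduce the theorem to showing that, under Conjecture~\ref{conj:G}, $G$ is precisely the kernel of the action on the $2$-power roots of unity, and both extract this from Proposition~\ref{kernelzeta} by treating \eqref{eq:zeta} as a parity homomorphism whose kernel reproduces the exponent drop $\floor*{\frac{n-3}{2}}$, so that the conjectured order forces saturation. The only difference is bookkeeping: the paper runs the count on the geometric kernels $U_n=\ker(G_n\to G_{n-1})$ inside $U_{n-1}\times U_{n-1}$ (using the explicit elements $v_n$ to show the parity map is onto exactly at odd levels) and then concludes that attaining the bound means $G_n$ is the full fixer of the roots of unity, whereas you run the sandwich $|G_n|\le|\ker\chi_n|\le 2^{e_n}$ directly on $\ker\chi_n$ --- and the upper bound you defer as the ``main obstacle'' is exactly the step the paper settles by its explicit maximality assumption on $U_n$.
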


\begin{proof}    
 For any $n\geq 1$, $G_n$ is a subgroup of $\ker(\chi_n)$. %In order to find an upper bound for the order of $G_n$, we will assume that the only restriction on $G_n$ is coming from $\ker(\chi)$.            
Let $U_n$ denote the kernel of the natural projection $G_n \to G_{n-1}$.  We know by the self-similarity of $G_n$ that $U_n$ is a subgroup of $U_{n-1}\times U_{n-1}$. By Proposition~\ref{kernelzeta}, it satisfies the relation \ref{eq:zeta}. We assume that for any $n\geq 2$, $U_n$ consists of the elements of $U_{n-1}\times U_{n-1}$ satisfying ~\ref{eq:zeta}. We use this to find an upper bound for the order of $G_n$.

With this assumption, we first claim that for any $n\geq 2$, 
\[
 | (U_{n-1}\times U_{n-1}) :U_n |=\begin{cases} 1 & \text{ if } n \text{ is even } \\ 2 & \text{ if } n \text{ is odd } \end{cases}
\]
Note that ~\ref{eq:zeta} defines a homomorphism from $U_{n-1} \times U_{n-1} \to \ZZ/2\ZZ$ since $U_n$ is abelian for any $n\geq 2$ and by our assumption on $U_n$, the kernel of this map is exactly $U_n$. Hence the aforementioned index is either one or two. Next, we construct a sequence of elements $v_n \in W_n$ as follows: 

Let $v_1=\sigma$ and for any $n\geq 2$, we define
\[ v_n=\begin{cases} (v_{n-1},v_{n-1}) & \text{ if } n \text{ is odd } \\ 
           (v_{n-1},\id) & \text{ if } n \text{ is even } \end{cases}
\]
The automorphisms $v_n$ satisfy the relation given in ~\ref{eq:zeta}. Hence $v_n$ is in $U_n$ for any $n\geq 1$. Also $(v_{n-1},\id)$ is an element of $U_{n-1}\times U_{n-1}$ but not in $U_n$ when $n$ is odd. This proves that the index in the claim is two when $n$ is odd.
%By Proposition~\ref{G-E isom. n=2} and Proposition~\ref{G-E isom. n=3}, we know that $v_n$ is an element of $G_n$ for any $n\leq 3$. By our assumption on $U_n$, $v_n$ is in $U_n$ for any $n\geq 3$ and $(v_{n-1},\id)$ is an element of $U_{n-1}\times U_{n-1}$ but not in $U_n$ when $n$ is odd. This proves the claim.

Let $e_n=\log_2 |G_n|$ and let $s_n=\log_2|U_n|$. We have $e_n=e_{n-1}+s_n$. By our assumption, we have
\[ s_n-2s_{n-1}=\begin{cases} 0 & \text{ if } n \text{ is even } \\
-1 & \text{ otherwise. }
\end{cases}
\]
We calculate first that $s_n-2s_{n-1}=e_n-e_{n-1}-2e_{n-1}+2e_{n-2}$. By induction, assume $e_{m}=2e_{m-1} - \floor*{\frac{m-3}{2}}$ for all $m\leq n-1$. Then  
\[ s_n-2s_{n-1}=e_n-2e_{n-1} -(e_{n-1}-2e_{n-2})=e_n-2e_{n-1}+ \floor*{\frac{n-4}{2}} \]

We obtain 
\[ e_n-2e_{n-1} =\begin{cases} -( \frac{n-4}{2} ) & \text{ if } n \text{ is even } \\
-(\frac{n-3}{2}) & \text{ otherwise. }
\end{cases}
\]
Hence  $e_n=2e_{n-1} - \floor*{\frac{n-3}{2}}$. This shows that the order of $G_n$ is at most the quantity described in Conjecture~\ref{conj:G} and it is achieved when $G_n$ is exactly the set of automorphisms in $G^{\text{arith}}_n$ fixing the roots of unity. Hence the statement follows.
\end{proof}

\bibliographystyle{plain}
\bibliography{refmon.bib}
\end{document}